%% file: writeup-v2.tex
\documentclass[11pt]{article}

\usepackage{sn-preamble}

\newcommand{\diam}{\operatorname{diam}}

\renewcommand{\S}{\mathbb{S}}
\newcommand{\dG}{\mathrm{dist}_{\mathrm{G}}}

\newcommand{\Conv}{\mathrm{Conv}}
\newcommand{\dtv}{\mathrm{d}_{\mathrm{TV}}}
\newcommand{\dkl}{\mathrm{d}_{\mathrm{KL}}}
\newcommand{\newalpha}{\alpha(r,\eps)}
\newcommand{\OPT}{\mathrm{OPT}}

\title{Optimal Sparsification of Gaussian Processes}
\author{%
	Shivam Nadimpalli \\
	\emph{Massachusetts Institute of Technology} \\
	\href{mailto:shivamn@mit.edu}{\texttt{shivamn@mit.edu}}
}
\date{\today}

\begin{document}

\pagenumbering{gobble}

\maketitle 

\begin{abstract}
We prove an optimal dimension-free sparsification theorem for suprema of centered Gaussian processes. 
Given a bounded set $T\sse\R^n$, we show that the supremum of the canonical Gaussian process on $T$ can be $L^2$-approximated by the supremum of a shifted subprocess indexed by only $\exp(O(1/\eps^2))$ points, with error at most $\eps$ times the Gaussian width of $T$. 
In particular, the size of the approximating process is independent of both the ambient dimension and the cardinality of the original index set. 

This improves a recent sparsification theorem of De, Nadimpalli, O'Donnell, and Servedio (2026) by an exponential factor, and we show that the dependence on $\eps$ is tight up to constants in the exponent. 
As consequences, we obtain an exponentially improved junta theorem for norms over Gaussian space and sharpen results on learning, property testing, and polyhedral approximation of convex sets under the Gaussian measure. The proof is based on an interpolation argument that combines Sudakov's minoration with the Brascamp--Lieb inequality. 
\end{abstract}

\newpage 
\setcounter{page}{1}
\pagenumbering{arabic}

\input{sections/introduction}
\input{sections/preliminaries}
\input{sections/improved-sparsification}
\input{sections/applications}
\input{sections/lower-bounds}

\section*{Acknowledgements} 
 
S.N.~is partially supported by Elchanan Mossel's Vannevar Bush Faculty Fellowship ONR-N00014-20-1-2826.
The author is grateful to Anindya De, Larry Guth, Amit Rajaraman, and Rocco Servedio for encouragement, and to Jun-Ting (Tim) Hsieh for suggesting \Cref{fig:hexagon-shift}. 

\section*{AI Disclosure} 

We used large language models (LLMs) to assist with editing, drafting TikZ code for \Cref{fig:hexagon-shift}, literature search, and manuscript preparation.  
In particular, GPT~5.4~and~5.5 pointed us to the Zamir--Feder entropy inequality, used in \Cref{lem:zamir-feder-projection-entropy}; to \cite[Problem~3.19(d)]{rvh-notes}, used in the proof of \Cref{lemma:LSI-strongly-log-concave}; and answered background questions about differential equations.
All mathematical claims, proofs, and references were checked by the author.  

\bibliography{allrefs}
\bibliographystyle{alphaurl}

\appendix

\input{sections/technical-lemmas}
\end{document}

%% file: sections/introduction.tex
\section{Introduction}
\label{sec:intro}

The study of suprema of Gaussian processes has a long history in probability theory, with influential applications across modern analysis, geometry, and theoretical computer science~\cite{rvh-notes,Nelson16,vershynin2018high,talagrand2022upper}. 
Our main result, \Cref{thm:GP-sparsification}, is an optimal dimension-free sparsification theorem for these random variables. 
We begin by recalling the background on Gaussian processes that motivates our result. 

\paragraph{Suprema of Gaussian Processes.} 

A Gaussian process is a collection $\{\bX_t\}_{t\in T}$ of (possibly infinitely many) random variables, indexed by a set $T$, such that every finite subcollection is jointly Gaussian.  
The object of interest in this paper is the supremum of such a process, namely the random variable $\sup_{t\in T} \bX_t$. 
These random variables arise naturally in high-dimensional settings, where one often needs uniform control over large families of correlated random quantities. 
Moreover, many familiar mathematical objects can themselves be expressed as suprema, including support functions of convex bodies, empirical risk minimization objectives, and operator norms of random matrices~\cite[Section~1.2.2]{rvh-notes}. 

A central achievement of the theory is an essentially complete understanding of the expected supremum of a Gaussian process in terms of the \emph{geometry} of its index set. 
To explain this connection, recall that every centered Gaussian process can be represented, without loss of generality, as a \emph{canonical} Gaussian process~\cite[Section~7.1.2]{vershynin2018high}. 
In finite dimensions, this means a process of the form 
\begin{equation} \label{eq:canonical-GP}
	\cbra{\bX_t := \bg \cdot t}_{t\in T}~\text{where}~T\sse\R^n~\text{and}~\bg\sim N(0, I_n)\,.
\end{equation}
Here $N(0, I_n)$ denotes the $n$-dimensional standard Gaussian distribution.  
In this representation, the covariance structure is encoded by the Euclidean geometry of $T$: for $s,t\in T$, we have $\Ex[\bX_s\bX_t] = s\cdot t$. 
Thus the geometry of $T$ governs the behavior of the process. 
Classical results such as Dudley's entropy bound, Sudakov's minoration, and the Fernique--Talagrand theory of majorizing measures show that this geometric viewpoint is essentially complete at the level of the expected supremum: the quantity $\E[\sup_{t\in T} \bX_t]$ is characterized, up to universal constant factors, by the (multi-scale) geometry of the index set $T$~\cite{rvh-notes,talagrand2022upper}. 

We focus exclusively on processes of the form \eqref{eq:canonical-GP}. 
For such a process indexed by a set $T\sse \R^n$, the expected supremum is called the \emph{Gaussian width} of $T$~\cite[Chapter~7]{vershynin2018high}, and we write
\[
	w(T) := \Ex \sbra{\sup_{t\in T} \bg\cdot t}
\]
for this quantity. 
The Gaussian width $w(T)$ is both the natural scale of the process and a fundamental complexity measure of the index set, and all of our approximation guarantees are measured relative to it. 

\paragraph{Sparsification of Gaussian Processes.} 

A natural question is whether this geometric viewpoint can be upgraded from the level of expectation to the level of the random variable itself. 
Namely, can $\sup_{t\in T}\bX_t$ be approximated (say in $L^1$ or $L^2$; see \Cref{subsec:prelims-gaussian-rvs} for definitions), with error measured at scale $w(T)$, by a process indexed by far fewer points? 
A dimension-free sparsification theorem of this form was obtained by De, Nadimpalli, O'Donnell, and Servedio~\cite[Theorem~1]{de2026sparsifying}, who proved that for every bounded $T\sse \R^n$ and every $\varepsilon>0$, there is a subset $S\sse T$ and a collection of non-negative \emph{shifts} $\{c_s\geq 0\}_{s\in S}$ such that 
\[
    |S| \leq \exp\exp\pbra{O\pbra{\frac{1}{\eps}}}
    \qquad\text{and}\qquad 
    \left\|
    \sup_{t\in T} \bg\cdot t
    -
    \max_{s\in S} \cbra{\bg\cdot s + c_s}
    \right\|_{L^1}
    \leq
    \eps\cdot w(T)\,.
\] 

Thus the supremum of the original process can be $L^1$-approximated, at scale $w(T)$, by the supremum of a shifted subprocess whose size is independent of both the ambient dimension $n$ and the cardinality of $T$. 
Moreover, by Markov's inequality, the $L^1$ guarantee immediately yields a high-probability approximation: for every $\eta>0$, 
\[
	\Prx_{\bg\sim N(0, I_n)}\left[
	\left|
    \sup_{t\in T} \bg\cdot t
    -
    \max_{s\in S} \cbra{\bg\cdot s + c_s}
	\right|
	>
	\frac{\eps\cdot w(T)}{\eta}
	\right]
	\leq \eta\,.
\]
In particular, to approximate $\sup_{t\in T}\bg\cdot t$ to additive error $\delta$ with probability at least $1-\eta$, it suffices to choose $\eps \asymp \eta\cdot\delta/w(T)$, and so the size of the sparsified process depends only on $w(T)$, $\delta$, and $\eta$, and not on the ambient dimension $n$ or on the cardinality of $T$. 
This reinforces the role of Gaussian width as a basic complexity measure of the index set: beyond setting the natural scale of the process, it governs how large a shifted subprocess is needed to approximate the supremum as a random variable. 

We note that the shifts $\{c_s\}$ are necessary if one requires $S\sse T$ while keeping $|S|$ independent of the ambient dimension; see \cite[Example~32]{de2026sparsifying}. 
On the other hand, if one drops the requirement that the approximating index set be a subset of $T$, then with some additional work one can obtain a centered approximator to the original supremum on a new index set~\cite[Corollary~23]{de2026sparsifying}.  

The proof of \cite{de2026sparsifying} constructs a multi-scale partition of the index set $T$ using Talagrand's celebrated majorizing measures theorem~\cite{talagrand2022upper}. 
The same work also proves a lower bound of $\exp\big(\widetilde{\Omega}(1/\sqrt{\varepsilon})\big)$ on the size of any such approximator; see \cite[Corollary~35]{de2026sparsifying}. 
This leaves open the quantitative question of whether the double-exponential dependence on $\eps^{-1}$ is inherent, or merely an artifact of the multi-scale construction. 

\paragraph{This Work: Optimal Bounds.} 

Our main result answers this question in the negative. 
For every bounded index set $T\sse \R^n$, we construct a shifted subprocess indexed by at most $\exp(O(1/\eps^2))$ points whose supremum approximates the original supremum in $L^2$ to error at most $\eps\cdot w(T)$. 
This improves the bound of \cite{de2026sparsifying} from double-exponential to single-exponential, while also strengthening the approximation guarantee from $L^1$ to $L^2$. 

\begin{restatable}{theorem}{gpsparsificationthm}
\label{thm:GP-sparsification}
	Suppose $T \sse \R^n$ is bounded and let $\eps > 0$. 
	There exists $S \sse T$ and non-negative constants $\{c_s \geq 0\}_{s\in S}$ such that 
	\[
		|S| \leq \exp\pbra{O\pbra{\frac{1}{\eps^2}}}
		\qquad\text{and}\qquad 
		\vabs{
			\sup_{t\in T}\bg\cdot t - \max_{s \in S} \cbra{\bg\cdot s + c_s} 
		}_{L^2} 
		\leq 
		\eps\cdot w(T)\,. 
	\]
\end{restatable}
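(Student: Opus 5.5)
We will build $S$ by a greedy, single-scale selection and control the error with an interpolation argument. Normalize so that $w(T)=1$. The key quantity is the \emph{defect}
\[
\Delta(S,c) := \E\Big[\sup_{t\in T}\bg\cdot t - \max_{s\in S}\{\bg\cdot s + c_s\}\Big].
\]
The first step is to arrange the shifts so that the approximator is pointwise dominated by the true supremum. The natural choice is to take $c_s \le 0$ after a global recentring; equivalently, we may take $c_s\ge 0$ and subtract a common constant. The aim is then to reduce the $L^2$ error to a bound on $\Delta$ plus a fluctuation term.

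For the fluctuation term we use Gaussian concentration. The difference $F(\bg)=\sup_T - \max_S$ is a difference of two Lipschitz functions. A naive bound on its Lipschitz constant only gives $\mathrm{diam}(T)$, which is not small, so we will instead bound $\mathrm{Var}(F)$ through the Brascamp--Lieb (Poincaré-type) inequality for a suitably tilted, log-concave measure. Concretely, we smooth both suprema into soft-max functions $\frac1\beta\log\int e^{\beta \bg\cdot t}\,d\mu(t)$ with $\beta\asymp 1/\eps$. Their gradients are barycenters of Gibbs measures on $T$ and on $S$. Then
\[
\|F\|_{L^2}^2 \lesssim \E\big|\nabla F\big|^2 + \Delta^2 + \text{smoothing error } O(\log|\cdot|/\beta),
\]
and $\E|\nabla F|^2$ is the expected squared distance between the two Gibbs barycenters.

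The construction step is where Sudakov's minoration enters. We pick $S$ to be a maximal $\eps$-separated subset of $T$. Sudakov gives $\eps\sqrt{\log|S|}\lesssim w(T)=1$, hence $|S|\le \exp(O(1/\eps^2))$; this is exactly the claimed size. We then choose the shifts $c_s$ as a free-energy correction: $c_s$ is roughly $\frac1\beta\log$ of the Gibbs mass of the Voronoi cell of $s$. With this choice, the soft-max over $S$ matches the soft-max over $T$ up to the discrepancy $\bg\cdot(t-s)$ with $|t-s|\le\eps$.

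To make this rigorous we interpolate. Set $t\mapsto t_\lambda = \lambda t + (1-\lambda)\pi(t)$, where $\pi$ is the nearest-point map into $S$, and differentiate the soft-max in $\lambda$. Gaussian integration by parts expresses the derivative as a covariance under the Gibbs measure of quantities of size $\bg\cdot(t-\pi(t))$. Each such quantity has variance at most $\eps^2$.

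The main obstacle is that the Voronoi cells can be high-dimensional. A union bound over cells would reintroduce dimension or double-exponential losses. So we must show, in a dimension-free way, that the conditional fluctuation $\sup_{t\in\text{cell}(s)}\bg\cdot(t-s)$ averages to $O(\eps)$ in $L^2$ under the Gibbs weights. We would first try to apply Brascamp--Lieb to the tilted Gaussian measure, which is strongly log-concave with curvature about $1$ after the soft-max tilt, in order to bound the Gibbs covariance by $\E|\nabla\text{cell displacement}|^2\le\eps^2$. We would then choose $\beta$ to balance the smoothing error $\log|S|/\beta\approx 1/(\eps^2\beta)$ against $\eps$. If this balance fails, we fall back on a two-scale version: $\eps$-nets at scales $\eps$ and $\sqrt\eps$. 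Either way the total error stays $O(\eps)$ while the size stays $\exp(O(1/\eps^2))$. Rescaling $\eps$ by a constant finishes the proof.
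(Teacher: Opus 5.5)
\textbf{There is a genuine gap.} You have the right ingredients: a Sudakov net at scale $\eps$, softmax smoothing, interpolation of each $t$ toward $\pi(t)$, and Brascamp--Lieb for the softmax-tilted Gaussian, which is $1$-strongly log-concave for every $\beta$. What is missing is the idea that makes these fit together, and the step you rely on instead fails.

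In your interpolation the shifts are absent or fixed in advance. The $\theta$-derivative of the softmax is then, up to a constant, $-\sum_t \omega_t(g)\,g\cdot u_t$, which is \emph{not centered}. Brascamp--Lieb controls only the variance of $\bg\cdot u_t$ under the tilted law $\mu_t$. The mean $\Ex_{\mu_t}\sbra{\bg\cdot u_t}$ can be of order $w(T)$.

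Concretely, take $T=\{\lambda e_i,2\lambda e_i\}$ with $w(T)=1$ and $S=\{\lambda e_i\}$. Already for the hard maximum, the unshifted path is $M_\theta=\lambda(\bM+(1-\theta)\bM_+)$. Its derivative $-\lambda\bM_+$ has $L^2$ norm $\asymp 1$.

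Your remedies do not close this.
\begin{itemize}
\item The free-energy shift, $\frac1\beta\log$ of the Gibbs mass of a cell, is a function of $g$, not a constant. Turning it into a constant requires averaging under the law on the region where $s$ is (softly) active. That region depends on all the shifts, which is exactly the circularity.
\item The Poincar\'e route $\|F\|_{L^2}^2\le \Ex|\nabla F|^2+\Delta^2$ is true. However, it needs the argmax of the shifted maximum over $S$ to track $\pi$ of the argmax over $T$, and it needs the mean defect $\Delta$ to be $O(\eps)$. Nothing in your construction supplies either.
\item Gaussian integration by parts only computes the derivative of the \emph{mean} of the softmax, not its $L^2$ norm.
\end{itemize}

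\textbf{How the paper resolves this.} The shifts are made time-dependent and solved for self-consistently: $a_t(0)=0$ and $a_t'(\theta)=\Ex_{\bg\sim\mu_t^{\theta,\bar a(\theta)}}\sbra{\bg\cdot u_t}$. This is a coupled ODE whose vector field is continuous and bounded (this is where Gaussian integration by parts is used), so Peano gives a global solution.

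Then $\frac{d}{d\theta}H_\theta(g)=-\sum_t\omega_t(g)\pbra{g\cdot u_t-m_t}$ is a mixture of residuals, each centered under its own tilted law. Jensen plus Brascamp--Lieb give $\|\frac{d}{d\theta}H_\theta\|_{L^2}^2\le\sum_t\rho_t\Varx_{\mu_t}\sbra{\bg\cdot u_t}\le\eps^2$.

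The final shifts are $c_s=\max_{t\in P_s}a_t(1)$. These are automatically $\geq 0$, because $u_s=0$ forces $a_s\equiv0$. Your suggested recentring to $c_s\le 0$ would violate the statement.

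This bound is independent of $\beta$, so there is nothing to balance: one takes $\beta\ge 2\log|T|/r$. Your choice $\beta\asymp 1/\eps$ would make the smoothing error $\log|S|/\beta\asymp 1/\eps$. The two-scale fallback at scales $\eps$ and $\sqrt\eps$ is neither needed nor justified.
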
 

We briefly highlight the main idea of the proof, deferring a detailed technical overview and comparison with \cite{de2026sparsifying} to \Cref{subsec:technical-overview}.   
We take $S\sse T$ to be an $O(\eps\cdot w(T))$-net of $T$, which by Sudakov's minoration (\Cref{lem:sudakov}) has size $\exp(O(1/\eps^2))$.  
The shifts $\{c_s\}_{s\in S}$ compensate for the fine-scale fluctuations lost when passing from $T$ to the net $S$.  
The correct shift for each net point turns out to depend on where that point is active in the maximum, but these ``active regions'' themselves depend on the shifts.  
We resolve this circularity by constructing the shifts dynamically along an interpolation from $T$ to $S$.  
This leads to a coupled system of ODEs, with the error controlled by the Brascamp--Lieb inequality. 

As in \cite{de2026sparsifying}, the shifts can be removed if one allows the centered approximating process to be indexed by a new set rather than by a subset of $T$; see \Cref{cor:centered-sparsification}. 
Our approach also yields $L^p$-error sparsifiers for $p \geq 2$; see \Cref{cor:lp}. 
Finally, we show that the dependence on $\eps$ in \Cref{thm:GP-sparsification} is optimal up to constants in the exponent, even if the shifts in the approximating process are allowed to be arbitrary real numbers. 

\begin{restatable}{theorem}{widthonegplbthm}
\label{thm:width-one-gp-lb}
	There is a universal constant $\eps_0>0$ such that for every
	$0<\eps<\eps_0$ there is an integer $n=n(\eps)$ and a bounded set
	$T_\eps\sse\R^n$ with $w(T_\eps)=1$ such that the following holds.  If
	$S\sse T_\eps$ is finite and $\cbra{c_s}_{s\in S}$ are real numbers satisfying
	\[
		\left\|
			\sup_{t\in T_\eps} \bg \cdot t
			-
			\max_{s\in S}\cbra{\bg\cdot s +c_s}
		\right\|_{L^2}
		\leq \eps\,,
	\]
	then $|S|\geq \exp\pbra{\Omega(1/\eps^2)}$.
\end{restatable}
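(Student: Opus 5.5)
The plan is to use a ``two-scale'' index set. The coarse scale is a large family of orthogonal directions that forces any good sparsifier to touch most of them. The fine scale is a small cluster attached to each direction, whose internal fluctuations are of order $\eps$ and which no shift can reproduce. Concretely, let $N=\exp(c/\eps^2)$ for a small constant $c$, and work in $\R^{2N}$ with orthonormal basis $\{e_i,f_i\}_{i\le N}$. Put
\[
	T_\eps := \bigcup_{i=1}^N \cbra{\delta e_i + \eps f_i,\ \delta e_i - \eps f_i}\,,
\]
so that $\sup_{t\in T_\eps}\bg\cdot t=\max_i\pbra{\delta g_i+\eps|h_i|}$, where $g_i=\bg\cdot e_i$ and $h_i=\bg\cdot f_i$. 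The parameter $\delta\asymp 1/\sqrt{2\log N}\asymp \eps/\sqrt{c}$ is tuned, after a final rescaling, so that $w(T_\eps)=1$.

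The first step is to note that each block contributes only through $\delta g_i+\eps|h_i|$, and that the law of $(g_i,h_i)_{i\le N}$ is exchangeable over blocks. Hence the block $\boldsymbol{i}^\ast$ achieving the maximum is uniform on $[N]$. Suppose $|S|\le N/4$. Then at least $3N/4$ blocks are \emph{untouched}, meaning that neither of their two points lies in $S$. So with probability at least $3/4$ the argmax block is untouched.

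The second step uses the fact that the approximant $\max_{s\in S}\{\bg\cdot s+c_s\}$ is a measurable function of the coordinates $(g_j,h_j)$ of touched blocks only. On the event that $\boldsymbol{i}^\ast$ is untouched, the true supremum equals $\delta g_{\boldsymbol{i}^\ast}+\eps|h_{\boldsymbol{i}^\ast}|$, and I want to show it retains conditional variance $\gtrsim\eps^2$ given the touched coordinates. The natural device is a resampling or coupling argument. Take an untouched block $i$ and replace $|h_i|$ by an independent copy $|h_i'|$. This leaves the approximant unchanged. It shifts the candidate value of block $i$ by $\eps(|h_i'|-|h_i|)$, which is of order $\eps$ with constant probability. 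Because the top order statistics of $\max_j \delta g_j$ are spaced at scale $\delta/\sqrt{\log N}\asymp\eps^2\ll\eps$, such a shift typically keeps block $i$ as the argmax, or hands the argmax to another untouched block. In either case the supremum moves by order $\eps$. Combining this with $\Var(X)=\tfrac12\E(X-X')^2$ for the resampled pair, applied conditionally, should give $\|\sup-\text{approx}\|_{L^2}^2\ge c'\eps^2$.

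Rescaling $\eps$ by a constant then shows that any approximant with error at most $\eps$ must have $|S|> N/4=\exp(\Omega(1/\eps^2))$. This argument also works for arbitrary real shifts $c_s$, since the shifts are irrelevant to the untouched-block argument.

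The main obstacle is the second step: making the ``order-$\eps$ movement of the supremum'' rigorous. The selection of the argmax is itself correlated with $h_i$, so conditioning is delicate. The fine-scale term $\eps|h_i|$ enters the competition among roughly $\sqrt{\log N}$ near-maximal blocks, and one must check that it genuinely moves the maximum rather than merely reshuffling which block wins at no cost. I would first try to handle this by restricting to a good event. On that event the maximum of $\delta g_j$ over untouched blocks exceeds everything else by at least $C\eps$, which has constant probability after adjusting $c$. On the other hand, among untouched blocks only $O(1)$ have $\delta g_j$ within $2\eps$ of the top. Then a direct second-moment computation over the $|h_j|$'s of these few blocks, with everything else frozen, should yield the conditional variance lower bound. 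If this turns out too fragile, the fallback is to enlarge each cluster to $\eps\cdot B^m_2$ for a fixed $m$. The fine-scale supremum $\eps\|h_i\|$ then has a smooth density, which makes the anti-concentration step cleaner.
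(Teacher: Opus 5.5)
\textbf{There is a genuine gap: the set you construct does not satisfy the theorem.} Your $T_\eps$ is superconcentrated. Let $\sigma^2:=\delta^2+\eps^2\asymp\eps^2/c$. The supremum $M=\max_{i\le N}(\delta g_i+\eps|h_i|)$ is a maximum of $N$ i.i.d.\ variables with Gaussian-type tails at scale $\sigma$. So it fluctuates only at the Gumbel scale $\sigma/\sqrt{\log N}$, and $\Var[M]\lesssim\sigma^2/\log N\asymp\eps^4/c^2$. One way to see this is Talagrand's $L^1$--$L^2$ inequality: each coordinate affects $M$ only on the probability-$1/N$ event that its block wins. Your approximant is measurable with respect to the touched coordinates $\mathcal F$. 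Hence the most Step 2 could ever give is $\Ex[\Var(M\mid\mathcal F)]\le\Var[M]=O(\eps^4)$, and the claimed conditional variance $\gtrsim\eps^2$ is false.

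Worse, this $T_\eps$ is easy to sparsify. Take $S=\{\delta e_i+\eps f_i:i\le K\}$ with one common shift chosen to match means. The approximant is a constant plus $\max_{i\le K}Y_i$ with $Y_i\sim N(0,\sigma^2)$ i.i.d., which has standard deviation $O(\sigma/\sqrt{\log K})$. The $L^2$ error is then $O(\eps^2/c+\eps/\sqrt{c\log K})\le\eps$ already for $K=\exp(O(1/c))$, independent of $\eps$. Your fallback with clusters $\eps B_2^m$ fails for the same reason.

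Your Step 2 heuristic breaks exactly where you noted that the order-statistic spacing is $\eps^2$. Three things go wrong:
\begin{itemize}
\item There are $\exp(\Theta(c/\eps))$ untouched blocks within $2\eps$ of the top. So your ``good event'' (a lead of $C\eps$ with only $O(1)$ competitors) has probability $\exp(-\Theta(c/\eps))$, not a constant.
\item Lowering the winning block by $\eps$ just passes the maximum to a competitor $O(\eps^2)$ below.
\item Resampling the coordinates of the data-dependent argmax block does not produce an exchangeable pair, so $\Var(X)=\tfrac12\Ex(X-X')^2$ does not apply.
\end{itemize}

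The missing idea is that the hard instance needs a supremum that fluctuates at scale $\eps$, with those fluctuations trackable only by a piece nearly aligned with $\bg$. The paper uses the single-scale sphere $\frac{1}{\sqrt n}\S^{n-1}$ with $n\asymp\eps^{-2}$. Its supremum $\|\bg\|_2/\sqrt n$ has radial fluctuations of order $1/\sqrt n\asymp\eps$. Along the ray in direction $\theta$, the approximant is convex piecewise linear in $r=\|\bg\|_2/\sqrt n$ with slopes $\theta\cdot u_j$. If all slopes are at most $9/10$, then $h_\theta(r)-r$ decreases at rate at least $1/10$, so its second moment is at least $\frac1{100}\Var(r)\gtrsim 1/n$. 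Hence half the sphere must lie in the caps $\{\theta\cdot u_j>9/10\}$. Each cap has measure $e^{-\Omega(n)}$, which forces $|S|\ge e^{\Omega(n)}=e^{\Omega(1/\eps^2)}$.
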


The same example also gives a matching lower bound for $L^1$-error approximators; see \Cref{rem:l1-sphere-lb}.  
Since the $L^2$ guarantee in \Cref{thm:GP-sparsification} is stronger than the corresponding $L^1$ guarantee, this yields the optimal $L^1$ sparsifier size $\exp(\Theta(1/\eps^2))$.  
Thus we close the quantitative gap left by \cite{de2026sparsifying}, between their $\exp(\widetilde\Omega(1/\sqrt\eps))$ lower bound and their double-exponential upper bound. 

\subsection{Applications of \Cref{thm:GP-sparsification}}
\label{subsec:apps}

We next record several geometric consequences of \Cref{thm:GP-sparsification}, including low-dimensional approximation of norms and polyhedral approximation of convex sets under the Gaussian measure. 
These results sharpen analogous consequences of \cite[Theorem~1]{de2026sparsifying}, improving their quantitative bounds by an exponential factor. 

\subsubsection{A Junta Theorem for Norms} 

We begin with a ``junta theorem'' for norms over Gaussian space.
As a motivating example, borrowed from \cite{de2026sparsifying}, consider the Euclidean norm $\psi(x)=\|x\|_2$ on $\R^n$. 
Standard Gaussian concentration bounds imply that $\psi$ can be multiplicatively approximated, within a $(1\pm\eps)$ factor on a $(1-\eps)$ fraction of Gaussian space, by a function of the form
\[
	\phi(x) := C_\eps \sqrt{n}\cdot \sup_{1\leq i \leq 2^{\Theta(1/\eps^2)}} |x_i|\,,
\]
for an appropriate normalization constant $C_\eps \asymp \eps$. 
Thus $\|x\|_2$ can be approximated over Gaussian space by a norm depending on only $2^{\Theta(1/\eps^2)}$ coordinates. 

Motivated by this example and by the notion of a ``junta'' in the analysis of Boolean functions (see, for example, \cite{DMN21} and the references therein), we call a function $\phi:\R^n\to\R$ a \emph{$k$-junta norm} if there is a subspace $E\sse\R^n$ with $\dim(E)\leq k$ and a norm $\nu:E\to\R$ such that 
\[
	\phi(x) = \nu(\pi_E x)\,,
\]
where $\pi_E : \R^n \to E$ is the orthogonal projection onto $E$. 

De, Nadimpalli, O'Donnell, and Servedio~\cite{de2026sparsifying} showed that this junta phenomenon extends, at least qualitatively, to \emph{every} norm on $\R^n$. 
More precisely, \cite[Theorem~2]{de2026sparsifying} proves that every norm on $\R^n$ admits a multiplicative $(1\pm\eps)$-approximation by a $k$-junta norm over Gaussian space, with $k \leq \exp\exp\pbra{O(1/\eps^3)}$. 
As a consequence of \Cref{thm:GP-sparsification}, we improve this dependence by an exponential factor, bringing the general theorem quantitatively closer to the $\ell_2$ example above. 

\begin{restatable}{theorem}{normjuntathm}
\label{thm:norm-junta}
	Let $\eps \in (0, 1/2)$ and suppose $\psi : \R^n \to \R$ is a norm. 
	Then there exists a $k$-junta norm $\phi: \R^n \to \R$ with 
	\[
		k\leq
		\exp\pbra{
			O\pbra{
				\frac{\log(1/\eps)}{\eps^4}
			}
		}
	\]	
	such that 
	\[
		\Prx_{\bg\sim N(0,I_n)}
		\sbra{
			1-\eps
			\leq
			\frac{\psi(\bg)}{\phi(\bg)}
			\leq
			1+\eps
		}
		\geq
		1-\eps\,.
	\]
\end{restatable}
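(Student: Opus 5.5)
We will derive the result from \Cref{thm:GP-sparsification} by writing the norm $\psi$ as the supremum of a canonical Gaussian process. The set $T=\{t:\psi^*(t)\le 1\}$ is the dual unit ball. It is compact, convex and symmetric, and $\psi(x)=\sup_{t\in T} x\cdot t$. Write $w:=w(T)=\E[\psi(\bg)]$.

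The first step is concentration. Since $\psi$ is $\sigma$-Lipschitz with $\sigma=\sup_{t\in T}\|t\|_2$, Gaussian concentration gives $\Pr[|\psi(\bg)-w|>u]\le 2e^{-u^2/(2\sigma^2)}$. We also have $w\ge \sigma\sqrt{2/\pi}$, because $\E|\bg\cdot t|=\|t\|_2\sqrt{2/\pi}$ and $T$ is symmetric. This alone does not make $\psi(\bg)\approx w$. What we need is anti-concentration near zero at scale $w$: $\Pr[\psi(\bg)<\eps w]\le\eps/3$. We expect this to hold for norms, by log-concavity, e.g.\ via a small-ball estimate such as Latała–Oleszkiewicz or the B-inequality. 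On the complementary event, an additive error of $\eps^2 w$ becomes a multiplicative error of $\eps$. This is the source of the $\eps^4$ in the exponent.

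The second step applies \Cref{thm:GP-sparsification} with accuracy $\delta$ to obtain $S\subseteq T$ and shifts $c_s\ge0$. Chebyshev's inequality then gives $|\psi(\bg)-\max_s\{\bg\cdot s+c_s\}|\le \delta w/\sqrt{\eta}$ except with probability $\eta$. The approximator $\max_s\{\bg\cdot s+c_s\}$ is not a norm, so we symmetrize and homogenize it. Let $E=\mathrm{span}(S)$, so $\dim E\le|S|$. Define $\phi(x)=\max_{s\in S}|x\cdot s|$, or more carefully $\nu(y)=\sup_{t\in \Conv(\pm S)} y\cdot t$ on $E$. This is a seminorm on $E$ depending only on $\pi_E x$; it is a genuine norm if $S$ spans $E$.

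The shifts cause a difficulty: $\max_s\{\bg\cdot s\}$ may fall short of $\psi$ by roughly $\max_s c_s$, which can be of order $\eps w$. The cleaner route is to replace $T$ by a sparsifier of the shift-free form guaranteed by \Cref{cor:centered-sparsification}, on a new index set $S'$ with $|S'|\le \exp(O(1/\delta^2))$. We would then take $\phi(x)=\sup_{t\in\Conv(S'\cup -S')} x\cdot t$, after checking that symmetrization costs little. The supremum over $-S'$ approximates $\psi(-\bg)=\psi(\bg)$ with the same $L^2$ error, so $\max(a,b)$ with both $a,b$ close to $\psi$ stays close. Positive homogeneity is then automatic. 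If the corollary's set is not already shift-free, we can use the standard trick of adding a coordinate: absorb $c_s$ as $c_s\, x_{n+1}$ and approximate $x_{n+1}$ by its typical value. This trick is what costs the extra $\log(1/\eps)$ factor.

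Finally, we set $\delta\asymp \eps^2\cdot\mathrm{poly}^{-1}(\log(1/\eps))$ and $\eta=\eps/3$. The multiplicative guarantee then holds with probability $1-\eps$, and $k\le\exp(O(\log(1/\eps)/\eps^4))$. The main obstacle is the small-ball step: showing $\Pr[\psi(\bg)\le \eps w]\lesssim\eps$ up to logarithmic losses, since the resulting additive error must be converted to a multiplicative one. If a clean anti-concentration bound is unavailable, we would instead use the Gaussian-measure small-ball estimate $\Pr[\psi(\bg)\le \lambda w]\le (C\lambda)^{c}$ for convex symmetric sets. We would then adjust $\delta$ by polylogarithmic factors, which the $\log(1/\eps)$ in the bound accommodates.
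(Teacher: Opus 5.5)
\textbf{The bound does not follow.} The skeleton is right: dual ball $T$, additive error of order $\eps^2 w$, and a small-ball estimate at scale $\eps w$ to convert additive to multiplicative error. But the step that should produce $\exp(O(\log(1/\eps)/\eps^4))$ fails. You use only an $L^2$ guarantee and Chebyshev. Off an event of probability $\eta$, this gives $|\psi(\bg)-\phi(\bg)|\le \delta w/\sqrt{\eta}$. The total failure probability must be at most $\eps$, so $\eta\lesssim\eps$. On $\{\psi(\bg)\ge c\eps w\}$ you need additive error $\lesssim \eps^2 w$, so $\delta w/\sqrt{\eta}\lesssim\eps^2 w$. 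Together these force $\delta\lesssim\eps^{5/2}$, and hence only $k\le\exp(O(1/\delta^2))=\exp(O(\eps^{-5}))$. With your choice $\delta\asymp\eps^2/\mathrm{polylog}(1/\eps)$ and $\eta=\eps/3$, the Chebyshev threshold is of order $\eps^{3/2}w/\mathrm{polylog}(1/\eps)$, far above $\eps^2 w$. You also misattribute the $\log(1/\eps)$ factor. The auxiliary-coordinate trick only multiplies the size by $\exp(O(1/\delta))$, which is absorbed, and small-ball losses do not contribute it.

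\textbf{The missing idea is $L^p$ sparsification with $p\asymp\log(1/\eps)$}, as in \Cref{cor:lp}. With $L^p$ error $a\eps^2 w$, Markov on $p$-th moments gives $\Pr[|\psi(\bg)-\phi(\bg)|>b\eps^2 w]\le (a/b)^p\le \eps/2$. This loses only a constant in the threshold, while the size is $\exp(O(p/\eps^4))=\exp(O(\log(1/\eps)/\eps^4))$. You cannot get this from \Cref{thm:GP-sparsification} as a black box. You must rerun the interpolation with Jensen at exponent $p$, and replace Brascamp--Lieb by the subgaussian moment bound $\|x\cdot u-\E_\mu[x\cdot u]\|_{L^p(\mu)}\le C\sqrt p\,\|u\|_2$ for $1$-strongly log-concave $\mu$ (\Cref{lemma:LSI-strongly-log-concave}). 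The net then has radius $\Theta(\eps^2 w/\sqrt p)$.

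\textbf{Two smaller points.} First, the small-ball bound must be linear: $\Pr[\psi(\bg)\le\lambda w]\le C\lambda$. This is \Cref{lem:DNOS-anti}, and it also follows from the Lata{\l}a--Oleszkiewicz S-inequality. Your fallback $(C\lambda)^c$ with $c<1$ would push the threshold down to $\eps^{1/c}w$. That costs polynomial, not polylogarithmic, factors in the exponent.

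Second, the centered sparsifier lives in $\R^{n+A}$. To get $\phi:\R^n\to\R$, you must place the auxiliary directions inside $\R^n$, orthogonal to $\mathrm{span}(S)$. This is possible once $n$ exceeds the target bound on $k$; otherwise take $\phi=\psi$. A single extra coordinate $c_s x_{n+1}$ would not suffice, because one Gaussian coordinate is not concentrated. You need $\max_{j\le A}|z_j|$ with $A$ large.
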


A direct substitution of \Cref{thm:GP-sparsification} into the argument of \cite[Theorem~2]{de2026sparsifying} gives $k \leq \exp(O(1/\eps^5))$. 
To obtain the sharper bound above, we use the $L^p$-sparsifier from \Cref{cor:lp}, which saves a factor of $\eps^{-1}$ in the exponent up to logarithmic factors. 
 
\subsubsection{Polyhedral Approximation of Convex Sets} 
\label{subsec:intro-apps-polytope}

For measurable sets $K,L\sse\R^n$, we write 
\[
	\dG(K,L) := \Prx_{\bg\sim N(0,I_n)}\sbra{\bg \in K\,\triangle\,L}
\]
for the \emph{Gaussian distance} between $K$ and $L$, where 
$K\,\triangle\,L = (K\setminus L)\cup(L\setminus K)$ denotes their symmetric difference. 
This distance is widely used in probability theory, statistics, and theoretical computer science, particularly in derandomization, learning theory, and property testing; see, for example, \cite{DNS23-polytope} and the references therein.

One of the main applications of \cite{de2026sparsifying} was a polyhedral approximation theorem for convex sets that are intersections of low-width halfspaces. 
Formally, we say that a convex set $K\sse\R^n$ has \emph{geometric width} at most $r$ if it can be written as
\[
	K
	=
	\bigcap_{t\in T}
	\cbra{x\in\R^n:t\cdot x\leq r_t},
	\qquad
	T\sse \S^{n-1},
	\qquad
	\abs{r_t}\leq r\,.
\]

De, Nadimpalli, O'Donnell, and Servedio~\cite{de2026sparsifying} showed that every convex set of geometric width $r$ can be approximated, in Gaussian distance, by an intersection of $\exp\exp(O(r^4))$ halfspaces. 
This immediately yields algorithms for agnostic learning and property testing of convex sets of low geometric width. 
Using the $L^p$-sparsifier of \Cref{cor:lp}, derived from \Cref{thm:GP-sparsification}, we obtain the following exponential improvement of \cite[Theorem~3]{de2026sparsifying}. 

\begin{restatable}{theorem}{polytopeapproxthm}
	\label{thm:polytope-approximation}
	Let $r\geq1$ and $0<\eps<1/2$.  Suppose $K\sse\R^n$ has geometric width at
	most $r$.  Then there is a set $L\sse\R^n$, which is an intersection of at
	most
	\[
		\newalpha 
		:=
		\exp\pbra{
			O\pbra{
				\frac{\log(1/\eps)(r+\sqrt{\log(1/\eps)})^4}{\eps^2}
			}
		}
	\]
	halfspaces, such that $\dG(K,L)\leq\eps$.  Equivalently,
	$
		\newalpha 
		=
		\exp(\wt{O}(r^4/\eps^2))
	$.
\end{restatable}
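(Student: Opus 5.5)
We reduce \Cref{thm:polytope-approximation} to a statement about one Gaussian random variable, following the template of \cite[Theorem~3]{de2026sparsifying}. The sparsification input will be the $L^p$ sparsifier of \Cref{cor:lp} rather than \Cref{thm:GP-sparsification}.

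\textbf{Step 1: Reduction to a Gaussian process.} Write $K=\bigcap_{t\in T}\{x: t\cdot x\le r_t\}$ with $T\sse\S^{n-1}$ and $|r_t|\le r$. Membership $x\in K$ is equivalent to $F(x):=\sup_{t\in T}(t\cdot x - r_t)\le 0$. The offsets are not part of a canonical process, so we homogenize them. Lift to $\R^{n+1}$ with $t\mapsto \tilde t:=(t,\,-r_t/\lambda)$ for a parameter $\lambda$, and evaluate at $(x,\lambda)$. This gives $\tilde t\cdot(x,\lambda)=t\cdot x-r_t$.

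We would rather avoid conditioning on the last coordinate. Instead we work with the index set $\tilde T=\{(t,-r_t)\}$ and treat the affine shift directly. The proof of \Cref{cor:lp} should extend to processes of the form $\bg\cdot t + a_t$, and if not we absorb the shift by adding one extra Gaussian coordinate and conditioning. Either way we have $w(\tilde T)\le w(T)+O(r)$.

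The width $w(T)$ may be as large as $\sqrt n$, so we first truncate. Halfspaces with $r_t$ small cut $K$ substantially, and the relevant supremum can be replaced by a process whose scale is governed by $r$. Following \cite{de2026sparsifying}, we discard constraints that are redundant on most of Gaussian space, meaning those with $r_t\gtrsim r+\sqrt{\log(1/\eps)}$ after rescaling. After that, the effective width of the process is $O(r+\sqrt{\log(1/\eps)})$ times its fluctuation scale. This is where the factor $(r+\sqrt{\log(1/\eps)})$ enters.

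\textbf{Step 2: Sparsify.} Apply \Cref{cor:lp} with accuracy $\eps'$ and moment $p$. This yields $S\sse T$ and shifts $c_s\ge0$ such that
\[
\|F-G\|_{L^p}\le \eps' w,\qquad G(x):=\max_{s\in S}\{s\cdot x-r_s+c_s\}.
\]
Define $L:=\{x: G(x)\le 0\}$. This is an intersection of $|S|$ halfspaces $\{s\cdot x\le r_s-c_s\}$.

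\textbf{Step 3: Bound the Gaussian distance.} The sets $K$ and $L$ can disagree at $x$ only if either $|F-G|>\delta$ or $|F|\le\delta$. Markov's inequality in $L^p$ bounds the first event by $(\eps' w/\delta)^p$.

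The second event needs an anti-concentration bound for the supremum near $0$, namely $\Pr[|F(\bg)|\le\delta]\lesssim \delta\cdot(r+\sqrt{\log(1/\eps)})$. We would take this from the Gaussian-surface-area / anti-concentration lemma for suprema with bounded offsets used in \cite{de2026sparsifying}; for convex sets of geometric width $r$ the surface area is controlled by $r$. This anti-concentration estimate is the main obstacle: the relevant lemma must hold uniformly after the truncation of Step 1.

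\textbf{Step 4: Choose parameters.} Take $\delta\asymp \eps/(r+\sqrt{\log(1/\eps)})$, which makes the anti-concentration term at most $\eps/2$. Take $p\asymp\log(1/\eps)$ and $\eps'\asymp\delta/w$, so that the Markov term is at most $\eps/2$.

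\Cref{cor:lp} gives size roughly $\exp(O(p/\eps'^2))$. Substituting,
\[
\frac{p}{\eps'^2}\asymp \frac{\log(1/\eps)\, w^2\,(r+\sqrt{\log(1/\eps)})^2}{\eps^2}.
\]
With $w\lesssim (r+\sqrt{\log(1/\eps)})$ from Step 1, this matches the exponent claimed in \Cref{thm:polytope-approximation}.
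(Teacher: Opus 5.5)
Your skeleton (the $L^p$ sparsifier of \Cref{cor:lp} with $p\asymp\log(1/\eps)$, Markov's inequality, anti-concentration at the threshold, and the parameter count in Step~4) matches the paper's proof of \Cref{lem:exact-width-polytope}, and the bookkeeping is right. The difference is that you handle the offsets $r_t$ directly. The paper instead proves only the exact-width case, $K=\{x:\sup_{t\in T}x\cdot t\leq 1\}$ with $T\sse\frac{1}{r}\S^{n-1}$, and then invokes the reduction from general geometric width to exact width that is implicit in \cite[Theorem~3]{de2026sparsifying}. That choice is what makes the anti-concentration input available off the shelf.

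The step you call ``the main obstacle'' is a genuine gap as written. The Chernozhukov--Chetverikov--Kato bound $\Pr[|\sup_{u\in U}\bg\cdot u-\theta|\leq\delta]\leq 4\delta(1+w(U))$ requires $U\sse\S^{n-1}$, i.e.\ a centered unit-variance supremum. A level set of $F(x)=\sup_t(t\cdot x-r_t)$ with non-constant $r_t$ is not of that form. The bounded-offset lemma you attribute to \cite{de2026sparsifying} is also not what that work supplies: as this paper notes, it too goes through the exact-width case \cite[Lemma~28]{de2026sparsifying}. The estimate you want is nonetheless true and can be proved by lifting:
\begin{itemize}
\item Set $b_t:=r-r_t\in[0,2r]$ and write $F+r=\sup_t(\bg\cdot t+b_t)$.
\item For $m\geq 4r^2$, replace $b_t$ by $\beta_t\|\bz\|_2$, where $\beta_t:=b_t/\sqrt m$ and $\bz\sim N(0,I_m)$ is independent.
\item Then $\sup_{t\in T,\,v\in\S^{m-1}}\bigl(\sqrt{1-\beta_t^2}\,\bg\cdot t+\beta_t\,\bz\cdot v\bigr)$ is a centered unit-variance supremum of width at most $w(T)+2r+O(1)$.
\item It differs from $F(\bg)+r$ by at most $\frac{4r^2}{m}\|\bg\|_2+\frac{2r}{\sqrt m}\bigl|\|\bz\|_2-\sqrt m\bigr|$.
\item Apply the CCK bound to this supremum and let $m\to\infty$.
\end{itemize}
This yields $\Pr[|F(\bg)|\leq\delta]\lesssim\delta(1+r+w(T))$. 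Alternatively, prove only the exact-width case and use the reduction, as the paper does.

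Step~1 also needs repair. The truncation you describe is vacuous: by hypothesis $|r_t|\leq r$, so there are no constraints with $r_t\gtrsim r+\sqrt{\log(1/\eps)}$ to discard. A large $w(T)$ does not signal redundant constraints; it signals that $K$ has tiny Gaussian measure. The right argument is the one behind \eqref{eq:polytope-width-bound}. Assume $\eps<\gamma_n(K)<1-\eps$ (otherwise take $L=\emptyset$ or $L=\R^n$). Since $F(x)\geq\sup_t t\cdot x-r$, this gives $\Pr[\sup_t t\cdot\bg\leq r]\geq\eps$. Lipschitz concentration then yields $w(T)\leq r+\sqrt{2\log(2/\eps)}$. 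This is the width bound your Step~4 needs, and it also turns $\delta(1+r+w(T))$ into $\delta(r+\sqrt{\log(1/\eps)})$.

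Your primary route in Step~2 is fine. \Cref{lem:soft-active-clustering} and its $L^p$ variant go through verbatim with initial condition $a_t(0)=-r_t$, because neither the Peano argument nor the Brascamp--Lieb bound uses the initial shifts. The net is still taken in $T$, with size governed by $w(T)$. Your fallback of adding a Gaussian coordinate and conditioning would not work, however, since an $L^p(\gamma_{n+1})$ bound says nothing about a single slice.

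With these repairs your direct argument would be more self-contained than the paper's. The paper keeps \Cref{cor:lp} and the CCK bound in their stated canonical, centered forms, at the price of outsourcing the general-offset case to \cite{de2026sparsifying}.
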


As in \cite{de2026sparsifying}, this approximation theorem immediately yields improved algorithms for agnostic learning and property testing of convex sets of low geometric width; see \Cref{subsec:apps-polytope} for details. 

We also prove a complementary lower bound showing that the quantitative dependence on the geometric width parameter cannot be substantially improved: there are convex sets of geometric width $r$ for which constant-error approximation requires $\exp(\Omega(r^2))$ halfspaces. 
For comparison, Theorem~63 of \cite{DNS23-polytope} already gives an $\exp(\Omega(r))$ lower bound: it shows that $\exp(\Omega(\sqrt n))$ halfspaces are necessary to approximate the Euclidean ball of Gaussian measure $1/2$ to constant error in Gaussian distance, and this ball has geometric width $\Theta(\sqrt n)$. 
We improve this lower bound by considering the $\ell_\infty$ ball of Gaussian measure $1/2$. 

\begin{restatable}{theorem}{cubepolytopelb}
\label{thm:gaussian-cube-polytope-lb} 
	Let $r_n>0$ be chosen so that $C_n:=[-r_n,r_n]^n\sse\R^n$ has Gaussian measure $1/2$. 
	If $P\sse\R^n$ is an intersection of $m$ halfspaces, then
	\[
		\dG(P,C_n)
		\geq
		\frac{1}{2}-\sqrt{\frac{m\ln 2}{2n}}\,.
	\] 
	Consequently, if $\dG(P, C_n) \leq \eps<1/2$, then
	\[
		m\geq \frac{2(1/2-\eps)^2}{\ln 2}\,n\,.
	\]
\end{restatable}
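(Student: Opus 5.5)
We prove the bound by a counting argument over the faces of $P$. Write $P=\bigcap_{j=1}^m\{x: a_j\cdot x\le b_j\}$. Each halfspace should be able to ``cut off'' only a small amount of the cube's complement. The measure is split as $1/2$ inside $C_n$ and $1/2$ outside. We will show that if $\dG(P,C_n)<1/2$, then $P$ must nontrivially exclude the outside mass. Each facet, however, can only efficiently exclude the region near a bounded number of cube faces. Coordinates are independent under the Gaussian measure, so the outside event is $\bigcup_i\{|\bg_i|>r_n\}$, where $p:=\Pr[|\bg_i|>r_n]$ satisfies $(1-p)^n=1/2$, i.e.\ $p\approx \ln 2/n$.

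The plan is to pass to a Boolean/product structure. Let $\bz_i=\mathbf 1[|\bg_i|>r_n]$, so $\bz\in\{0,1\}^n$ has i.i.d.\ Bernoulli$(p)$ coordinates and $\bg\in C_n$ iff $\bz=0$. Condition on the vector of signs and on the magnitudes within each regime. The key claim is that a single halfspace $H_j$ not containing the whole cube, but intersecting it in most of its mass, can exclude the outside event $\{\bz=e_i\}$ (exactly one coordinate exceeding $r_n$) for at most a few $i$. Alternatively, we use a cleaner route suggested by the $\sqrt{m\ln2/(2n)}$ shape, namely Pinsker/KL. Let $\mu$ be the Gaussian measure restricted to $C_n$ (normalized), and $\nu$ the Gaussian restricted to $C_n^c$. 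Then
\[
\dG(P,C_n)=\tfrac12\bigl(\mu(P^c)+\nu(P)\bigr)\ge \tfrac12-\tfrac12\bigl(\mu(P)-\nu(P)\bigr)\ge\tfrac12-\tfrac12\,\dtv(\mu|_{\mathcal F},\nu|_{\mathcal F}),
\]
where $\mathcal F$ is the $\sigma$-algebra generated by the $m$ linear indicators. Here one uses $\mu(P)-\nu(P)\le 2\,\dtv$ appropriately, so the factor must be tracked.

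Next, we bound the distinguishing power of $m$ halfspaces by Pinsker:
\[
\dtv\le\sqrt{\tfrac12\,\dkl(\,\cdot\,)}.
\]
The aim is KL at most $m\ln 2/n$ per this structure. The natural route is to replace $\nu$ by a mixture. Note that $\nu$ is close to the uniform mixture over $i$ (and sign) of $\nu_i$, where exactly coordinate $i$ is outside. Each $\nu_i$ differs from $\mu$ in only one coordinate. A halfspace with normal $a_j$ detects coordinate $i$ only through $a_{j,i}$, and $\sum_i a_{j,i}^2=1$. We then use convexity of KL or a chain rule over the $m$ bits: each bit of information carries at most $\ln 2$ nats of entropy, giving mutual information between the index $\mathbf i$ and the $m$ bits of at most $m\ln2$. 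For a uniform index over $n$ choices, Fano-type reasoning then shows that the average TV between $\mu|_{\mathcal F}$ and $\nu_{\mathbf i}|_{\mathcal F}$ is at most $\sqrt{m\ln 2/(2n)}$. Concretely, $\frac1n\sum_i \dkl(\nu_i|_{\mathcal F}\,\|\,\mu|_{\mathcal F})$ should be compared with $I(\mathbf i;\text{bits})\le m\ln2$, divided by $n$.

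The main obstacle is making the last step rigorous. It requires exhibiting a reference measure for which the KL averaged over $i$ is controlled by the mutual information, and handling the mismatch between $\nu$ and the exact ``one coordinate out'' mixture. The latter has probability $\approx \ln 2\cdot\frac12/\frac12$, so a correction is needed, or else $\nu$ should be expressed directly as a mixture over the first exceeding coordinate. First I would try to define $\mathbf i$ as the first coordinate outside $[-r_n,r_n]$ under $\nu$, and to use the identity $\dkl(P_{X|\mathbf i}\|P_X)$ averaged equal to $I$, with $P_X$ the mixture. The remaining task is to relate the mixture to $\mu$ by showing that revealing a single coordinate's exceedance barely changes the other coordinates. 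Finally, rearranging gives the stated bound on $m$.
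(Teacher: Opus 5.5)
Your first reduction is sound: $\dG(P,C_n)=\frac12-\frac12\bigl(\mu(P)-\nu(P)\bigr)$, and since $\gamma_n=\frac12\mu+\frac12\nu$ one has $\dtv(\mu,\nu)=2\,\dtv(\mu,\gamma_n)$ on any $\sigma$-algebra, so this matches the paper's starting point in different notation. The gap is the second step, which carries all the content, and the mechanism you propose cannot supply it.

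\begin{itemize}
\item The identity behind your mutual-information bound is $I(\mathbf i;Y)=\frac1n\sum_i\dkl\bigl(P_{Y\mid \mathbf i=i}\,\|\,P_Y\bigr)$. Its reference measure is the mixture $P_Y$ (i.e.\ $\nu|_{\mathcal F}$), not $\mu|_{\mathcal F}$. Moreover, $I\le m\ln 2$ bounds this \emph{average} by $m\ln 2$; there is no further factor $1/n$ to divide by.
\item More fundamentally, $I(\mathbf i;Y)$ measures how well the bits locate \emph{which} coordinate exceeds $r_n$, not \emph{whether} one does. Your argument uses only that $\mathcal F$ is generated by $m$ bits, so it applies verbatim to the single bit $Y=\mathbf 1[x\in C_n]$. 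There $I(\mathbf i;Y)=0$, yet $\mu|_{\mathcal F}$ and each $\nu_i|_{\mathcal F}$ are mutually singular, and indeed $\dG(C_n,C_n)=0$.
\item So any correct proof must exploit that the bits are \emph{halfspace} indicators. Your remark that $\sum_i a_{j,i}^2=1$ points at this but is never used.
\item Also, $\dkl(\nu_i|_{\mathcal F}\,\|\,\mu|_{\mathcal F})$ is typically $+\infty$. This already happens for the single halfspace $\{x_i\le r_n\}$, which $\nu_i$ violates with probability $1/2$ and $\mu$ never violates. So $\mu$ is the wrong reference measure for KL.
\end{itemize}

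The missing idea is the one the paper uses. Since $P=\pi_V^{-1}(A)$ for the span $V$ of the normals, with $d=\dim V\le m$, it suffices to compare the pushforwards to $V$ of $\mu$ and of $\gamma_n$ (not of $\nu$). Against the Gaussian, KL is finite: $\dkl(\mu\|\gamma_n)=\ln\bigl(1/\gamma_n(C_n)\bigr)=\ln 2$, with $\mu$ a product of $n$ truncated Gaussians, each at divergence $\ln 2/n$ from $\gamma_1$. The Zamir--Feder inequality $h(M\bx)\ge d\,h(\bx_1)$, for $M$ with orthonormal rows, shows that any $d$-dimensional linear projection retains at most the fraction $d/n$ of this divergence. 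Hence the projected divergence is at most $d\ln 2/n$, and Pinsker gives $\dtv\le\sqrt{m\ln 2/(2n)}$. This projection-entropy inequality is exactly where linearity enters, and it (or an equivalent entropic Brascamp--Lieb/Barthe-type bound) is absent from your plan. With it, the decomposition of $\nu$ over the first exceeding coordinate and the correction for the remaining coordinates become unnecessary.
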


To see the dependence on geometric width, note that $C_n$ has geometric width $r_n$ since it is the intersection of the coordinate halfspaces $\{x_i\leq r_n\}$ and $\{-x_i\leq r_n\}$. 
Moreover, the condition $\gamma_n(C_n)=1/2$ implies $r_n^2=\Theta(\log n)$. 
Thus, for constant $\eps<1/2$, \Cref{thm:gaussian-cube-polytope-lb} shows that approximating $C_n$ requires $\Omega(n)=\exp(\Omega(r_n^2))$ halfspaces. 

\subsection{Technical Overview}
\label{subsec:technical-overview}

We now give a technical overview of our main result, \Cref{thm:GP-sparsification}. 
For simplicity, we assume throughout that the index set $T\sse \R^n$ is finite, which allows us to write $\max$ rather than $\sup$ over $T$. 
We also normalize $w(T) = 1$. 
This is without loss of generality: if $w(T)>0$ then we may rescale $T$, while if it is zero the problem is trivial (see~\Cref{subsec:prelims-GP}).  
 
Recall that the goal is to approximate the maximum of many linear functions by the maximum of far fewer affine functions under the Gaussian distribution. 
More precisely, with the normalization $w(T)=1$, we wish to find a subset $S\sse T$ and shifts $\cbra{c_s\geq 0}_{s\in S}$ such that 
\[
	\vabs{
		\max_{t\in T}\bg\cdot t - \max_{s\in S} \cbra{\bg\cdot s + c_s}
	}_{L^2} \leq \eps\,,
\]
with $|S| = O_\eps(1)$, where the implicit constant may depend on $\eps$ but not on the ambient dimension $n$. 

We view the construction as a clustering problem for the index set $T$.
Namely, we choose a small set of representatives $S\sse T$ and a map $\Pi:T\to S$ assigning each $t\in T$ to a representative $\Pi(t)\in S$, with the convention that $\Pi(s) = s$ for $s \in S$. 
For $s\in S$, let  
\[
	P_s:=\cbra{t\in T:\Pi(t)=s}
\]
be the cluster represented by $s$. 
The starting point, both for our approach and for that of \cite{de2026sparsifying}, is the simple decomposition 
\begin{equation} \label{eq:intro-simple-decomp}
	\max_{t\in T}\bg\cdot t
	=
	\max_{s\in S}
	\left\{
		\bg\cdot s
		+
		\max_{t\in P_s}\bg\cdot (t-s)
	\right\}.
\end{equation}
Thus, after a clustering has been fixed, the original Gaussian process splits into two pieces: a coarse process indexed by the representative elements $s\in S$ and a residual process inside each cluster $P_s$. 
The role of the shift $c_s$ is to compensate for the residual contribution lost when the cluster $P_s$ is collapsed to its representative $s$.  
This raises two questions:  
\begin{itemize}
	\item How should we choose the representatives $S\sse T$ and the clustering map $\Pi:T\to S$?
	\item Once a clustering has been chosen, how should we choose the constants $c_s$ so that replacing each residual process on $P_s \sse T$ by $c_s$ creates only a small error?
\end{itemize}
As we explain below, these two questions are tightly linked: the method used to control the residual error determines what kind of clustering is feasible. 
This is the main point of difference between our approach and that of \cite{de2026sparsifying}. 

\subsubsection{The \cite{de2026sparsifying} Approach} 
\label{subsubsec:DNOS-approach}

We first recall that \cite{de2026sparsifying} obtain an $L^1$-error guarantee, rather than the $L^2$-error guarantee of \Cref{thm:GP-sparsification}. 
Suppose, for now, that a clustering $S \sse T$ and a map $\Pi: T \to S$ are already chosen. 
For a fixed cluster $P_s$, a natural way to collapse the residual process $\max_{t\in P_s}\bg\cdot(t-s)$ to a constant is to replace it by its mean. 
That is, one sets
\begin{equation} \label{eq:DNOS-shift-choices}
	c_s
	:=
	\Ex_{\bg\sim N(0,I_n)}
	\sbra{
		\max_{t\in P_s}\bg\cdot(t-s)
	},
\end{equation}
giving the candidate approximator $\max_{s\in S}\cbra{\bg\cdot s+c_s}$. 
Note that the map $g \mapsto \max_{t\in P_s}g\cdot(t-s)$
is $r_s$-Lipschitz where $r_s:=\sup_{t\in P_s}\|t-s\|_2$ is the radius of the cluster $P_s$. 
As a result, the error within the cluster $P_s$ is readily controlled: standard subgaussian concentration of suprema of Gaussian processes
(see, for example, Appendix~A.5 of \cite{chatterjee2014superconcentration}) gives 
\begin{equation} 
\label{eq:DNOS-single-cluster-error}
	\Ex_{\bg\sim N(0,I_n)}
	\sbra{
		\abs{
			\max_{t\in P_s}\bg\cdot(t-s)-c_s
		}
	}
	\lesssim r_s\,.
\end{equation}
Turning to the total error between this candidate approximator and the original process, 
\Cref{eq:intro-simple-decomp} and the elementary inequality $\abs{\max_s a_s-\max_s b_s}\leq \max_s\abs{a_s-b_s}$ imply   
\begin{equation} \label{eq:DNOS-error} 
	\Ex_{\bg\sim N(0,I_n)}
	\sbra{
		\abs{
			\max_{t\in T}\bg\cdot t
			-
			\max_{s\in S}\cbra{\bg\cdot s+c_s}
		}
	}
	\leq
	\Ex_{\bg\sim N(0,I_n)}
	\sbra{
		\max_{s\in S}
		\abs{
			\max_{t\in P_s}\bg\cdot(t-s)
			-
			c_s
		}
	}.
\end{equation}

This bound reveals why this error-control strategy is incompatible with a naive \emph{single-scale} clustering, such as taking $S$ to be an $\eta$-net. 
Indeed, suppose that $S$ is an $\eta$-net of $T$, and that each $t\in T$ is assigned to a representative $s \in S$ with $\|t-s\|_2 \leq \eta$.
Then $r_s \leq \eta$ for $s \in S$, and so the residual error in each cluster is $O(\eta)$ (cf.~\Cref{eq:DNOS-single-cluster-error}).  
Taking the maximum over $|S|$ such errors costs a factor of order $\sqrt{\log |S|}$ (using subgaussianity of Gaussian suprema once again), yielding  
\[
	\text{R.H.S. of \Cref{eq:DNOS-error}}
	\lesssim
	\eta\sqrt{\log |S|}\,.
\]
However, there exist sets $T \sse \R^n$ satisfying $w(T) = 1$ such that the size of any $\eta$-net of $T$ is $\exp(\Theta(\eta^{-2}))$.  
For such sets, the preceding bound becomes
\[
	\eta\sqrt{\log |S|}
	\lesssim 
	\eta\sqrt{1/\eta^2}
	=
	O(1)
\]
rather than the desired $O(\eps)$. 
\Cref{eg:intro-example} gives a concrete example illustrating this barrier, and also points toward the key change in our proof: with a different choice of centering for the residual processes, a single-scale $\eps$-net will in fact suffice. 

The argument of \cite{de2026sparsifying} overcomes this obstacle by using a \emph{multi-scale} partition of $T$ coming from Talagrand's majorizing measures theorem~\cite[Theorem~2.10.1]{talagrand2022upper}. 
Roughly speaking, their construction stops different branches of the \emph{generic chaining} at different scales. 
At each scale, the diameter of a part controls the subgaussian tail available for the corresponding residual error, while the number of parts determines the cost of the union bound. 
The majorizing measures theorem balances these two effects across scales, making it possible to control all residual errors simultaneously. 
This yields a sparsifier with dimension-independent size, but with double-exponential dependence on the inverse accuracy parameter $\eps^{-1}$. 

\subsubsection{A Guiding Example}
\label{subsubsec:intro-example}

The above discussion suggests that a single-scale net is incompatible with the error analysis used by \cite{de2026sparsifying}. 
The following toy example makes this concrete, and more importantly, identifies the basic issue: the shifts in \Cref{eq:DNOS-shift-choices} center each residual process under its unconditional Gaussian law, rather than under the distribution relevant to the outer maximum. 

\begin{example} \label{eg:intro-example}
	Let $\{e_1, \dots, e_n\} \sse \R^n$ be the standard basis vectors. Consider the index set 
	\[
		T := \cbra{\lambda e_i, 2\lambda e_i : 1 \leq i \leq n} 
		\quad\text{where}\quad 
		\lambda > 0~\text{is chosen so that}~\Ex_{\bg\sim N(0,I_n)}\sbra{\max_{t\in T} \bg\cdot t} = 1\,.
	\]
	Writing $\bM := \max_i \bg_i$ and $x_+ := \max\{0, x\}$, note that 
	\begin{equation} \label{eq:intro-eg-true-process}
		\max_{t\in T} \bg\cdot t = \max_{1\leq i\leq n} \max \{\lambda \bg_i, 2\lambda \bg_i\} = \lambda(\bM + \bM_+)\,,
	\end{equation}
	and hence $\lambda^{-1} = \Ex\sbra{\bM + \bM_{+}} \asymp \sqrt{\log n}$~\cite[Appendix~A.2]{chatterjee2014superconcentration}. 
	Note that the set $S := \{\lambda e_i : 1 \leq i \leq n\}$ forms a $\lambda$-net for $T$ with clusters $P_i := \{\lambda e_i, 2\lambda e_i\}$. 
	The residual process obtained by collapsing the cluster $P_i$ to the representative $\lambda e_i$ is $\max\{0, \lambda \bg'\}$ where $\bg' \sim N(0, 1)$ is a univariate Gaussian. 
	The shifts chosen by  \cite{de2026sparsifying} (cf.~\Cref{eq:DNOS-shift-choices}) are therefore 
	\[
		c_i 
		= \Ex_{\bg' \sim N(0, 1)}\sbra{\lambda\bg'_+} 
		= \frac{\lambda}{\sqrt{2\pi}}\,,
	\]
	giving the candidate approximator  
	\[
		\max_{1 \leq i \leq n} \cbra{\lambda\bg_i + c_i} = \lambda\pbra{\bM + \frac{1}{\sqrt{2\pi}}}\,.
	\]
	It follows, using~\Cref{eq:intro-eg-true-process}, that the $L^2$-error between this approximator and the true process is 
	\[
		\vabs{
			\lambda\pbra{\bM + \bM_+} - 	\lambda\pbra{\bM + \frac{1}{\sqrt{2\pi}}}
		}_{L^2}
		= 
		\lambda\cdot
		\vabs{
			\bM_+ - \frac{1}{\sqrt{2\pi}} 
		}_{L^2}
		=
		\Theta(1)\,,
	\]
	where we used $\Ex\sbra{\bM_+} \asymp \sqrt{\log n}$ and
	$\lambda^{-1}\asymp \sqrt{\log n}$. 
\end{example}

Crucially, the failure in the example above is \emph{not} caused by taking $S$ to be a $\lambda$-net of $T$, but rather by the choice of shifts. 
For this instance, \Cref{eq:intro-eg-true-process} suggests the better choice
\[
	 c_i' := \lambda\cdot\Ex\sbra{\bM_+}\,,
\]
which gives the approximator
\[
	\max_i\cbra{\lambda \bg_i+c_i'}
	=
	\lambda\pbra{\bM+\Ex\sbra{\bM_+}}\,.
\]
The $L^2$-error between this approximator and the true process is
\[
	\vabs{\lambda(\bM+\bM_+) - \lambda\pbra{\bM+\Ex\sbra{\bM_+}}}_{L^2}
	=
	\lambda\cdot\Var\sbra{\bM_+}^{1/2}
	=
	O\pbra{\frac{1}{\log n}}\,,
\]
where we used the standard estimate $\Var\sbra{\bM_+}\asymp \log^{-1} n$~\cite[Appendix~A.2]{chatterjee2014superconcentration} together with $\lambda^{-1}\asymp\sqrt{\log n}$.

This points to the main change in our proof: the residual process should be centered according to its active behavior, not its unconditional law. 
In the example above, the residual from $P_i$ matters on the active region where the representative $\lambda e_i$ wins the outer maximum; on this region, it has the law of $\lambda\bM_+$, not the law of $\lambda\bg'_+$ for an independent one-dimensional Gaussian $\bg'$. 
We therefore center each residual according to its \emph{active law}: the law it sees on the region of Gaussian space where the corresponding representative can influence the maximum; see \Cref{fig:hexagon-shift} for an illustration. 
The difficulty is that these active regions themselves depend on the shifts, making it hard to choose the shifts in ``one shot.''  
We resolve this by choosing the shifts continuously along an interpolation, so that at each infinitesimal step the relevant active regions are already determined. 

\subsubsection{Our Approach} 
\label{subsubsec:our-approach}

\input{sections/fig}

We now turn to our approach. 
The discussion here is heuristic: we work directly with maxima and suppress differentiability issues. 
In the formal argument in \Cref{sec:GP-sparsification}, these issues are handled by replacing the maximum with a softmax. 

By Sudakov's minoration (\Cref{lem:sudakov}) and the normalization $w(T)=1$, there is an $\eps$-net $S\sse T$ of size $|S|\leq \exp(O(1/\eps^2))$. 
Let $\Pi:T\to S$ assign each point of $T$ to its representative in $S$, with the convention that $\Pi(s)=s$ for $s\in S$. 
The technical heart of the proof is to construct shifts for this fixed net so that the resulting approximation has $L^2$-error at most $\eps$.
We construct these shifts dynamically:
as $\theta$ runs from $0$ to $1$, each point $t\in T$ moves linearly toward its representative $\Pi(t)$, and we control the maximum along this path. 
For $t\in T$, define
\[
    v_t\pbra{\theta}
    :=
    \pbra{1-\theta}t+\theta\Pi\pbra{t}\,,
    \qquad
    0\leq \theta\leq 1\,.
\]
Thus $v_t\pbra{0}=t$ and $v_t\pbra{1}=\Pi\pbra{t}$. 
We construct shifts $a_t\pbra{\theta}$ along this interpolation with the initial condition $a_t\pbra{0}=0$. 
With these shifts still to be determined, let the
maximum at time $\theta$ be
\[
    M_\theta\pbra{g}
    :=
    \max_{t\in T}
    \cbra{
        g\cdot v_t\pbra{\theta}+a_t\pbra{\theta}
    }\,.
\]
In particular, 
\[
	M_0(g)=\max_{t\in T}g\cdot t
	\qquad\text{and}\qquad 
	M_1(g)
	=
	\max_{t\in T}
	\left\{
		g\cdot \Pi(t)+a_t(1)
	\right\}
	=:
	\max_{s\in S}
	\left\{
		g\cdot s+c_s
	\right\}\,,
\]
where $c_s:=\max_{t \in P_s} a_t(1)$. 
Because $\Pi(s)=s$, the path for $s$ itself is constant and $a_s(\theta)=0$, so $c_s\geq 0$. 
We will obtain the desired approximator by controlling the motion of
$M_\theta$ in $L^2$ as $\theta$ runs from $0$ to $1$.
Note that the $L^2$-error is 
\begin{align}
    \norm{M_1-M_0}_{L^2}
    &=
    \norm{
        \int_0^1
        \frac{d}{d\theta}M_\theta
        \,d\theta
    }_{L^2} \nonumber \\ 
    &\leq
    \int_0^1
    \norm{
        \frac{d}{d\theta}M_\theta
    }_{L^2}
    \,d\theta\,. \label{eq:overview-our-goal}
\end{align} 
We will thus choose shifts $a_t\pbra{\theta}$ to ensure that the derivative of $M_\theta$ is small in $L^2$. 

For fixed time $\theta$, let $A_t(\theta)\sse\R^n$ be the region on which the affine function corresponding to $t$ is active, ignoring ties and boundary overlap in this heuristic discussion. 
In other words,  
\begin{align*}
	A_t(\theta)
	&:= \cbra{g \in \R^n: g\cdot v_t(\theta)+a_t(\theta) = M_\theta(g)} \\
	&= \bigcap_{t' \in T} \cbra{g\in\R^n : g\cdot\pbra{v_t(\theta) - v_{t'}(\theta)} + \pbra{a_t(\theta) - a_{t'}(\theta)} \geq 0}\,.
\end{align*}
In particular, each active region $A_t\pbra{\theta}$ is an intersection of halfspaces, and hence is convex. 
Additionally, the sets $A_t\pbra{\theta}$ form a partition of $\R^n$ (up to boundary overlap on sets of measure $0$); see \Cref{fig:hexagon-shift} for an illustration. 

Continuing heuristically, on the region $A_t\pbra{\theta}$, the derivative of
the winning affine function is
\[
    \frac{d}{d\theta}M_\theta\pbra{g}
    =
    \frac{d}{d\theta}
    \pbra{
        g\cdot v_t\pbra{\theta}+a_t\pbra{\theta}
    }
    =
    -g\cdot\pbra{t-\Pi\pbra{t}}+a_t'\pbra{\theta}\,.
\]
The interpolation therefore produces the residual term $-g\cdot\pbra{t-\Pi\pbra{t}}$ on the region where $t$ is active. 

We choose the derivative $a_t'\pbra{\theta}$ to cancel the mean of this residual on the active region. Namely, when
$\Prx\sbra{\bg\in A_t\pbra{\theta}}>0$, set
\begin{equation} \label{eq:overview-ODE}
    a_t'\pbra{\theta}
    :=
    \Ex_{\bg\sim N\pbra{0,I_n}}
    \sbra{
        \bg\cdot\pbra{t-\Pi\pbra{t}}
        \,\middle|\,
        \bg\in A_t\pbra{\theta}
    }\,.
\end{equation}
Thus, together with the initial condition $a_t(0) = 0$, \Cref{eq:overview-ODE} specifies a coupled initial-value problem for the shifts $\{a_t(\theta)\}_{t\in T}$. 
The coupling comes from the fact that the active region $A_t(\theta)$ depends on the entire vector of shifts $(a_s(\theta))_{s\in T}$. 
In this heuristic overview, we ignore the (standard) existence and regularity issues and define the shifts $\{a_t(\theta)\}$ by solving this ODE system on the interval $[0,1]$. 

Then, on $A_t\pbra{\theta}$,
\[
    \frac{d}{d\theta}M_\theta\pbra{g}
    =
    -\pbra{
        g\cdot\pbra{t-\Pi\pbra{t}}
        -
        a_t'\pbra{\theta}
    }\,,
\]
which has mean zero after conditioning on $\bg\in A_t\pbra{\theta}$.
As a consequence (and ignoring zero measure events), 
\[
    \Ex\sbra{
        \pbra{
            \frac{d}{d\theta}M_\theta\pbra{\bg}
        }^2
    }
    =
    \sum_{t\in T}
    \Prx\sbra{\bg\in A_t\pbra{\theta}}\,
    \Varx\sbra{
        \bg\cdot\pbra{t-\Pi\pbra{t}}
        \,\middle|\,
        \bg\in A_t\pbra{\theta}
    }\,.
\]
Since $A_t\pbra{\theta}$ is convex, the Gaussian measure conditioned on the convex set $A_t(\theta)$ is still $1$-strongly log-concave. 
Thus we may apply the Brascamp--Lieb inequality, which extends the Gaussian Poincar\'{e} inequality to strongly log-concave measures; see \Cref{thm:BL}. 
This gives
\[
    \Varx\sbra{
        \bg\cdot\pbra{t-\Pi\pbra{t}}
        \,\middle|\,
        \bg\in A_t\pbra{\theta}
    }
    \leq
    \norm{t-\Pi\pbra{t}}_2^2
    \leq
    \eps^2
\]
where the final inequality relies on the fact that $S$ is an $\eps$-net of $T$. 
(We note that the Brascamp--Lieb inequality in \Cref{thm:BL} is stated for $C^2$ potentials. 
Our formal proof avoids the non-smoothness of conditioning on hard active regions by replacing the maximum with a softmax, so that \Cref{thm:BL} applies directly.)
Plugging this into the previous display yields
\[
     \norm{
        \frac{d}{d\theta}M_\theta
    }_{L^2}^2
    =
    \Ex\sbra{
        \pbra{
            \frac{d}{d\theta}M_\theta\pbra{\bg}
        }^2
    }
    \leq
    \sum_{t\in T}
    \Prx\sbra{\bg\in A_t\pbra{\theta}}\cdot\eps^2        
    =
    \eps^2\,,
\]
which, together with \Cref{eq:overview-our-goal}, completes the argument. 

The formal proof in \Cref{sec:GP-sparsification} implements this argument using a smooth maximum, which simultaneously resolves ties, differentiability of $M_\theta$, and regularity of the resulting ODE.

\subsection{Related Work}
\label{subsec:related-work}

We refer the reader to \cite[Section~1.4]{de2026sparsifying} for a comparison with classical dimensionality-reduction results such as Gordon's theorem~\cite{gordon1988milman} and the Johnson--Lindenstrauss lemma~\cite{JohnsonLindenstrauss:84}. 
Here we briefly discuss two related directions that are closest to the present work.

\paragraph{Sparsification of Gaussian Processes.} 

The work of De, Nadimpalli, O'Donnell, and Servedio~\cite{de2026sparsifying} is the direct predecessor of the present paper, and was discussed above in detail. 
We mention one earlier result in a similar spirit, due to Klartag and Mendelson~\cite{klartag2005empirical}. 
In particular, \cite[Lemma~2.3]{klartag2005empirical} implies the following one-sided approximation result for Gaussian suprema: for every $k\geq 1$, there is a subset $S\sse T$ with $|S|\leq 4^k$ such that, with probability at least $1-\exp(-\Omega(k))$,
\[
	\sup_{t\in T}\bX_t
	\leq
	\sup_{s\in S}\bX_s + O(w(T))\,.
\]
This result is qualitatively different from the sparsification theorems considered here. 
It gives a one-sided high-probability comparison at the natural scale $w(T)$, whereas \Cref{thm:GP-sparsification} gives a two-sided $L^2$ approximation with arbitrarily small relative error $\eps\cdot w(T)$, at the cost of allowing shifts in the approximating subprocess. 
We note that both \cite{klartag2005empirical,de2026sparsifying} make use of Talagrand's majorizing measures theorem~\cite[Theorem~2.10.1]{talagrand2022upper}. 

\paragraph{Analogy with Sparsification of CNFs and Submodular Functions over $\zo^n$.} 

Our approximation result for convex bodies can be viewed as a Gaussian space analogue of recent sparsification results for Boolean functions, most notably the width-$w$ CNF sparsification theorem of Lovett, Wu, and Zhang~\cite{lovett2021decision}.  
In both settings, large intersections---of Boolean clauses or Gaussian halfspaces---can be sparsified to approximators whose size depends only on the width and error, and not on the dimension or the original number of constraints. 
Moreover, in both settings the dependence on the relevant width parameter is single-exponential, up to the precise exponent.  
Similarly, \Cref{thm:norm-junta} is analogous to junta approximation results for submodular functions over $\zo^n$~\cite{FKV13,FV16}, in keeping with the standard viewpoint that submodular functions are discrete analogues of convex functions. 
Our results thus add to a growing dictionary between Boolean function analysis and high-dimensional convex geometry: several phenomena have essentially matching quantitative forms in the Boolean and convex settings, despite being proved by rather different techniques~\cite{DNS21itcs,DNS22,DNS23-polytope}. 

%% file: sections/fig.tex
\begin{figure}
\centering
\begin{subfigure}[t]{0.31\textwidth}
\centering
\begin{tikzpicture}[
    scale=1.4,
    every node/.style={font=\small},
    point/.style={circle, fill, inner sep=1.25pt},
    hpoint/.style={circle, fill, inner sep=1.25pt}
]
    \path[use as bounding box] (-1.55,-1.45) rectangle (1.55,1.45);

    \draw[gray!45, thin]
        (0:1.05) --
        (60:1.05) --
        (120:1.05) --
        (180:1.05) --
        (240:1.05) --
        (300:1.05) -- cycle;

    \foreach \ang in {60,120,180,240,300} {
        \node[point] at (\ang:1.05) {};
    }

    \node[hpoint, label=right:{$t$}] at (0:1.05) {};
\end{tikzpicture}
\caption{}
\end{subfigure}
\hspace{0.015\textwidth}
\begin{subfigure}[t]{0.31\textwidth}
\centering
\begin{tikzpicture}[
    scale=1.4,
    every node/.style={font=\small},
    cell/.style={line width=0.7pt},
    origin/.style={circle, fill=black, inner sep=0.8pt}
]
    \path[use as bounding box] (-1.55,-1.45) rectangle (1.55,1.45);

    \begin{scope}
        \clip (-1.55,-1.45) rectangle (1.55,1.45);

        \fill[gray!20]
            (0,0) -- (1.55,0.895) -- (1.55,-0.895) -- cycle;

        \foreach \ang in {-30,30,90,150,210,270} {
            \draw[cell] (0,0) -- (\ang:2.1);
        }
    \end{scope}

    \node at (0.92,0) {$A_t$};
\end{tikzpicture}
\caption{}
\end{subfigure}
\hspace{0.015\textwidth}
\begin{subfigure}[t]{0.31\textwidth}
\centering
\begin{tikzpicture}[
    scale=1.4,
    every node/.style={font=\small},
    cell/.style={line width=0.7pt},
    oldcell/.style={dashed, gray!50!black, line width=0.55pt},
    movearrow/.style={-{Latex[length=1.2mm]}, gray!50!black, line width=0.5pt},
    origin/.style={circle, fill=black, inner sep=0.8pt}
]
    \path[use as bounding box] (-1.55,-1.45) rectangle (1.55,1.45);

    \pgfmathsetmacro{\R}{1.55}
    \pgfmathsetmacro{\del}{0.28}
    \pgfmathsetmacro{\xL}{-\del/2}
    \pgfmathsetmacro{\ya}{\del/(2*sqrt(3))}
    \pgfmathsetmacro{\yb}{2*\del/sqrt(3)}
    \pgfmathsetmacro{\yleft}{\R/sqrt(3)}
    \pgfmathsetmacro{\yright}{\R/sqrt(3)+\yb}

    \begin{scope}
        \clip (-1.55,-1.45) rectangle (1.55,1.45);

        \fill[gray!20]
            (\xL,-\ya) --
            (0,-\yb) --
            (\R,-\yright) --
            (\R,\yright) --
            (0,\yb) --
            (\xL,\ya) -- cycle;

        \draw[oldcell] (0,0) -- (30:2.1);
        \draw[oldcell] (0,0) -- (-30:2.1);

        \draw[cell] (\xL,\ya) -- (0,\yb) -- (\R,\yright);
        \draw[cell] (\xL,-\ya) -- (0,-\yb) -- (\R,-\yright);
        \draw[cell] (\xL,-\ya) -- (\xL,\ya);

        \draw[cell] (0,\yb) -- (0,1.45);
        \draw[cell] (\xL,\ya) -- (-\R,\yleft);
        \draw[cell] (\xL,-\ya) -- (-\R,-\yleft);
        \draw[cell] (0,-\yb) -- (0,-1.45);

        \draw[movearrow] (0.82,0.47) -- +(120:0.23);
        \draw[movearrow] (0.82,-0.47) -- +(-120:0.23);
    \end{scope}

    \node at (0.92,0) {$A_t'$};
\end{tikzpicture}
\caption{}
\label{subfig:c}
\end{subfigure}

\caption{
	An illustration of the active region $A_t$. 
	(a)~A finite index set $T\subset \R^2$. 
	(b)~The active region $A_t = \{g \in \R^2 : g\cdot t \geq g\cdot t'~\text{for all}~t'\in T\}$ for the unshifted maximum $\max_{t'\in T} g \cdot t'$. 
	(c)~If only the piece indexed by $t$ is shifted from $g\cdot t$ to $g\cdot t+\delta$ for some $\delta > 0$, the active region becomes $A'_t$. The dashed rays show the original active region $A_t$; note that only the comparison boundaries involving $t$ move. 
}
\label{fig:hexagon-shift}
\end{figure}

%% file: sections/preliminaries.tex
\section{Preliminaries}
\label{sec:prelims} 

We use boldfaced letters, such as $\bx$ and $\bX$, to denote random variables, which may be real- or vector-valued; the intended type will be clear from context.  
We write $\bx \sim \calD$ to mean that $\bx$ is distributed according to $\calD$. 
Throughout, $\S^{n-1}$ denotes the Euclidean unit sphere in $\R^n$, and $\|\cdot\|_2$ denotes the Euclidean norm. 
For a set $T \sse \R^n$, an $\eps$-net of $T$ is a set $S \sse \R^n$ such that for every $t \in T$, there is an $s \in S$ with $\|t-s\|_2 \leq \eps$. 

\subsection{Gaussian Random Variables}
\label{subsec:prelims-gaussian-rvs}

We write $N(0, I_n)$ for the $n$-dimensional standard Gaussian distribution, and write $\gamma_n(\cdot)$ for the corresponding measure. 
In other words, 
\[
	d\gamma_n(x) = \frac{1}{(2\pi)^{n/2}} e^{-\|x\|^2/2}\,dx\,. 
\] 
We will make use of the polar decomposition of a standard Gaussian: if $\bg\sim N(0, I_n)$, then $\bg = \bR \, \btheta$ where $\bR = \|\bg\|_2 \sim \chi(n)$ is distributed according to the chi distribution with $n$ degrees of freedom and $\btheta = \|\bg\|_2^{-1} \bg \sim \S^{n-1}$ is distributed according to the Haar measure on the unit sphere. 
Additionally, the random variables $\bR$ and $\btheta$ are independent.  

For $1 \leq p < \infty$, we define the $L^p(\gamma_n)$ norm as  
\[
	\|f\|_{L^p(\gamma_n)} := \Ex_{\bg\sim N(0,I_n)}\sbra{\abs{f(\bg)}^p}^{1/p}\,.
\]
When the ambient Gaussian space is clear, we abbreviate this norm by $\|F\|_{L^p}$ and write $L^p=L^p(\gamma_n)$. 
More generally, for a measure $\mu$ on $\R^n$ we define $\|\cdot\|_{L^p(\mu)}$ analogously. 

We recall the following well-known identity for Gaussian random variables~\cite[Appendix~A.3]{chatterjee2014superconcentration}. 

\begin{lemma}[Gaussian integration by parts] \label{lemma:gaussian-IBP}
	Suppose $f: \R \to \R$ is a differentiable function such that $f'\in L^1(\gamma_1) $. 
	Then 
	\[
		\Ex_{\bg\sim N(0,1)}\sbra{\bg\, f(\bg)} = \Ex_{\bg\sim N(0,1)}\sbra{f'(\bg)}\,.
	\]
	Consequently, for $F : \R^n \to \R$ with $F \in C^1$ and $F, \frac{\partial}{\partial x_i} F(x) \in L^1(\gamma_n)$ for $1 \leq i \leq n$, we have   
	\[
		\Ex_{\bg\sim N(0, I_n)}\sbra{\bg_j F(\bg)}
		=
		\Ex_{\bg\sim N(0, I_n)}\sbra{\frac{\partial}{\partial \bg_j} F(\bg)}
	\]
	for each coordinate $1 \leq j \leq n$. 
\end{lemma}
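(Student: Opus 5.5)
The statement to prove is Gaussian integration by parts (\Cref{lemma:gaussian-IBP}). The one-dimensional identity rests on the fact that the standard Gaussian density $\varphi(x)=(2\pi)^{-1/2}e^{-x^2/2}$ satisfies $\varphi'(x)=-x\varphi(x)$. Formally, $\E[\bg f(\bg)]=-\int f\varphi' = \int f'\varphi$, but the hypotheses only give $f'\in L^1(\gamma_1)$, not any decay of $f$. So I will avoid boundary terms by writing $f$ through its derivative and applying Fubini, rather than integrating by parts directly.

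Replacing $f$ by $f-f(0)$ changes neither side, since $\E[\bg]=0$. So I may assume $f(0)=0$, in which case $f(x)=\int_0^x f'(y)\,dy$. This uses the fundamental theorem of calculus for a differentiable function with locally integrable derivative; if needed I will assume $f'$ is locally integrable, e.g.\ $f$ absolutely continuous, which is the standard reading.

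For $x>0$,
\[
	\int_0^\infty x f(x)\varphi(x)\,dx = \int_0^\infty f'(y)\int_y^\infty x\varphi(x)\,dx\,dy = \int_0^\infty f'(y)\varphi(y)\,dy .
\]
The first equality uses Tonelli/Fubini, and the second uses $\int_y^\infty x\varphi(x)\,dx=\varphi(y)$. Fubini is justified because
\[
	\int_0^\infty\int_0^x |f'(y)|\,x\varphi(x)\,dy\,dx = \int_0^\infty |f'(y)|\varphi(y)\,dy<\infty
\]
by $f'\in L^1(\gamma_1)$. This computation also shows that $\bg f(\bg)$ is integrable. The half-line $x<0$ is symmetric: there $f(x)=-\int_x^0 f'$, and $\int_{-\infty}^y x\varphi(x)\,dx=-\varphi(y)$. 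Adding the two halves gives $\E[\bg f(\bg)]=\E[f'(\bg)]$.

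For the $n$-dimensional statement, fix $j$ and condition on the other coordinates $\bg_{-j}$. Since $F$ and $\partial_j F$ lie in $L^1(\gamma_n)$, Fubini shows that for almost every value of $\bg_{-j}$, the slice $h(x)=F(x,\bg_{-j})$ is $C^1$ with $h'\in L^1(\gamma_1)$. Applying the one-dimensional result to each such slice and then integrating over $\bg_{-j}$ gives the identity. Fubini is again legitimate here, because the one-dimensional bound above gives $\E|\bg_j F(\bg)| \le \E|\bg_j|\,|F(0_j,\bg_{-j})| + \E|\partial_jF|$.

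The main obstacle is this last integrability point: controlling $\E|\bg_j|\,|F(0,\bg_{-j})|$, meaning the slice of $F$ at $x_j=0$. I would handle it by centering at a different point $x_j=c$ rather than $x_j=0$. Because $F\in L^1(\gamma_n)$, the function $c\mapsto \E_{\bg_{-j}}|F(c,\bg_{-j})|$ is finite for almost every $c$, so such a $c$ can be chosen. The one-dimensional argument works verbatim with base point $c$ in place of $0$, using $\int_y^\infty x\varphi=\varphi(y)$ in the same way.
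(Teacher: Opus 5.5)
Your proof is correct. The paper gives no proof of this lemma: it records it as a well-known identity and cites \cite[Appendix~A.3]{chatterjee2014superconcentration}. So your argument is a self-contained replacement for a citation rather than a variant of something in the paper. You write $f-f(c)$ as $\int_c^x f'$, swap the integrals, and use $\int_y^\infty x\varphi(x)\,dx=\varphi(y)$ and $\int_{-\infty}^y x\varphi(x)\,dx=-\varphi(y)$. This is the standard route, and its advantage over naive integration by parts is the one you identify. It needs no decay of $f$, and it yields integrability of $\bg f(\bg)$ as a byproduct. That matters here, because the lemma is stated for merely differentiable $f$ and for $F$ with only $F,\partial_iF\in L^1(\gamma_n)$.

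\textbf{Your hedge on local integrability is unnecessary.} Since $\varphi$ is bounded below on compact intervals, $f'\in L^1(\gamma_1)$ already gives $f'\in L^1_{\mathrm{loc}}$. An everywhere-differentiable function with locally integrable derivative satisfies $f(x)-f(0)=\int_0^x f'$ (e.g.\ Rudin, \emph{Real and Complex Analysis}, Thm.~7.21). So the hypotheses suffice exactly as stated, and you need not assume absolute continuity.

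\textbf{The ``obstacle'' in the $n$-dimensional step is milder than you suggest.} For any fixed $c$,
\[
|F(0,g_{-j})|\le |F(c,g_{-j})|+\int_{I_c}|\partial_jF(y,g_{-j})|\,dy,
\]
where $I_c$ is the interval between $0$ and $c$. The last term has $\gamma_{n-1}$-integral at most $\sup_{I_c}\varphi^{-1}\cdot\Ex|\partial_jF(\bg)|$. So once one slice is integrable, which $F\in L^1(\gamma_n)$ guarantees for a.e.\ $c$, all slices are, and base point $0$ already works.

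\textbf{If you keep a base point $c\neq 0$, the Fubini bound is not quite verbatim.} For $y<0$ one has $\int_y^\infty|x|\varphi(x)\,dx=2\varphi(0)-\varphi(y)$ rather than $\varphi(y)$. This is still bounded by $2\varphi(0)$ on the compact stretch $I_c$. That gives $\Ex|\bg(f(\bg)-f(c))|\le C_c\,\Ex|f'(\bg)|$, which is all your integrability argument needs.
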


Finally, we will require the following concentration inequality for Lipschitz functions of Gaussian random variables.

\begin{proposition}[Theorem~5.2.2 of~\cite{vershynin2018high}]
\label{prop:lipschitz-concentration}
	Suppose $f:\R^n\to\R$ is an $L$-Lipschitz function. Then 
	\[
		\Prx_{\bg\sim N(0,I_n)}\sbra{\abs{f(\bg) - \Ex[f]} \geq t} \leq 2\exp\pbra{-\frac{t^2}{2L^2}}.
	\]
\end{proposition}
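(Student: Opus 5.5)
I would prove \Cref{prop:lipschitz-concentration} by the Herbst argument: first establish the sub-Gaussian moment generating function bound
\[
	\Ex\sbra{e^{\lambda(f(\bg)-\Ex[f])}}\leq e^{\lambda^2L^2/2}\qquad\text{for all }\lambda\in\R\,,
\]
then apply Chernoff's bound to each tail and take a union bound. The argument rests on the Gaussian logarithmic Sobolev inequality. This has not been stated earlier in the paper, so I would either quote it or derive it as below. For smooth $h$ it reads
\[
	\mathrm{Ent}_{\gamma_n}(h^2):=\Ex[h^2\log h^2]-\Ex[h^2]\log\Ex[h^2]\leq 2\,\Ex\sbra{\|\nabla h\|_2^2}\,.
\]

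\emph{Reduction to smooth $f$.} First reduce to $f\in C^\infty$ with $\|\nabla f\|_2\leq L$ everywhere. Replace $f$ by $f_\delta:=f*\varphi_\delta$, the convolution with a Gaussian mollifier $\varphi_\delta$. Then $f_\delta$ is smooth and $L$-Lipschitz, and $\|f_\delta-f\|_\infty\leq L\,\Ex\|\delta\bz\|_2\to 0$ as $\delta\to0$. So the tail probabilities and the means converge, and the bound passes to the limit. A Lipschitz $f$ has linear growth, so all exponential moments are finite and differentiation under the integral is justified.

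\emph{Herbst's argument.} Fix $\lambda$ and apply the log-Sobolev inequality to $h=e^{\lambda f/2}$. Since $\nabla h=\tfrac{\lambda}{2}h\nabla f$, writing $H(\lambda):=\Ex[e^{\lambda f}]$ gives
\[
	\lambda H'(\lambda)-H(\lambda)\log H(\lambda)\leq \frac{\lambda^2}{2}\Ex\sbra{\|\nabla f\|_2^2e^{\lambda f}}\leq \frac{\lambda^2L^2}{2}H(\lambda)\,.
\]
Set $K(\lambda):=\lambda^{-1}\log H(\lambda)$. Dividing by $\lambda^2H$ turns the display into $K'(\lambda)\leq L^2/2$. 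As $\lambda\to0^+$ we have $K(\lambda)\to\Ex[f]$, so integrating gives $\log H(\lambda)\leq\lambda\Ex[f]+\lambda^2L^2/2$ for $\lambda>0$. Applying the same bound to $-f$ handles $\lambda<0$.

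\emph{Tail bound.} Markov's inequality gives $\Pr[f-\Ex f\geq t]\leq e^{-\lambda t+\lambda^2L^2/2}$. Optimizing at $\lambda=t/L^2$ yields $e^{-t^2/(2L^2)}$. The symmetric tail is identical, and together they give the factor $2$.

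\emph{Main obstacle: the log-Sobolev inequality.} If it is not taken as known, I would prove it by the Ornstein--Uhlenbeck semigroup. Let $P_s h(x)=\Ex[h(e^{-s}x+\sqrt{1-e^{-2s}}\bg)]$, and apply the argument to $\phi=h^2$. Differentiate $\Ex[P_s\phi\log P_s\phi]$ in $s$ and use Gaussian integration by parts (\Cref{lemma:gaussian-IBP}). This expresses $\mathrm{Ent}(\phi)$ as $\int_0^\infty\Ex\sbra{\|\nabla P_s\phi\|^2/P_s\phi}\,ds$. The commutation $\nabla P_s\phi=e^{-s}P_s\nabla\phi$ and Cauchy--Schwarz give $\|P_s\nabla\phi\|^2\leq P_s\phi\cdot P_s(\|\nabla\phi\|^2/\phi)$. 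Integrating $e^{-2s}$ over $s\in[0,\infty)$ gives $\tfrac12\Ex\sbra{\|\nabla\phi\|^2/\phi}=2\Ex\sbra{\|\nabla h\|^2}$.

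The care needed here is in justifying the interchanges of derivative and expectation and the limits $s\to0,\infty$, which is where I expect most of the effort to go. The Maurey--Pisier interpolation argument would be simpler, but it only gives exponent $2t^2/(\pi^2L^2)$, which is weaker than the stated $t^2/(2L^2)$, so the log-Sobolev route is needed for the constant as stated.
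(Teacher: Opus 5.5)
Your proof is correct, but it is not the paper's route. The paper does not prove \Cref{prop:lipschitz-concentration}; it quotes it from Vershynin's book. There, Theorem~5.2.2 is stated in $\psi_2$ form, $\|f(\bg)-\Ex[f]\|_{\psi_2}\le CL$, with an unspecified absolute constant $C$, and it is obtained from Gaussian isoperimetry. Your Herbst argument on top of the Gaussian log-Sobolev inequality is therefore a self-contained replacement. It also delivers the explicit bound $2e^{-t^2/(2L^2)}$ as written, which the cited $\psi_2$ formulation does not literally give. The difference is harmless for the paper: its uses (the subgaussian moment bound in \Cref{cor:lp} and the width estimate in \Cref{lem:exact-width-polytope}) only need some absolute constant in the exponent. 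Your route is also the mechanism the paper itself invokes in \Cref{lemma:LSI-strongly-log-concave} for $1$-strongly log-concave measures, of which $\gamma_n$ is the case $V(x)=\|x\|_2^2/2$.

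One correction to your closing remark: log-Sobolev is not the only way to get the sharp constant. The Ornstein--Uhlenbeck covariance identity $\Ex[FG]-\Ex[F]\Ex[G]=\int_0^\infty e^{-s}\,\Ex[\nabla F\cdot P_s\nabla G]\,ds$, applied with $F=f$ and $G=e^{\lambda f}$, gives $(\log H)'(\lambda)\le \Ex[f]+\lambda L^2$ for $\lambda\ge 0$ directly. It uses only the commutation $\nabla P_s=e^{-s}P_s\nabla$ and the Gaussian integration by parts you already planned to use, and it avoids the entropy-dissipation computation and its limit interchanges.

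Finally, in the mollification step, do not claim that the tail probabilities themselves converge. Instead, pass the MGF bound to the limit, or apply the smooth tail bound at level $t-2\|f-f_\delta\|_\infty$.
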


\subsection{Gaussian Processes and Sudakov's Minoration}
\label{subsec:prelims-GP} 

Since all Gaussian processes considered here are canonical processes indexed by subsets of Euclidean space, no separability issues arise: for $\bX_t=\bg\cdot t$ the sample paths are continuous in $t$, and hence suprema over $T\sse\R^n$ may be taken over countable dense subsets of $T$. 
Note that for a set $T \sse \R^n$, we have $w(T) \geq 0$ with  equality if and only if $|T| = 1$. 
The reverse direction is clear; to see the forward direction, note that if $s, t \in T$ with $s \neq t$ then 
\[
	w(T) \geq \Ex_{\bg\sim N(0, I_n)}\sbra{\max\cbra{\bg\cdot s, \bg\cdot t}} = \frac{1}{2}\Ex_{\bg\sim N(0, I_n)}\sbra{\abs{\bg\cdot (s-t)}} > 0\,,
\]
Additionally, $w(T)$ is finite if and only if $T$ is bounded. 

\begin{lemma}[Sudakov's minoration; Theorem~7.4.1 of~\cite{vershynin2018high}] 
\label{lem:sudakov}
	Let $T \sse \R^n$ and suppose $S \sse T$ such that for every $s, s' \in S$, we have $\|s-s'\|_2 \geq \eps$.	
	Then we have 
	\[
		w(T) = \Omega\pbra{\eps \sqrt{\log |S|}}\,. 
	\]
\end{lemma}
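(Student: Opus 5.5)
The plan is to reduce to a comparison with independent Gaussians via the Sudakov--Fernique inequality. First, some reductions. Since $S\sse T$, monotonicity of the supremum gives $w(T)\geq w(S)$. If $|S|=1$ the bound is trivial, since $\log|S|=0$. If $S$ is infinite, we apply the finite case to finite subsets $S'\sse S$ of every size. This gives $w(T)=\infty$, which agrees with the statement under the convention that it then asserts an unbounded lower bound. So we may assume $S$ is finite with $N:=|S|\geq 2$, and it suffices to show $\Ex[\max_{s\in S}\bg\cdot s]\gtrsim \eps\sqrt{\log N}$.

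Next, set up the comparison process. Let $\{\bh_s\}_{s\in S}$ be i.i.d.\ $N(0,1)$ and put $\bY_s:=(\eps/\sqrt2)\,\bh_s$. Then for $s\neq s'$,
\[
\Ex\sbra{(\bg\cdot s-\bg\cdot s')^2}=\|s-s'\|_2^2\geq \eps^2=\Ex\sbra{(\bY_s-\bY_{s'})^2}.
\]
The Sudakov--Fernique inequality then yields $\Ex[\max_s \bg\cdot s]\geq \Ex[\max_s\bY_s]=(\eps/\sqrt2)\,\Ex[\max_{i\leq N}\bh_i]$.

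The remaining elementary estimate is $\Ex[\max_{i\le N}\bh_i]\geq c\sqrt{\log N}$ for $N\geq 2$. For large $N$ we argue as follows. Write $\max_i \bh_i\geq \bh_1-\max_i(\bh_i)_-$, so
\[
\Ex\sbra{\max_i \bh_i}\geq u\cdot \Pr\sbra{\max_i\bh_i\geq u}-\Ex\sbra{(\bh_1)_-}\quad\text{up to handling the negative part.}
\]
Take $u=\sqrt{\log N}$. The standard tail bound $\Pr[\bh\geq u]\gtrsim e^{-u^2/2}/u$ gives $\Pr[\max_i\bh_i<u]=(1-\Pr[\bh\ge u])^N\leq \exp(-N^{1/2}/O(\sqrt{\log N}))$, which is small. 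Hence $\Ex[\max_i\bh_i]\geq u/2-O(1)$. Small $N$ (including $N=2$, where $\Ex\max=1/\sqrt\pi$) is absorbed into the constant. This bookkeeping is routine.

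The main step is the Sudakov--Fernique inequality itself, which I would prove by Gaussian interpolation using \Cref{lemma:gaussian-IBP}. Take $\bX$ and $\bY$ independent and set $\bZ(\tau)=\sqrt{\tau}\,\bX+\sqrt{1-\tau}\,\bY$. Replace the max by the softmax $F_\beta(x)=\beta^{-1}\log\sum_s e^{\beta x_s}$, which is within $\beta^{-1}\log N$ of the max. Gaussian integration by parts gives
\[
\frac{d}{d\tau}\Ex F_\beta(\bZ(\tau))=\frac12\sum_{s,s'}\pbra{\Ex[\bX_s\bX_{s'}]-\Ex[\bY_s\bY_{s'}]}\,\Ex\sbra{\partial_{ss'}F_\beta}.
\]
Using $\partial_{ss'}F_\beta=\beta(p_s\delta_{ss'}-p_sp_{s'})$ with softmax weights $p$, and the fact that the rows of this Hessian sum to zero, the expression rewrites as
\[
-\frac{\beta}{4}\sum_{s\neq s'}\pbra{d_X(s,s')^2-d_Y(s,s')^2}\,\Ex[p_sp_{s'}],
\]
where $d_X,d_Y$ are the canonical $L^2$ distances of the two processes. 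Since $d_X\geq d_Y$ pointwise, this is nonpositive. Hence $\Ex F_\beta(\bX)\le$\,... wait, we need the direction right: the derivative is $\leq 0$, so $\Ex F_\beta(\bZ(1))\leq\Ex F_\beta(\bZ(0))$? With $\bZ(1)=\bX$ this would give the wrong direction. I would therefore run the interpolation with $\bZ(\tau)=\sqrt{\tau}\,\bY+\sqrt{1-\tau}\,\bX$, whose derivative is the negative of the above and hence $\geq 0$ when read as moving from $\bX$ toward $\bY$. Care with this sign is the only delicate point, and it must come out as $\Ex F_\beta(\bX)\geq\Ex F_\beta(\bY)$. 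Finally, letting $\beta\to\infty$ completes the argument.
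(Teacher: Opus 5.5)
\paragraph{Verdict: correct route, but the key sign step fails.} Your route is the standard proof behind the cited Theorem~7.4.1 of Vershynin; the paper does not reprove the lemma. That proof uses a Sudakov--Fernique comparison with i.i.d.\ $N(0,\eps^2/2)$ variables, plus the bound $\mathbb{E}\max_{i\le N}h_i\gtrsim\sqrt{\log N}$. Your reductions, your interpolation formula $\frac{d}{d\tau}\mathbb{E}F(Z(\tau))=\frac12\sum_{s,s'}\Delta_{ss'}\,\mathbb{E}\,\partial_{ss'}F$, and your estimate on the expected maximum are all fine. (Your displayed bound there already holds as written, since $(\max_i h_i)_-\leq(h_1)_-$.)

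The real problem is the step you flag yourself: the sign in your rewriting of the derivative is wrong. Set $\Delta:=\mathrm{Cov}(X)-\mathrm{Cov}(Y)$. The Hessian $\partial_{ss'}F_\beta=\beta(p_s\delta_{ss'}-p_sp_{s'})$ has zero row and column sums, so $\sum_{s,s'}\Delta_{ss}\partial_{ss'}F_\beta=\sum_{s,s'}\Delta_{s's'}\partial_{ss'}F_\beta=0$. Hence
\[
\sum_{s,s'}\Delta_{ss'}\,\partial_{ss'}F_\beta
=-\frac12\sum_{s,s'}\partial_{ss'}F_\beta\,\bigl(d_X(s,s')^2-d_Y(s,s')^2\bigr)
=\frac{\beta}{2}\sum_{s\neq s'}p_sp_{s'}\bigl(d_X(s,s')^2-d_Y(s,s')^2\bigr).
\]
The sign is positive because the off-diagonal entries $-\beta p_sp_{s'}$ are \emph{negative}. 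So for $Z(\tau)=\sqrt{\tau}\,X+\sqrt{1-\tau}\,Y$ the derivative is $+\frac{\beta}{4}\sum_{s\neq s'}(d_X^2-d_Y^2)\,\mathbb{E}[p_sp_{s'}]\geq 0$. Since $Z(1)=X$ and $Z(0)=Y$, this gives $\mathbb{E}F_\beta(X)\geq\mathbb{E}F_\beta(Y)$ directly. As a sanity check, take $Y\equiv 0$ and $X=(\xi,-\xi)$: then $\mathbb{E}F_\beta(X)>F_\beta(0)$, so the derivative cannot be nonpositive.

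\paragraph{Why reversing the path does not help.} Your proposed ``fix'' of reversing the interpolation cannot work. The quantity $\mathbb{E}F_\beta(X)-\mathbb{E}F_\beta(Y)$ is a fixed number, and reparametrizing the path flips both the sign of the derivative and the roles of the endpoints. With the sign you computed, the reversed path has derivative $\geq 0$ while moving from $X$ (at $\tau=0$) to $Y$ (at $\tau=1$). That yields $\mathbb{E}F_\beta(Y)\geq\mathbb{E}F_\beta(X)$, which is again the wrong inequality. Your closing sentence simply asserts the desired direction, so as written the comparison step is unproved, and your own computation contradicts it. Once the sign is corrected as above, the rest of your argument goes through, including letting $\beta\to\infty$ via $|F_\beta-\max|\leq\beta^{-1}\log N$.
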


Since every maximal $\eps$-separated (in the sense of \Cref{lem:sudakov}) set $S \sse T$ is an $\eps$-net of $T$, it follows from \Cref{lem:sudakov} that $T$ has an $\eps$-net of size 
\begin{equation} \label{eq:sudakov-net-size}
	\exp\pbra{O\pbra{\frac{w(T)^2}{\eps^2}}}\,.
\end{equation}

\subsection{The Brascamp--Lieb Inequality}
\label{subsec:BL}

Our proof of \Cref{thm:GP-sparsification} will crucially rely on the Brascamp--Lieb inequality~\cite{BrascampLieb:76}: 

\begin{lemma}[Brascamp--Lieb variance inequality] \label{thm:BL}
	Let $\mu$ be a probability measure on $\R^n$ with density $d\mu(x) \propto e^{-V(x)}\,dx$ where $V\in C^2$ and $\nabla^2 V(x)\succeq I$ for every $x\in\R^n$. 
	Then, for any differentiable function $f:\R^n\to\R$, we have
	\[
		\Varx_{\bx\sim\mu}\sbra{f(\bx)}
		\leq 
		\Ex_{\bx\sim\mu}\sbra{\|\nabla f(\bx)\|_2^2}\,.
	\]
	In particular, for every $u\in\R^n$,
	\[
		\Varx_{\bx\sim\mu}\sbra{\bx\cdot u}
		\leq 
		\|u\|_2^2\,.
	\]
\end{lemma}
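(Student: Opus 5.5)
The plan is to prove the variance inequality by Hörmander's $L^2$ duality method, using the generator of the diffusion that is reversible for $\mu$, and then to read off the ``in particular'' by specializing to linear $f$.

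\textbf{Setup.} Let
\[
	L u := \Delta u - \nabla V\cdot\nabla u ,
\]
which is symmetric in $L^2(\mu)$ and satisfies the integration-by-parts identity
\[
	\int g\,(-Lu)\,d\mu=\int \nabla g\cdot\nabla u\,d\mu
\]
for nice $g,u$. Two preliminary reductions:
\begin{itemize}
	\item By a truncation and mollification argument, it suffices to treat $f\in C_c^\infty(\R^n)$. If $\Ex_\mu\|\nabla f\|^2=\infty$ there is nothing to prove. Otherwise, approximate $f$ by $\chi_R\cdot(f*\rho_\delta)$ and pass to the limit using Fatou's lemma on the left-hand side and dominated convergence on the right-hand side.
	\item Replacing $f$ by $f-\Ex_\mu f$, we may assume $\Ex_\mu f=0$.
\end{itemize}

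\textbf{Main steps.}
\begin{enumerate}
	\item Solve the Poisson equation $-Lu=f$, or more precisely an approximate version of it; this is justified below.
	\item Use the Bochner-type identity. Integrating $\tfrac12 L\|\nabla u\|^2-\nabla u\cdot\nabla Lu$ against $\mu$ gives
	\[
		\int (Lu)^2\,d\mu=\int \Big(\|\nabla^2u\|_{HS}^2+\nabla u^{\top}\nabla^2V\,\nabla u\Big)\,d\mu\;\geq\;\int\|\nabla u\|_2^2\,d\mu .
	\]
	This is exactly where the hypothesis $\nabla^2V\succeq I$ enters.
	\item Combine the pieces with Cauchy--Schwarz:
	\[
		\int f^2\,d\mu=\int f(-Lu)\,d\mu=\int\nabla f\cdot\nabla u\,d\mu\le\Big(\int\|\nabla f\|^2\,d\mu\Big)^{1/2}\Big(\int\|\nabla u\|^2\,d\mu\Big)^{1/2}\le\Big(\int\|\nabla f\|^2\,d\mu\Big)^{1/2}\Big(\int f^2\,d\mu\Big)^{1/2}.
	\]
	Dividing by $(\int f^2\,d\mu)^{1/2}$ gives $\Var_\mu(f)\le\Ex_\mu\|\nabla f\|^2$.
\end{enumerate}

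\textbf{Main obstacle: the Poisson equation.} The expected difficulty is justifying step 1 on all of $\R^n$, together with the integrations by parts for $u$, which is not compactly supported. I would not solve $-Lu=f$ exactly. Instead I would use the Markov semigroup $P_t=e^{tL}$, which is well defined since $V$ is $C^2$ and uniformly convex, so the diffusion does not explode. Set
\[
	u_T:=\int_0^T P_tf\,dt,\qquad\text{so that}\qquad -Lu_T=f-P_Tf .
\]
Then I would run the argument above for $u_T$ and let $T\to\infty$, using that $P_Tf\to\Ex_\mu f=0$ in $L^2(\mu)$.

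Equivalently, one can use the semigroup form of the argument directly:
\[
	\Var_\mu(f)=2\int_0^\infty\int\|\nabla P_tf\|^2\,d\mu\,dt .
\]
The Bakry--\'Emery condition $\Gamma_2\ge\Gamma$, which is the content of step 2, yields the gradient commutation bound
\[
	\|\nabla P_tf\|^2\le e^{-2t}P_t\|\nabla f\|^2 .
\]
Integrating in $t$ then gives exactly the stated bound, since $\int_0^\infty 2e^{-2t}\,dt=1$.

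\textbf{The ``in particular''.} Take $f(x)=x\cdot u$, so that $\nabla f\equiv u$ and $\Ex_\mu\|\nabla f\|^2=\|u\|_2^2$. This $f$ is not compactly supported, but it lies in $L^2(\mu)$ because $e^{-V}$ has Gaussian tails: $V$ grows at least quadratically by strong convexity. The truncation argument from the setup therefore applies.
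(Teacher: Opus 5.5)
The paper gives no proof of this lemma. It is quoted from Brascamp and Lieb as a black box, and is only ever applied with $f(x)=x\cdot u$ and with the smooth potentials $V_t$ from the main technical lemma.

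Your proposal instead supplies a self-contained proof by the H\"ormander $L^2$/Bakry--\'Emery method, and the outline is correct. The integrated Bochner identity $\int (Lu)^2\,d\mu=\int\big(\|\nabla^2u\|_{HS}^2+\langle\nabla^2V\,\nabla u,\nabla u\rangle\big)\,d\mu$ is exactly where $\nabla^2V\succeq I$ enters. Regularizing the Poisson equation via $u_T=\int_0^TP_tf\,dt$ is the standard way around the boundary terms at infinity. So is the alternative of using $\operatorname{Var}_\mu(f)=2\int_0^\infty\!\int\|\nabla P_tf\|_2^2\,d\mu\,dt$ with $\|\nabla P_tf\|_2^2\le e^{-2t}P_t\|\nabla f\|_2^2$. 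With only $V\in C^2$, the cleanest justification of both the commutation bound and the ergodicity $P_tf\to\int f\,d\mu$ is the synchronous coupling of $dX=\sqrt2\,dB-\nabla V(X)\,dt$. Its derivative flow satisfies $\frac{d}{dt}J_t=-\nabla^2V(X_t)J_t$ and hence contracts at rate $e^{-t}$.

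What the citation buys is the stronger Brascamp--Lieb bound $\operatorname{Var}_\mu(f)\le\int\langle(\nabla^2V)^{-1}\nabla f,\nabla f\rangle\,d\mu$, valid under strict convexity of $V$. Your duality argument in Steps 1--3 actually recovers this stronger bound. Apply Cauchy--Schwarz with the weight $\nabla^2V$ and then use $\int\langle\nabla^2V\,\nabla u,\nabla u\rangle\,d\mu\le\int(Lu)^2\,d\mu$. The semigroup commutation route, by contrast, only sees the uniform lower bound $\nabla^2V\succeq I$ and yields exactly the stated Poincar\'e form.

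One small repair is needed in your reduction for general differentiable $f$. Since $f\in L^2(\mu)$ is not known in advance, the cutoff term $(f*\rho_\delta)\nabla\chi_R$ is not dominated, so \emph{dominated convergence on the right-hand side} does not apply as written. You should first truncate values, $f\mapsto\max(-k,\min(f,k))$, and only then cut off and mollify. This is legitimate because an everywhere-differentiable $f$ with $\nabla f\in L^2(\mu)$ lies in $W^{1,2}_{\mathrm{loc}}$, with weak gradient equal to the pointwise one. The final limit is cleanest via Fatou applied to $\operatorname{Var}_\mu(f)=\tfrac12\iint(f(x)-f(y))^2\,d\mu(x)\,d\mu(y)$, which avoids tracking the moving means. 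None of this affects the linear case the paper actually uses, which your last paragraph handles correctly.
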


%% file: sections/improved-sparsification.tex
\section{Improved Sparsification of Gaussian Processes}
\label{sec:GP-sparsification}

We now prove \Cref{thm:GP-sparsification} by making the interpolation argument sketched in~\Cref{subsec:technical-overview} rigorous.
The main point is to choose shifts along the interpolation so that the corresponding softmax path moves only a small amount in $L^2(\gamma_n)$. 

\subsection{The Principal Technical Lemma}
\label{subsec:collapse-lemma}

The following lemma shows the existence of good shifts that produce small error for a finite index set. 

\begin{lemma} \label{lem:soft-active-clustering}
	Suppose $T \sse \R^n$ is finite and non-empty and let $\eps > 0$. 
	Additionally suppose $S \sse \R^n$ and the map $\Pi : T \to S$ satisfies $\|t - \Pi(t)\|_2 \leq \eps$ for every $t\in T$. 
	For $t \in T$ and $\theta \in [0,1]$, define 
	\[
		u_t := t - \Pi(t) 
		\qquad\text{and}\qquad 
		v_t(\theta) := t - \theta u_t\,. 
	\]
	Finally, fix $\beta > 0$. 
	Then there exist absolutely continuous functions $a_t : [0,1] \to \R$ with $a_t(0) = 0$ such that the softmax function $H_\theta : \R^n \to \R$ defined as 
	\begin{equation} \label{eq:softmax-def}
		H_\theta(g) := \frac{1}{\beta}\log\pbra{\sum_{t\in T} \exp\pbra{\beta\pbra{g\cdot v_t(\theta)+a_t(\theta)}}}
	\end{equation}
	satisfies $\|H_1-H_0\|_{L^2} \leq \eps$. 
\end{lemma}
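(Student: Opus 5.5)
We make the heuristic argument of \Cref{subsubsec:our-approach} rigorous, with softmax weights playing the role of active regions. For a vector of shifts $a=(a_t)_{t\in T}$ and $\theta\in[0,1]$, let $H_\theta^a$ denote the softmax in \Cref{eq:softmax-def}. Define the Gibbs weights
\[
	p_t(\theta,a;g) := \frac{\exp\pbra{\beta\pbra{g\cdot v_t(\theta)+a_t}}}{\sum_{t'\in T}\exp\pbra{\beta\pbra{g\cdot v_{t'}(\theta)+a_{t'}}}}.
\]
These weights are positive and sum to one. Let $\mu_t^{\theta,a}$ be the probability measure with density proportional to $p_t(\theta,a;g)\,d\gamma_n(g)$. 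This is the soft analogue of $\gamma_n$ conditioned on $A_t(\theta)$. We define the shifts through the ODE system
\[
	a_t'(\theta) = F_t(\theta,a) := \Ex_{\bx\sim\mu_t^{\theta,a}}\sbra{\bx\cdot u_t},\qquad a_t(0)=0.
\]

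\textbf{Step 1 (well-posedness).} $T$ is finite, each $p_t\in(0,1)$, and $|g\cdot u_t|$ is Gaussian-integrable. Hence $F_t$ is continuous in $\theta$ and smooth in $a$. Its $a$-derivatives are covariances of bounded or integrable quantities, so they are bounded. More directly, $|F_t|\le \Ex_{\mu_t}|\bx\cdot u_t|$. Because $p_t$ is a ratio, $\mu_t$ has density $p_t/\E_{\gamma_n}[p_t]$, and we need a uniform lower bound on $\E_{\gamma_n}[p_t]$ to keep things bounded. Shifts change $p_t$ by at most a factor $e^{2\beta\|a\|_\infty}$. So on the event of bounded $a$ we get a local Lipschitz bound, and Picard--Lindelöf gives a local solution. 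To get a global solution on $[0,1]$, we use a priori bounds. First, $|F_t|\le \eps\,\E_{\mu_t}\|\bx\|$. Second, $\mu_t$ is strongly log-concave (Step 2), so its mean is controlled, and $\|\bx\|$ has bounded first moment. Alternatively, Brascamp--Lieb plus a mean bound keeps $F_t$ finite. The solution is $C^1$, hence absolutely continuous.

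\textbf{Step 2 (log-concavity).} The density of $\mu_t^{\theta,a}$ is $e^{-V}$ with
\[
	V(g)=\tfrac12\|g\|^2+\log\sum_{t'}e^{\beta(g\cdot v_{t'}+a_{t'})}-\beta(g\cdot v_t+a_t)+\text{const}.
\]
Log-sum-exp is convex and smooth, and the last term is affine. Therefore $V\in C^2$ and $\nabla^2V\succeq I$, and \Cref{thm:BL} gives $\Var_{\mu_t}[\bx\cdot u_t]\le\|u_t\|_2^2\le\eps^2$.

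\textbf{Step 3 (derivative computation).} Differentiating \Cref{eq:softmax-def} in $\theta$ gives
\[
	\tfrac{d}{d\theta}H_\theta(g)=\sum_t p_t(g)\pbra{a_t'(\theta)-g\cdot u_t}.
\]
Jensen's inequality, applied to the convex combination with weights $p_t$, gives
\[
	\pbra{\tfrac{d}{d\theta}H_\theta}^2\le\sum_t p_t\pbra{g\cdot u_t-a_t'}^2.
\]
Taking the Gaussian expectation,
\[
	\Ex\sbra{\pbra{\tfrac{d}{d\theta}H_\theta}^2}\le\sum_t \E_{\gamma_n}[p_t]\cdot\E_{\mu_t}\sbra{(\bx\cdot u_t-a_t')^2}.
\]
By the choice of $a_t'$, each inner term is $\Var_{\mu_t}[\bx\cdot u_t]\le\eps^2$. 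Since $\sum_t\E[p_t]=1$, we get $\|\tfrac{d}{d\theta}H_\theta\|_{L^2}\le\eps$.

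\textbf{Step 4 (integration).} For each $g$, $\theta\mapsto H_\theta(g)$ is absolutely continuous, so
\[
	H_1-H_0=\int_0^1\tfrac{d}{d\theta}H_\theta\,d\theta.
\]
Minkowski's integral inequality then gives $\|H_1-H_0\|_{L^2}\le\eps$. This step requires joint measurability and the integrability of $\partial_\theta H$; both follow from continuity and the bound $|\partial_\theta H|\le\max_t(|a_t'|+|g\cdot u_t|)$.

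The main obstacle is Step 1: showing the ODE solution exists on all of $[0,1]$ rather than blowing up. The plan is to prove $|a_t'|$ is uniformly bounded along any solution. The mean of a $1$-strongly log-concave measure whose potential gradient at $0$ is bounded by $C$ satisfies $\|\E\bx\|\lesssim C+\sqrt n$. Here $C\le\beta\max_t\|v_t\|$, and this bound does not depend on $a$. That gives $|a_t'|\le\eps(\beta\max\|v\|+O(\sqrt n))$ uniformly, which rules out blow-up. The constant is dimension-dependent, but only existence is needed.
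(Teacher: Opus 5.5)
Your proposal is correct and follows essentially the same route as the paper: the same active-centering ODE $a_t'=\Ex_{\bx\sim\mu_t}[\bx\cdot u_t]$, Jensen over the softmax weights, Brascamp--Lieb applied to the $1$-strongly log-concave tilted measures $\mu_t$, and integration in $\theta$. The differences are only technical, in two places.

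First, for global existence of the ODE solution, the paper uses Peano's theorem together with a dimension-free uniform bound $|m_t|\le\eps\beta D$, where $D$ is the maximal diameter of $\{v_r(\theta)\}$. That bound comes from Gaussian integration by parts, $\Ex_{\bg\sim\mu_t}[\bg]=\Ex_{\bg\sim\mu_t}[\nabla_g\log\omega_t(\bg)]$. You instead use Picard--Lindel\"of together with the strong-log-concavity mean estimate $\|\Ex\bx\|\lesssim\|\nabla V(0)\|+\sqrt n$. Your estimate depends on the dimension, but it works equally well here since only existence is needed.

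Second, you obtain $\|H_1-H_0\|_{L^2}\le\eps$ from the pointwise fundamental theorem of calculus plus Minkowski's integral inequality. The paper instead treats $\theta\mapsto H_\theta$ as an absolutely continuous $L^2$-valued curve (\Cref{lem:l2-chain}).
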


\begin{proof}
	We first reduce the goal to bounding the derivative of the softmax interpolation. 
	Suppose for the moment that the shifts $a_t$ have been constructed as absolutely continuous functions with bounded derivatives.
	Then \Cref{lem:l2-chain} implies that the softmax path $\theta\mapsto H_\theta$ is absolutely continuous as an $L^2$-valued curve.
	The same lemma also identifies its $L^2$-derivative with the pointwise derivative for a.e.~$\theta$. 
	Consequently, by the fundamental theorem of calculus, we have 
	\begin{align}
		\|H_1-H_0\|_{L^2}
		&=
		\left\|
			\int_0^1 \frac{d}{d\theta} H_\theta \,d\theta
		\right\|_{L^2}
		\label{eq:main-proof-FTC} \\
		&\leq
		\int_0^1
		\left\|
			\frac{d}{d\theta} H_\theta
		\right\|_{L^2}
		\,d\theta\,.
		\nonumber
	\end{align}
	It therefore suffices to construct absolutely continuous shifts $a_t:[0,1]\to\R$ with bounded derivatives such that 
	\begin{equation}
		\label{eq:softmax-goal}
		\left\|
			\frac{d}{d\theta}H_\theta
		\right\|_{L^2}
		\leq
		\eps
		\qquad
		\text{for a.e.}~\theta\in[0,1]\,.
	\end{equation}
	
	We will obtain the shifts $a_t$ as the solution to an initial-value ODE.
	The ODE is designed so that, at each time $\theta$, the derivative $a_t'(\theta)$ cancels the average residual $\bg\cdot u_t$ under the law in which the affine piece corresponding to the element $t$ is softly active. 
	
	\paragraph{Step 1: Defining the ODE.} 	
	
	To define the vector field, we first freeze time $\theta$ and plug in an arbitrary shift vector $\alpha := (\alpha_t)_{t \in T} \in \R^{|T|}$.
	For this frozen pair $(\theta,\alpha)$, define the pointwise softmax weights 
	\[
		\omega_t^{\theta,\alpha}(g)
		:=
		\frac{
		\exp\left(\beta(g\cdot v_t(\theta)+\alpha_t)\right)
		}{
		\sum_{r\in T}
		\exp\left(\beta(g\cdot v_r(\theta)+\alpha_r)\right)
		}\,.
	\]
	Note that $\omega_t^{\theta,\alpha} \geq 0$ and $\sum_{t \in T} \omega_t^{\theta,\alpha} = 1$. 
	Let 
	\[
		\rho_t(\theta,\alpha) := \Ex_{\bg\sim N(0, I_n)}\sbra{\omega_t^{\theta,\alpha}(\bg)} > 0
		\qquad\text{and}\qquad 
		m_t(\theta,\alpha) := \frac{1}{\rho_t(\theta,\alpha)}\Ex_{\bg\sim N(0, I_n)}\sbra{\omega_t^{\theta,\alpha}(\bg)(\bg\cdot u_t)}\,.
	\]
	Note that $\rho_t(\theta,\alpha)$ is the average soft-active mass of the piece corresponding to $t$, and $m_t(\theta,\alpha)$ is the average residual $\bg\cdot u_t$ conditioned on that piece being (softly) active. 
	Equivalently, if
	\begin{equation} \label{eq:mu-def}
		d\mu_t^{\theta,\alpha}(g)
		:=
		\frac{\omega_t^{\theta,\alpha}(g)}{\rho_t(\theta,\alpha)}
		\,d\gamma_n(g)\,,
		\qquad\text{then}\qquad 
		m_t(\theta,\alpha)
		=
		\Ex_{\bg\sim\mu_t^{\theta,\alpha}}\sbra{\bg\cdot u_t}\,.
	\end{equation}
	
	We now define the actual shift functions $a_t$ through the following system of ordinary differential equations.
	Writing $\bar a(\theta):=(a_t(\theta))_{t\in T}\in\R^{|T|}$ for the vector of shifts at time $\theta$, we set 
	\begin{equation} \label{eq:intercept-ode}
		a_t'(\theta)=m_t(\theta,\bar{a}(\theta))\,,
		\qquad
		a_t(0)=0\,,
		\qquad 
		t \in T\,. 
	\end{equation}
	Note that the derivative with respect to $\theta$ of $g\cdot v_t(\theta)+a_t(\theta)$ is $-g\cdot u_t+a_t'(\theta)$.
	The ODE chooses $a_t'(\theta)=m_t(\theta,\bar{a}(\theta))$, so this derivative has mean zero under \smash{$\mu_t^{\theta,\bar{a}(\theta)}$} (in particular, this is the softmax analogue of the active-centering rule from \Cref{subsubsec:our-approach}).

	\paragraph{Step 2: Global solvability of the shift ODE.}
	
	We now justify that the coupled ODE defining the shifts has a solution on the whole interval $[0,1]$ (we do not require uniqueness). 
	We use the following standard consequence of Peano's existence theorem~\cite[Theorem 2.19]{Tes12}: a continuous bounded vector field $F:[0,1]\times\R^N \to \R^N$ admits a global absolutely continuous solution to every initial-value problem on $[0,1]$. 
	Indeed, Peano gives a local solution, while boundedness of $F$ makes every solution Lipschitz and hence prevents finite-time blowup; this is recorded in the appendix as \Cref{lem:bounded-ode}.
	We check that the vector field 
	\[
		F:[0,1]\times\R^{|T|}\to\R^{|T|},
		\qquad
		F_t(\theta,\alpha)
		:=
		m_t(\theta,\alpha)
	\]
	is continuous and uniformly bounded.
	In particular, \Cref{eq:intercept-ode} is the initial-value problem $\bar{a}'(\theta)=F(\theta,\bar{a}(\theta))$ with initial condition $\bar{a}(0)=0$.
	Here a solution means an absolutely continuous path $\bar{a}:[0,1]\to\R^{|T|}$ whose derivative satisfies the equation for a.e.~$\theta$. 
	
	\begin{itemize}
		\item \textbf{Continuity.}  
		We first show that $F$ is continuous.
		For each fixed $g \in \R^n$, the quantity $\omega_t^{\theta,\alpha}(g)$ is continuous in $(\theta,\alpha)$.
		Moreover,
		\[
			0\leq \omega_t^{\theta,\alpha}(g)\leq 1,
			\qquad
			\abs{\omega_t^{\theta,\alpha}(g)(g\cdot u_t)}
			\leq
			\|u_t\|_2\cdot\|g\|_2
			\leq
			\eps\|g\|_2\,.
		\]
		Since $\|g\|_2$ is integrable under $\gamma_n$, dominated convergence shows that both
		\[
			(\theta,\alpha)
			\mapsto
			\Ex_{\bg\sim N(0,I_n)}\sbra{\omega_t^{\theta,\alpha}(\bg)}
			\qquad\text{and}\qquad 
			(\theta,\alpha)
			\mapsto
			\Ex_{\bg\sim N(0,I_n)}\sbra{\omega_t^{\theta,\alpha}(\bg)(\bg\cdot u_t)}
		\]
		are continuous.
		Also $\rho_t(\theta,\alpha)>0$, since $\omega_t^{\theta,\alpha}(g)>0$ pointwise.
		Hence $m_t(\theta,\alpha)$, and therefore $F_t(\theta,\alpha)$, is continuous.

		\item \textbf{Boundedness.} 
		We next prove a bound on $F$ that is uniform over both $\theta$ and $\alpha$.
		Fix $(\theta,\alpha)$ and $t\in T$.
		Recall that by the definition of $\mu_t^{\theta,\alpha}$ (\Cref{eq:mu-def}),
		\[
			m_t(\theta,\alpha)
			=
			\Ex_{\bg\sim\mu_t^{\theta,\alpha}}\sbra{\bg\cdot u_t}\,.
		\]
		Thanks to Gaussian integration by parts (\Cref{lemma:gaussian-IBP}), applied to the $j^\text{th}$ coordinate after conditioning on the other coordinates, we have 
		\begin{align*}
			\Ex_{\bg\sim\mu_t^{\theta,\alpha}}\sbra{\bg_j}
			&=
			\frac{1}{\rho_t(\theta,\alpha)}
			\Ex_{\bg\sim N(0,I_n)}\sbra{\bg_j \omega_t^{\theta,\alpha}(\bg)}
			\\
			&=
			\frac{1}{\rho_t(\theta,\alpha)}
			\Ex_{\bg\sim N(0,I_n)}\sbra{\frac{\partial}{\partial \bg_j} \omega_t^{\theta,\alpha}(\bg)}
			\\
			&=
			\Ex_{\bg\sim\mu_t^{\theta,\alpha}}\sbra{\frac{\partial}{\partial \bg_j}\log \omega_t^{\theta,\alpha}(\bg)}\,.
		\end{align*}
		In particular, we have 
		\smash{$
			\Ex_{\bg\sim\mu_t^{\theta,\alpha}}\sbra{\bg}
			=
			\Ex_{\bg\sim\mu_t^{\theta,\alpha}}\sbra{\nabla_g\log \omega_t^{\theta,\alpha}(\bg)}
		$}.
		A direct differentiation of the softmax weights gives
		\[
			\nabla_g\log \omega_t^{\theta,\alpha}(g)
			=
			\beta\pbra{
			v_t(\theta)
			-
			\sum_{r\in T}\omega_r^{\theta,\alpha}(g)v_r(\theta)
			}\,.
		\]
		The vector $\sum_{r\in T}\omega_r^{\theta,\alpha}(g)v_r(\theta)$ is a convex combination of the points $\cbra{v_r(\theta):r\in T}$.
		Write
		\[
			D_\theta
			:=
			\diam\cbra{v_r(\theta):r\in T},
			\qquad
			D
			:=
			\sup_{0\leq\theta\leq 1}D_\theta\,.
		\]
		Since $T$ is finite and the paths $\theta\mapsto v_r(\theta)$ are continuous, $D<\infty$.
		Hence, 
		\[
			\left\|
			\nabla_g\log \omega_t^{\theta,\alpha}(g)
			\right\|_2
			\leq
			\beta D\,,
			\qquad\text{and so}\qquad 
			\bigg\|
			\Ex_{\bg\sim\mu_t^{\theta,\alpha}}\sbra{\bg}
			\bigg\|_2
			\leq
			\beta D\,.
		\]
		Since $m_t(\theta,\alpha)=u_t\cdot\Ex_{\bg\sim\mu_t^{\theta,\alpha}}\sbra{\bg}$, we get
		\[
			\abs{m_t(\theta,\alpha)}
			\leq
			\|u_t\|_2 
			\cdot 
			\left\|
			\Ex_{\bg\sim\mu_t^{\theta,\alpha}}\sbra{\bg}
			\right\|_2
			\leq
			\eps\beta D\,.
		\]
		Consequently,
		\[
			\|F(\theta,\alpha)\|_2
			\leq
			|T|^{1/2}\eps\beta D
			\qquad
			\text{for all }(\theta,\alpha)\in[0,1]\times\R^{|T|}\,.
		\]
	\end{itemize}
	Thus $F$ is continuous and uniformly bounded.
	Applying \Cref{lem:bounded-ode} with $M=|T|^{1/2}\eps\beta D$ gives a global absolutely continuous solution $\bar{a}:[0,1]\to\R^{|T|}$ of
	\[
		\bar{a}'(\theta)=F(\theta,\bar{a}(\theta)),
		\qquad
		\bar{a}(0)=0\,.
	\]
	Moreover,
	\[
		\|\bar{a}'(\theta)\|_2
		\leq
		|T|^{1/2}\eps\beta D
		\qquad
		\text{for a.e. }\theta\in[0,1]\,.
	\]
	In particular, each scalar derivative $a_t'$ is bounded a.e.
	Together with the bounded derivatives of the paths $v_t(\theta)$, this is exactly the regularity needed to apply \Cref{lem:l2-chain} to the softmax path $H_\theta$, and hence to justify the fundamental theorem of calculus step in \Cref{eq:main-proof-FTC}.

	\paragraph{Step 3: Rewriting the centered derivative.} 
	
	The rest of the proof is the softmax analogue of the overview from \Cref{subsec:technical-overview}. 
	The only new feature is that the softmax weights form a mixture rather than a hard partition, so the exact partition identity becomes an (application of Jensen's) inequality. 
	
	We define $H_\theta$ as in~\Cref{eq:softmax-def} using $\bar{a}(\theta) = (a_t(\theta))_{t \in T}$.  
	Fix $\theta$ in the full-measure subset of $[0,1]$ on which \Cref{eq:intercept-ode} holds and on which the derivative formula from \Cref{lem:l2-chain} applies.
	Then
	\begin{equation} \label{eq:H-derivative-raw}
		\frac{d}{d\theta}H_\theta(g)
		=
		\sum_{t\in T}
		\omega_t^{\theta,\bar{a}(\theta)}(g)
		\left(-g\cdot u_t+a_t'(\theta)\right)\,.
	\end{equation}
	In particular, thanks to \Cref{eq:intercept-ode}, we can write the above as 
	\begin{equation} \label{eq:H-derivative-centered}
		\frac{d}{d\theta}H_\theta(g)
		=
		-\sum_{t \in T} \omega_t^{\theta,\bar{a}(\theta)}(g)\pbra{g\cdot u_t - m_t(\theta, \bar{a}(\theta))}\,. 
	\end{equation} 

	To lighten notation, write
	\[
		\omega_t(g):=\omega_t^{\theta,\bar{a}(\theta)}(g)\,,
		\qquad
		\rho_t:=\rho_t(\theta,\bar{a}(\theta))\,,
		\qquad
		m_t:=m_t(\theta,\bar{a}(\theta))\,,
		\qquad 
		\mu_t=\mu_t^{\theta,\bar{a}(\theta)}\,.
	\]
	For each fixed $g$, recall that the weights $\omega_t(g)$ are non-negative and sum to one. 
	Thus, applying Jensen's inequality to the convex function $x\mapsto x^2$, \Cref{eq:H-derivative-centered} gives
	\begin{equation} 
		\pbra{\frac{d}{d\theta}H_\theta(g)}^2
		=
		\pbra{
		\sum_{t\in T}\omega_t(g)\pbra{g\cdot u_t-m_t}
		}^2
		\leq 
		\sum_{t\in T}
		\omega_t(g)\pbra{g\cdot u_t-m_t}^2\,.
		\label{eq:weighted-jensen-softmax}
	\end{equation}
	Integrating \Cref{eq:weighted-jensen-softmax} over $\bg\sim N(0,I_n)$ gives
	\begin{align}
		\Ex_{\bg\sim N(0,I_n)}
		\sbra{
		\pbra{\frac{d}{d\theta}H_\theta(\bg)}^2
		}
		&\leq
		\sum_{t\in T}
		\Ex_{\bg\sim N(0,I_n)}
		\sbra{
		\omega_t(\bg)\pbra{\bg\cdot u_t-m_t}^2
		}\,.
		\label{eq:jensen-soft-active}
	\end{align}
	By the definition of $\mu_t$ in \Cref{eq:mu-def}, specialized to our fixed $\theta$ and $\bar a(\theta)$,
	\[
		d\mu_t(g)
		=
		\frac{\omega_t(g)}{\rho_t}\,d\gamma_n(g)\,.
	\]
	Hence
	\[
		\Ex_{\bg\sim N(0,I_n)}
		\sbra{
		\omega_t(\bg)\pbra{\bg\cdot u_t-m_t}^2
		}
		=
		\rho_t
		\cdot 
		\Ex_{\bg\sim\mu_t}
		\sbra{
		\pbra{\bg\cdot u_t-m_t}^2
		}\,.
	\]
	Moreover, recall that $m_t = \Ex_{\bg\sim\mu_t}\sbra{\bg\cdot u_t}$.
	Therefore, 
	\begin{align}
		\sum_{t\in T}
		\Ex_{\bg\sim N(0,I_n)}
		\sbra{
		\omega_t(\bg)\pbra{\bg\cdot u_t-m_t}^2
		}
		&=
		\sum_{t\in T}
		\rho_t 
		\cdot 
		\Varx_{\bg\sim\mu_t}\sbra{\bg\cdot u_t}\,.
		\label{eq:variance-mixture}
	\end{align}
	
	\paragraph{Step 4: Controlling the derivative using Brascamp--Lieb.} 

	It remains to bound the variances in \Cref{eq:variance-mixture}.
	Fix $t\in T$ and recall that $d\mu_t(g) \propto \omega_t(g)\, d\gamma_n(g)$. 
	Therefore, up to an additive constant, the negative log-density of $\mu_t$ is
	\[
		V_t(g)
		=
		\frac12\|g\|_2^2
		-
		\beta\pbra{g\cdot v_t(\theta)+a_t(\theta)}
		+
		\log\pbra{
		\sum_{t'\in T}
		\exp\pbra{\beta\pbra{g\cdot v_{t'}(\theta)+a_{t'}(\theta)}}
		}\,.
	\]
	Since the middle term is affine in $g$, and since log-sum-exp is convex, we have
	\[
		\nabla^2 V_t(g)
		=
		I
		+
		\nabla^2
		\log\pbra{
		\sum_{t'\in T}
		\exp\pbra{\beta\pbra{g\cdot v_{t'}(\theta)+a_{t'}(\theta)}}
		}
		\succeq I\,.
	\]
	Applying the Brascamp--Lieb inequality from \Cref{thm:BL}, we obtain
	$
		\Varx_{\bg\sim\mu_t}\sbra{\bg\cdot u_t}
		\leq
		\|u_t\|_2^2
		\leq
		\eps^2
	$.
	Combining this with \Cref{eq:jensen-soft-active,eq:variance-mixture} and $\sum_{t\in T}\rho_t=1$, we get
	\[
		\Ex_{\bg\sim N(0,I_n)}
		\sbra{
		\pbra{\frac{d}{d\theta}H_\theta(\bg)}^2
		}
		\leq 
		\sum_{t\in T}\rho_t \cdot \eps^2
		=
		\eps^2\,.
	\]
	Since this holds for every $\theta$ in the full-measure set fixed above, this establishes \Cref{eq:softmax-goal} and in turn completes the proof of \Cref{lem:soft-active-clustering}. 
\end{proof} 

\subsection{Proof of~\Cref{thm:GP-sparsification}}
\label{subsec:putting-it-all-together} 

We now turn to the proof of our main result: 

\gpsparsificationthm*

\begin{proof}[Proof of~\Cref{thm:GP-sparsification}]
	We first prove the theorem for finite $T \sse \R^n$, from which the result for bounded $T \sse \R^n$ follows via a standard limiting argument. 
	Note that if $w(T)=0$ then the result is trivial (see \Cref{subsec:prelims-GP}). 
	We therefore assume that $w(T)>0$. 
	Set
	\[
		r
		:=
		\frac{\eps}{2}
		\cdot
		w(T)\,.
	\]
	Let $S\sse T$ be a maximal $r$-separated set, and note that $S$ must be an $r$-net of $T$. 
	Applying \Cref{lem:sudakov} to the $r$-separated set $S$ gives
	\[
		w(T)
		\geq
		\Omega\pbra{r\sqrt{\log |S|}}\,,
	\]
	and hence
	\begin{equation} \label{eq:net-size}
		|S|
		\leq
		\exp\pbra{O\pbra{
		\frac{
		w(T)^2
		}{r^2}
		}}
		=
		\exp\pbra{O\pbra{\frac{1}{\eps^2}}}\,.
	\end{equation}
	Assign each $t \in T$ to some $\Pi(t) \in S$ satisfying $\|t - \Pi(t)\|_2 \leq r$. 
	We additionally assume without loss of generality that $\Pi(s) = s$ for every $s \in S$. 
	
	Choose $\beta > 0$ so that $2\log |T|/\beta \leq r$.
	Applying \Cref{lem:soft-active-clustering} to the map $\Pi:T\to S$ with error parameter $r$ and this value of $\beta$ gives shifts $a_t:[0,1]\to\R$ and the corresponding softmax interpolation $H_\theta$.
	Since $a_t(0)=0$ and $v_t(0)=t$, we have
	\[
		H_0(g)
		=
		\frac{1}{\beta}\log\pbra{\sum_{t\in T}e^{\beta(g\cdot t)}}\,,
		\qquad\text{and so}\qquad 
		\max_{t\in T} g\cdot t
		\leq 
		H_0(g)
		\leq 
		\max_{t\in T} g\cdot t+\frac{\log |T|}{\beta}\,.
	\]
	We similarly have 
	\[
		\max_{t\in T}\{g\cdot \Pi(t)+a_t(1)\}
		\leq
		H_1(g)
		\leq
		\max_{t\in T}\{g\cdot \Pi(t)+a_t(1)\}+\frac{\log |T|}{\beta}\,.
	\]
	Since $\|H_1-H_0\|_{L^2}\leq r$ by \Cref{lem:soft-active-clustering}, the triangle inequality gives
	\[
		\Ex_{\bg\sim N(0, I_n)}\sbra{\pbra{
		\max_{t\in T}\bg\cdot t
		-
		\max_{t\in T}\{\bg\cdot \Pi(t)+a_t(1)\}
		}^2}^{1/2}
		\leq 
		r+\frac{2\log |T|}{\beta}\,.
	\]
	Set $b_t=a_t(1)$.
	By our choice of $\beta$, we have
	\begin{equation} \label{eq:ahhh}
		\Ex_{\bg\sim N(0, I_n)}\sbra{\pbra{
		\max_{t\in T}\bg\cdot t
		-
		\max_{t\in T}\{\bg\cdot \Pi(t)+b_t\}
		}^2}^{1/2}
		\leq 
		2r
		=
		\eps\cdot w(T)\,. 
	\end{equation}
	
	For each $s \in S$, we define $c_s=\max\cbra{b_t:t\in T,\ \Pi(t)=s}$ and note that 
	\[
		\max_{t\in T}\cbra{g\cdot \Pi(t)+b_t}
		=
		\max_{s\in S}\cbra{g\cdot s+c_s}\,.
	\]
	These shifts are non-negative.
	Indeed, since $\Pi(s)=s$, we have $u_s=s-\Pi(s)=0$, so $m_s\equiv 0$ and the ODE gives $a_s(\theta)\equiv0$.
	Thus $b_s=0$, and since $s \in \Pi^{-1}(s)$ we have $c_s\geq b_s=0$. 
	Combining this with \Cref{eq:ahhh}, we get
	\[
		\Ex_{\bg\sim N(0, I_n)}
		\sbra{
		\pbra{
		\max_{t\in T} \bg\cdot t
		-
		\max_{s\in S}\cbra{\bg\cdot s + c_s}
		}^{2}
		}^{1/2}
		\leq
		\eps\cdot w(T)\,.
	\]
	This proves the theorem for finite $T$.

	We now pass to a general bounded set $T\sse\R^n$. 
	Since bounded subsets of $\R^n$ are separable, choose a countable dense subset $\{t_1,t_2,\ldots\}\sse T$, and set $T_m:=\{t_1,\ldots,t_m\}$.
	For each fixed $g\in\R^n$, the map $t\mapsto g\cdot t$ is continuous, so
	$
		\max_{t\in T_m}g\cdot t
		\uparrow
		\sup_{t\in T}g\cdot t
	$.
	Since $T$ is bounded, the difference is dominated by a constant multiple of $\|g\|_2$.
	Thus dominated convergence gives
	\[
		\left\|
		\sup_{t\in T}\bg\cdot t-\max_{t\in T_m}\bg\cdot t
		\right\|_{L^2}
		\to 0\,.
	\]
	Choose $m$ large enough so that this $L^2$-norm is at most
	\[
		\frac{\eps}{2}
		\cdot
		w(T)\,.
	\]
	Applying the finite case to $T_m$ with error parameter $\eps/2$ gives a set $S\sse T_m\sse T$ with
	\[
		|S|\leq \exp\pbra{O\pbra{\frac{1}{\eps^2}}}
	\]
	and shifts $\{c_s\}_{s\in S}$ such that
	\[
		\left\|
		\max_{t\in T_m}\bg\cdot t
		-
		\max_{s\in S}\cbra{\bg\cdot s+c_s}
		\right\|_{L^2}
		\leq
		\frac{\eps}{2}
		\cdot
		w(T_m)
		\leq
		\frac{\eps}{2}
		\cdot
		w(T)\,.
	\]
	The shifts are non-negative by the finite case.
	The triangle inequality completes the proof for bounded $T$.
\end{proof}

\subsection{Extensions of \Cref{thm:GP-sparsification}}

We record two extensions of our main result that will be useful in \Cref{sec:apps}. 
The first removes the shifts, at the cost of allowing the approximating centered process to be indexed by a new set rather than by a subset of the original index set. 
A similar centered sparsification result was obtained in \cite[Corollary~23]{de2026sparsifying}, and our proof follows their general strategy of simulating the shifts $\{c_s\}$ from \Cref{thm:GP-sparsification} using auxiliary Gaussian directions. 
In our setting, this reduction requires a few additional checks, including a uniform bound on the magnitude of the shifts and preservation of the $L^2$ error.  
We include the full argument for completeness. 

\begin{corollary}[Centered sparsification]
	\label{cor:centered-sparsification}
	Let $0<\eps<1$ and let $T\sse\R^n$ be bounded.  
	There exists an integer $m\geq n$ and a set $U\sse\R^m$ such that 
	\[
		|U|
		\leq
		\exp\pbra{O\pbra{\frac1{\eps^2}}}
		\qquad\text{and}\qquad 
		\left\|
			\sup_{t\in T}\bg\cdot (t,0_{m-n})
			-
			\max_{u\in U}\bg\cdot u
		\right\|_{L^2(\gamma_m)}
		\leq
		\eps \cdot w(T)\,, 
	\]
	where $(t, 0_{m-n}) \in \R^m$ denotes $t \in \R^n$ viewed as an element of $\R^m$ by concatenating with zeros. 
\end{corollary}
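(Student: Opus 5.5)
We first apply \Cref{thm:GP-sparsification} with error parameter $\eps/2$. This gives $S\sse T$ with $|S|\leq\exp(O(1/\eps^2))$ and shifts $c_s\geq 0$ such that
\[
\Big\|\sup_{t\in T}\bg\cdot t-\max_{s\in S}\{\bg\cdot s+c_s\}\Big\|_{L^2}\leq \tfrac{\eps}{2}w(T).
\]
The task is then to simulate the constant shifts by centered Gaussian directions. We introduce $k$ fresh coordinates $\bh\sim N(0,I_k)$, independent of $\bg$, and set $m=n+k$. For each $s$ we take $u_s:=(s,\lambda c_s\,e_1,\dots)$, or more robustly $u_s:=(s,\,c_s z)$ for a suitable fixed vector. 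The cleanest choice follows the strategy of \cite{de2026sparsifying}: let $\bZ:=\max_{i\leq k}\bh_i$, and simulate $c_s$ by the vector $(s,\,(c_s/\E[\bZ])\cdot \mathbf{1}_{\{i\}})$ ranging over all $i\leq k$.

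With this choice,
\[
U:=\{(s,\tfrac{c_s}{\E\bZ}e_i): s\in S,\ i\leq k\},
\qquad
\max_{u\in U}\bg\cdot u=\max_{s}\Big\{\bg\cdot s+\tfrac{c_s}{\E\bZ}\max_i\bh_i\Big\},
\]
provided $\bZ\geq 0$. That event fails only with probability $2^{-k}$; off it, we bound the error crudely. Comparing with $\max_s\{\bg\cdot s+c_s\}$, the difference is at most
\[
\max_s c_s\,\big|\bZ/\E\bZ-1\big|
\]
by the elementary inequality $|\max a-\max b|\leq\max|a-b|$. Since $\Var\bZ\lesssim 1/\log k$ while $\E\bZ\asymp\sqrt{\log k}$, the relative fluctuation $\|\bZ/\E\bZ-1\|_{L^2}\lesssim 1/\log k$ tends to $0$ as $k\to\infty$. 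The bad event contributes a term that is also $o(1)$ in $L^2$ by Cauchy--Schwarz. The size of $U$ is then $|S|\cdot k$, so we must take $k$ only as large as $\exp(O(1/\eps^2))$, which requires the relative fluctuation to be $O(\eps\, w(T)/\max_s c_s)$.

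The main obstacle is therefore a uniform bound on the shifts: we need $\max_s c_s\lesssim w(T)$, independent of $T$ and $n$. To prove it, note that for each $s$ the approximator satisfies $\max_{s'}\{\bg\cdot s'+c_{s'}\}\geq \bg\cdot s+c_s$. Taking expectations gives
\[
c_s\leq \E\Big[\max_{s'}\{\bg\cdot s'+c_{s'}\}\Big]\leq w(T)+\tfrac{\eps}{2}w(T)\leq 2w(T),
\]
using the $L^2$ (hence $L^1$) guarantee and $\E[\bg\cdot s]=0$. With $c_s\leq 2w(T)$, we need $\|\bZ/\E\bZ-1\|_{L^2}\leq \eps/8$, say, together with the negligible bad event. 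This holds once $\log k\gtrsim 1/\eps$, that is, $k=\exp(O(1/\eps))$. Hence $|U|\leq \exp(O(1/\eps^2))\cdot\exp(O(1/\eps))=\exp(O(1/\eps^2))$.

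Finally, the triangle inequality combines the two $\eps/2$ errors. The left process is unchanged by the padding, since $\bg\cdot(t,0)$ depends only on the first $n$ coordinates, which completes the argument. The points to verify carefully are the lower-tail control of $\bZ$ (the event $\bZ<0$, or small $\bZ$), which we will handle by Gaussian concentration, \Cref{prop:lipschitz-concentration}, for the $1$-Lipschitz function $\max_i h_i$.
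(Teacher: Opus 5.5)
Your proposal is correct and follows the paper's route. You apply \Cref{thm:GP-sparsification}, bound the shifts by $O(w(T))$, simulate them with a rescaled maximum of $\exp(O(1/\eps))$ auxiliary Gaussian coordinates (using that the relative fluctuation of that maximum is $\lesssim 1/\log k$), and finish with the triangle inequality.

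Two small differences are worth noting. Your shift bound $c_s\leq \E[F]\leq w(T)+\|F-M\|_{L^1}\leq (1+\eps/2)\,w(T)$ is a simpler substitute for the paper's Markov-inequality argument. Your concern about the auxiliary maximum being negative is unnecessary, because $\max_i \lambda x_i=\lambda\max_i x_i$ for every $\lambda\geq 0$ and all real $x_i$. The paper avoids the issue anyway by using $\max_j|z_j|$ with directions $\pm e_j$.
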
  

\begin{proof} 
	As always, we may assume that $w(T) > 0$. 
	We first apply \Cref{thm:GP-sparsification} with error parameter $\eta:=c\eps$ for a sufficiently small universal constant $c>0$. 
	This gives a set $S\sse T$ and non-negative shifts $\{c_s\}_{s\in S}$ such that
	\[
		|S|
		\leq
		\exp\pbra{O\pbra{\frac1{\eps^2}}}
	\]
	and, writing
	\[
		M(g):=\sup_{t\in T}g\cdot t\,,
		\qquad
		F(g):=\max_{s\in S}\cbra{g\cdot s+c_s}\,,
	\]
	we have $\|M-F\|_{L^2(\gamma_n)} \leq \eta\cdot w(T)$. 
	We first obtain an upper bound on the magnitude of each $c_s$. 
	Fix $s\in S$ and set $Z_s(g):=M(g)-g\cdot s$.
	Since $s\in T$, we have $Z_s(g)\geq0$, and
	\[
		\Ex\sbra{Z_s(\bg)}
		=
		w(T)-\Ex[\bg\cdot s]
		=
		w(T)\,.
	\]
	By Markov's inequality, the event $\{Z_s(\bg)\leq2w(T)\}$ has probability at least $1/2$.
	On this event,
	\[
		F(g)-M(g)
		\geq
		g\cdot s+c_s-M(g)
		=
		c_s-Z_s(g)
		\geq
		c_s-2w(T)\,.
	\]
	Hence
	\[
		\eta^2 w(T)^2
		\geq
		\|M-F\|_{L^2}^2
		\geq
		\frac12\pbra{c_s-2w(T)}_+^2\,.
	\]
	Taking $c$ small enough, this implies
	\begin{equation} \label{eq:centered-shift-bound}
		0\leq c_s\leq 3w(T)
		\qquad\text{for every }s\in S\,.
	\end{equation}
	
	We now simulate each shift by a maximum of auxiliary centered Gaussian variables, following \cite[Corollary~23]{de2026sparsifying}.
	Let $A\geq2$ be an integer to be chosen, let $\bz\sim N(0,I_A)$, and define
	\[
		Z_A(\bz):=\max_{1\leq j\leq A}|\bz_j|\,,
		\qquad
		\mu_A:=\Ex\sbra{Z_A}\,,
		\qquad
		\sigma_A^2:=\Var\sbra{Z_A}\,.
	\]
	For each $s\in S$, put
	\[
		\mathrm{Aux}(s)
		:=
		\cbra{
			\pbra{s,\frac{c_s}{\mu_A}e_j},
			\pbra{s,-\frac{c_s}{\mu_A}e_j}
			:
			j\in[A]
		}
		\sse \R^{n+A}\,,
	\]
	and set
	\[
		U:=\bigcup_{s\in S}\mathrm{Aux}(s)\,.
	\]
	Thus $|U|\leq2A|S|$.
	For $(g,z)\in\R^n\times\R^A$, the centered process indexed by $U$ has supremum
	\[
		\widetilde F(g,z)
		:=
		\max_{u\in U}(g,z)\cdot u
		=
		\max_{s\in S}\cbra{
			g\cdot s+\frac{c_s}{\mu_A}Z_A(z)
		}\,.
	\]
	For every fixed $(g,z)$, the elementary inequality
	$\abs{\max_s x_s-\max_s y_s}\leq\max_s\abs{x_s-y_s}$ gives
	\[
		\abs{F(g)-\widetilde F(g,z)}
		\leq
		\max_{s\in S}c_s\cdot
		\abs{1-\frac{Z_A(z)}{\mu_A}}
		\leq
		3w(T)\cdot
		\abs{1-\frac{Z_A(z)}{\mu_A}}\,,
	\]
	where we used \Cref{eq:centered-shift-bound}.
	Consequently,
	\[
		\|F-\widetilde F\|_{L^2(\gamma_{n+A})}
		\leq
		3w(T)\frac{\sigma_A}{\mu_A}\,.
	\]
	A standard estimate for the maximum of $A$ independent standard Gaussian variables gives
	\[
		\frac{\sigma_A}{\mu_A}
		\leq
		\frac{C}{\log A}
	\]
	for a universal constant $C$ (this bound is used in \cite[Corollary~23]{de2026sparsifying}; alternatively, see \cite[Appendix~A.2]{chatterjee2014superconcentration}).
	Choosing $A=\exp\pbra{\frac{C'}{\eps}}$ with $C'$ sufficiently large gives
	\[
		\|F-\widetilde F\|_{L^2(\gamma_{n+A})}
		\leq
		\frac{\eps}{2}w(T)\,.
	\]
	Since $M(g)=\sup_{t\in T}(g,z)\cdot(t,0_A)$ is independent of $z$, the triangle inequality yields
	\[
		\left\|
			\sup_{t\in T}(\bg,\bz)\cdot(t,0_A)
			-
			\max_{u\in U}(\bg,\bz)\cdot u
		\right\|_{L^2(\gamma_{n+A})}
		\leq
		\pbra{\eta+\frac{\eps}{2}}w(T)
		\leq
		\eps w(T)\,,
	\]
	after decreasing $c$ if necessary.
	Finally,
	\[
		|U|
		\leq
		2A|S|
		\leq
		\exp\pbra{O\pbra{\frac1{\eps}}}
		\exp\pbra{O\pbra{\frac1{\eps^2}}}
		=
		\exp\pbra{O\pbra{\frac1{\eps^2}}},
	\]
	which completes the proof with $m=n+A$.
\end{proof}

We also record an extension of \Cref{thm:GP-sparsification} which gives an $L^p$-error variant.  
The proof will require the following moment bound for strongly log-concave distributions. 

\begin{lemma}[Moment bound for strongly log-concave measures] 
\label{lemma:LSI-strongly-log-concave}
	Let $\mu$ be a probability measure on $\R^n$ with density proportional to $e^{-V}$, where $V\in C^2(\R^n)$ satisfies $\nabla^2 V \succeq I$.
	Then there is a universal constant $C>0$ such that for every $u\in\R^n$
	and every $p\geq2$,
	\[
		\left\|
			x\cdot u-\Ex_{\bx\sim\mu}\sbra{\bx\cdot u}
		\right\|_{L^p(\mu)}
		\leq
		C\sqrt p\,\|u\|_2\,.
	\]
\end{lemma}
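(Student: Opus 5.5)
The plan is to prove the bound through Gaussian concentration for $\mu$: Lipschitz functions under a $1$-strongly log-concave measure have subgaussian fluctuations at scale equal to their Lipschitz constant. The function $f(x) := x\cdot u$ is $\|u\|_2$-Lipschitz, so subgaussian tails at scale $\|u\|_2$ will give the $\sqrt p$ moment growth. The cleanest route is the Bakry--\'Emery criterion, combined with Herbst's argument and then integration of the tails.

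\textbf{Step 1 (Bakry--\'Emery).} I will show that $\mu$ satisfies the log-Sobolev inequality
\[
\mathrm{Ent}_\mu(h^2)\le 2\,\Ex_{\mu}\sbra{\|\nabla h\|_2^2}.
\]
This holds for any measure with $\nabla^2 V\succeq I$. If one wants to avoid general semigroup machinery, one can instead appeal to Caffarelli's contraction theorem. That theorem says $\mu$ is the pushforward of $\gamma_n$ under a $1$-Lipschitz map $\Phi$. Then $f\circ\Phi$ is $\|u\|_2$-Lipschitz on Gaussian space, and \Cref{prop:lipschitz-concentration} applies to it directly. Either route gives the tail bound
\[
\Prx_{\bx\sim\mu}\sbra{\abs{\bx\cdot u-\Ex_\mu[\bx\cdot u]}\ge s}\le 2\exp\pbra{-\frac{s^2}{2\|u\|_2^2}}.
\]
On the log-Sobolev route, the bound comes from Herbst's argument applied to $h=e^{\lambda f/2}$. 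This yields $\Ex_\mu e^{\lambda(f-\Ex f)}\le e^{\lambda^2\|u\|_2^2/2}$, and a Chernoff bound in both directions finishes it.

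\textbf{Step 2 (moments from tails).} I will write $\|\cdot\|_{L^p(\mu)}^p=\int_0^\infty p s^{p-1}\Pr[|\cdot|\ge s]\,ds$. Inserting the tail bound from Step 1 gives at most $2p\,(\sqrt2\|u\|_2)^p\,\Gamma(p/2)/2$. Stirling's bound $\Gamma(p/2)^{1/p}\lesssim\sqrt p$ then yields $C\sqrt p\,\|u\|_2$.

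\textbf{Main obstacle.} The substantive step is Step 1, namely obtaining concentration for a non-Gaussian measure. If a self-contained proof is preferred, I would try the following. Let $F_\lambda := \log\Ex_\mu e^{\lambda f}$ and differentiate it. The second derivative is the variance of $f$ under the tilted measure $d\mu_\lambda\propto e^{\lambda f}d\mu$. The tilt is by a linear function, so $\mu_\lambda$ still has $\nabla^2(V-\lambda f)\succeq I$. \Cref{thm:BL} then gives $\Var_{\mu_\lambda}[f]\le\|u\|_2^2$, hence $F_\lambda''\le\|u\|_2^2$. Integrating twice gives $\log\Ex_\mu e^{\lambda(f-\Ex f)}\le\lambda^2\|u\|_2^2/2$.

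This avoids log-Sobolev entirely and uses only results already in the paper, so I would adopt it as the primary argument. The one technical check is integrability, since differentiation under the integral requires $\Ex_\mu e^{\lambda f}<\infty$ and $\Ex_\mu[f^2e^{\lambda f}]<\infty$ for all $\lambda$. Both hold because $V(x)\ge \|x\|^2/2 - O(\|x\|+1)$, so $e^{-V}$ has Gaussian tails.
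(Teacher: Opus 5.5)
Your proof is correct, and the argument you adopt as primary takes a different route from the paper's.

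The paper takes exactly your first option for Step~1. It uses the Bakry--\'Emery fact that $\nabla^2 V\succeq I$ gives a log-Sobolev inequality for $\mu$, citing van Handel's notes. It then runs Herbst's argument to get subgaussian concentration for every Lipschitz function under $\mu$, specializes to $x\mapsto x\cdot u$, and integrates the tail as in your Step~2.

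Your preferred argument bounds the log-moment generating function directly. You use $F_\lambda''=\mathrm{Var}_{\mu_\lambda}[x\cdot u]$. The tilted measure $\mu_\lambda$ has potential $V-\lambda\, x\cdot u$, which is $C^2$ with the same Hessian, so \Cref{thm:BL} applies verbatim and gives $F_\lambda''\leq\|u\|_2^2$. Integrating twice from $F_0=0$, $F_0'=\mathbb{E}_\mu[x\cdot u]$ yields $\log \mathbb{E}_\mu e^{\lambda(f-\mathbb{E}_\mu f)}\leq \lambda^2\|u\|_2^2/2$.

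The integrability needed to differentiate under the integral sign follows from $V(x)\geq V(0)+\nabla V(0)\cdot x+\|x\|_2^2/2$, as you note. Differentiating twice needs domination that is locally uniform in $\lambda$, not just finiteness at each fixed $\lambda$. That domination follows from the same Gaussian-tail bound, for example via $e^{\lambda' f}\leq e^{(\lambda+\delta)f}+e^{(\lambda-\delta)f}$ for $|\lambda'-\lambda|\leq\delta$.

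\textbf{What your route buys.} It is self-contained, relying only on a lemma the paper already states rather than on external log-Sobolev and Herbst machinery. It also produces an explicit variance proxy of $\|u\|_2^2$.

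\textbf{What it gives up.} The tilt preserves the Hessian lower bound only because $f$ is affine. So your argument does not give concentration for general Lipschitz functions, whereas the log-Sobolev route (or your Caffarelli alternative) does. Since \Cref{cor:lp} only applies the lemma to the linear functionals $g\mapsto g\cdot u_t$, your version is enough for the paper's purposes.
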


\begin{proof}
	Since $\nabla^2V\succeq I$, it follows that $\mu$ satisfies a log-Sobolev inequality with a universal constant; see  \cite[Problem~3.19(d)]{rvh-notes}.  
	We thus get subgaussian concentration for Lipschitz functions under $\mu$, as in the proof of \cite[Theorem~3.25]{rvh-notes}.  
	In particular, for every $L$-Lipschitz function
	$F:\R^n\to\R$,
	\[
		\Prx_{\bx\sim\mu}
		\sbra{
			\abs{F(\bx)-\Ex_\mu[F]}\geq t
		}
		\leq
		2\exp\pbra{
			-\frac{c t^2}{L^2}
		}
	\]
	for a universal constant $c>0$.  Applying this to
	$F(x)=x\cdot u$, whose Lipschitz constant is $\|u\|_2$, gives
	\[
		\Pr_{\bx\sim\mu}
		\sbra{
			\abs{\bx\cdot u-\Ex_\mu\sbra{\bx\cdot u}}\geq t
		}
		\leq
		2\exp\pbra{
			-\frac{c t^2}{\|u\|_2^2}
		}\,.
	\]
	Integrating the tail bound,
	\[
		\Ex_\mu
		\sbra{
			\abs{\bx\cdot u-\Ex_\mu\sbra{\bx\cdot u}}^p
		}
		=
		p\int_0^\infty
		t^{p-1}
		\Pr_\mu
		\sbra{
			\abs{\bx\cdot u-\Ex_\mu\sbra{\bx\cdot u}}\geq t
		}
		\,dt
		\leq
		(C\sqrt p\,\|u\|_2)^p\,.
	\]
	Taking $p^{\text{th}}$ roots proves the lemma. 
\end{proof}

We can now prove the $L^p$-error variant of \Cref{thm:GP-sparsification}: 

\begin{corollary}[$L^p$ sparsification]
\label{cor:lp}
	Let $p\geq2$, let $0<\alpha<1$, and let $T\sse\R^n$ be bounded.  
	Then there exists a subset $S\sse T$ and non-negative shifts $\{c_s\}_{s\in S}$ such
	that
	\[
		|S|
		\leq
		\exp\pbra{
			O\pbra{
				\frac{p}{\alpha^2}
			}
		}
		\qquad\text{and}\qquad 
		\left\|
			\sup_{t\in T}\bg\cdot t-\max_{s\in S}\{\bg\cdot s+c_s\}
		\right\|_{L^p(\gamma_n)}
		\leq
		\alpha\cdot w(T)\,.
	\]
	Additionally, there exists an integer $m\geq n$ and a set $U \sse \R^m$ such that  
	\[
		|U|
		\leq
		\exp\pbra{
			O\pbra{
				\frac{p}{\alpha^2}
			}
		}
		\qquad\text{and}\qquad 
		\left\|
			\sup_{t\in T}\bg\cdot(t,0_{m-n})-\sup_{u\in U}\bg\cdot u 
		\right\|_{L^p(\gamma_m)}
		\leq
		\alpha\cdot w(T)\,.
	\]
\end{corollary}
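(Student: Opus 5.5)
We will rerun the proof of \Cref{thm:GP-sparsification}, replacing the $L^2$ bound on $\frac{d}{d\theta}H_\theta$ by an $L^p$ bound. The only change is in the net scale: we take $S\sse T$ to be a maximal $r$-separated set with $r:=c\,\alpha\, w(T)/\sqrt p$ for a small universal $c>0$. By \Cref{lem:sudakov}, as in \Cref{eq:net-size}, this gives $|S|\leq \exp(O(w(T)^2/r^2))=\exp(O(p/\alpha^2))$.

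The main step is an $L^p$ version of \Cref{lem:soft-active-clustering}. We use the same ODE \Cref{eq:intercept-ode}, whose global solvability (Step 2) does not depend on $p$. We then need to show $\|\frac{d}{d\theta}H_\theta\|_{L^p}\leq C\sqrt p\, r$ for a.e.\ $\theta$.
\begin{itemize}
\item Starting from \Cref{eq:H-derivative-centered}, apply Jensen's inequality to the convex function $x\mapsto |x|^p$ with the probability weights $\omega_t(g)$. This gives $|\frac{d}{d\theta}H_\theta(g)|^p\leq\sum_t\omega_t(g)|g\cdot u_t-m_t|^p$.
\item Integrating against $\gamma_n$ turns the right-hand side into $\sum_t\rho_t\,\Ex_{\mu_t}|\bg\cdot u_t-m_t|^p$.
\item Step 4 already shows that each $\mu_t$ has a $C^2$ potential with $\nabla^2V_t\succeq I$, and that $m_t=\Ex_{\mu_t}[\bg\cdot u_t]$. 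So \Cref{lemma:LSI-strongly-log-concave} applies and bounds each term by $(C\sqrt p\,\|u_t\|_2)^p\leq (C\sqrt p\, r)^p$.
\item Since $\sum_t\rho_t=1$, this gives the claimed bound.
\end{itemize}
The fundamental-theorem-of-calculus step \Cref{eq:main-proof-FTC} now has to be carried out in $L^p$ rather than $L^2$. We will use the Bochner-integral or Minkowski-integral form, $\|H_1-H_0\|_{L^p}\leq\int_0^1\|\partial_\theta H_\theta\|_{L^p}\,d\theta$. This can be justified pointwise, since for a.e.\ $g$ the map $\theta\mapsto H_\theta(g)$ is absolutely continuous with derivative \Cref{eq:H-derivative-raw}; we then apply Minkowski's integral inequality. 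The integrand is dominated by $C(1+\|g\|_2)$ because the $a_t'$ are bounded, which gives the needed measurability and integrability. This regularity is the main technical point to check; it should follow from \Cref{lem:l2-chain}, or from the same argument carried out with $p$ in place of $2$. The result is $\|H_1-H_0\|_{L^p}\leq C\sqrt p\,r$.

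The remaining steps copy the proof of \Cref{thm:GP-sparsification}.
\begin{itemize}
\item \emph{Removing the softmax.} Choose $\beta$ with $2\log|T|/\beta\leq r$. The softmax-to-max error is then pointwise at most $\log|T|/\beta$, so it costs at most $r$ in $L^p$.
\item \emph{Error bound.} The total error is at most $(C\sqrt p+1)r\leq\alpha w(T)$ once $c$ is small enough.
\item \emph{Shifts.} Define $c_s=\max_{\Pi(t)=s}a_t(1)$. These are non-negative because $a_s\equiv0$.
\item \emph{Infinite $T$.} Pass from finite to bounded $T$ by the same dominated-convergence limit, which works in $L^p$ because the difference is dominated by $O(\|g\|_2)\in L^p$.
\end{itemize}

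For the centered statement, follow \Cref{cor:centered-sparsification}. First apply the shifted $L^p$ result with parameter $c\alpha$. Markov's inequality together with $\|M-F\|_{L^p}\geq\|M-F\|_{L^2}$ (as $p\geq2$) gives the same bound $c_s\leq3w(T)$. Simulating each shift with $A$ auxiliary Gaussians then costs $3w(T)\,\|1-Z_A/\mu_A\|_{L^p}$. Since $Z_A$ is $1$-Lipschitz in $z$, \Cref{prop:lipschitz-concentration} gives $\|Z_A-\mu_A\|_{L^p}\lesssim\sqrt p$, and $\mu_A\asymp\sqrt{\log A}$. Taking $\log A\asymp p/\alpha^2$ makes this error at most $\alpha w(T)/2$. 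Then $|U|\leq 2A|S|=\exp(O(p/\alpha^2))$, as required.
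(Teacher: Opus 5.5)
Your proposal is correct and follows the paper's proof essentially step for step. It uses the net at radius $\Theta(\alpha\, w(T)/\sqrt{p})$, Jensen's inequality for $|x|^p$ over the softmax weights, the strongly log-concave moment bound for each $\mu_t$, and the same auxiliary-coordinate simulation of the shifts with $\|Z_A-\mu_A\|_{L^p}\lesssim\sqrt{p}$ and $\mu_A\gtrsim\sqrt{\log A}$. You are slightly more explicit than the paper on two points it leaves implicit: carrying out the fundamental-theorem-of-calculus step in $L^p$ via Minkowski's integral inequality, and deducing $c_s\leq 3w(T)$ from $\|M-F\|_{L^2}\leq\|M-F\|_{L^p}$.
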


\begin{proof} 
	The shifted statement follows by the same argument establishing
	\Cref{thm:GP-sparsification}, but with the net radius reduced to account for
	$L^p$ error; we explain the only change below. 
	In the notation of the proof of \Cref{lem:soft-active-clustering}, Jensen's inequality gives the $L^p$ analogue of \Cref{eq:jensen-soft-active}:
	\[
		\left\|
			\frac{d}{d\theta}H_\theta
		\right\|_{L^p(\gamma_n)}^p
		\leq
		\sum_{t\in T}
		\rho_t
		\left\|
			\bg\cdot u_t-\Ex_{\mu_t}\sbra{\bg\cdot u_t}
		\right\|_{L^p(\mu_t)}^p\,.
	\]
	As before, each $\mu_t$ has density proportional to $e^{-V_t}$ with
	$\nabla^2V_t\succeq I$.  Therefore
	\Cref{lemma:LSI-strongly-log-concave} gives
	\[
		\left\|
			\bg\cdot u_t-\Ex_{\mu_t}\sbra{\bg\cdot u_t}
		\right\|_{L^p(\mu_t)}
		\leq
		O\pbra{\sqrt{p}\cdot\|u_t\|_2}\,.
	\]
	Taking a net at radius
	\[
		r=\Theta\pbra{\frac{\alpha\cdot w(T)}{\sqrt p}}
	\]
	and applying Sudakov's minoration (\Cref{lem:sudakov}) gives 
	\[
		|S|
		\leq
		\exp\pbra{
			O\pbra{\frac{w(T)^2}{r^2}}
		}
		=
		\exp\pbra{
			O\pbra{\frac{p}{\alpha^2}}
		}\,.
	\]
	The softmax approximation error is handled as in the proof of
	\Cref{thm:GP-sparsification}, by taking $\beta$ sufficiently large.  
	The non-negativity of the shifts also follows from the same argument as before. 

	We now convert this shifted approximator into a centered one, following the argument in \Cref{cor:centered-sparsification}. 
	By decreasing the shifted error parameter by a sufficiently small constant factor, assume that
	\[
		\|M-F\|_{L^p(\gamma_n)}
		\leq
		\eta w(T)\,,
		\qquad
		M(g):=\sup_{t\in T}g\cdot t\,,
		\qquad
		F(g):=\max_{s\in S}\cbra{g\cdot s+c_s}\,,
	\]
	where $\eta>0$ is a sufficiently small constant multiple of $\alpha$.
	As in the proof of \Cref{cor:centered-sparsification}, each shift is bounded by $0\leq c_s\leq 3w(T)$. 
	Let $A\geq2$, let $\bz\sim N(0,I_A)$, and set $Z_A(\bz)$, $\mu_A$, and $\sigma_A$ as before. 
	Define 
	\[
		U
		:=
		\bigcup_{s\in S}
		\cbra{
			\pbra{s,\frac{c_s}{\mu_A}e_j},
			\pbra{s,-\frac{c_s}{\mu_A}e_j}
			:
			j\in[A]
		}
		\sse\R^{n+A}\,.
	\]
	Then $|U|\leq2A|S|$ and
	\[
		\widetilde F(g,z):=\max_{u\in U}(g,z)\cdot u
		=
		\max_{s\in S}\cbra{
			g\cdot s+\frac{c_s}{\mu_A}Z_A(z)
		}\,. 
	\]
	Therefore
	\[
		\abs{F(g)-\widetilde F(g,z)}
		\leq
		3w(T)\abs{1-\frac{Z_A(z)}{\mu_A}}\,.
	\]
	We next estimate the last factor.
	The map $z\mapsto Z_A(z)=\max_{1\leq j\leq A}|z_j|$ is $1$-Lipschitz, since
	\[
		\abs{Z_A(z)-Z_A(z')}
		\leq
		\max_{1\leq j\leq A}\abs{z_j-z'_j}
		\leq
		\|z-z'\|_2\,.
	\]
	Therefore \Cref{prop:lipschitz-concentration} gives, for every $\theta \geq0$,
	\[
		\Prx_{\bz\sim N(0,I_A)}
		\sbra{
			\abs{Z_A(\bz)-\mu_A}\geq \theta 
		}
		\leq
		2\exp\pbra{-\frac{\theta^2}{2}}\,.
	\]
	Integrating this tail bound yields the usual subgaussian moment bound: 
	\begin{align*}
		\Ex\sbra{\abs{Z_A-\mu_A}^p}
		&=
		p\int_0^\infty t^{p-1}
		\Prx\sbra{\abs{Z_A-\mu_A}\geq t}\,dt \\
		&\leq
		2p\int_0^\infty t^{p-1}e^{-t^2/2}\,dt
		\leq
		(C\sqrt p)^p\,.
	\end{align*}
	Hence $\|Z_A-\mu_A\|_{L^p(\gamma_A)} \leq C\sqrt p$. 
	Finally, the standard lower bound on the expected maximum of $A$ independent standard Gaussian variables gives $\mu_A\geq c\sqrt{\log A}$ for $A\geq2$.
	Thus
	\[
		\left\|1-\frac{Z_A}{\mu_A}\right\|_{L^p(\gamma_A)}
		=
		\frac{\|Z_A-\mu_A\|_{L^p(\gamma_A)}}{\mu_A}
		\leq
		C\sqrt{\frac{p}{\log A}}\,.
	\]
	Choosing $A = \exp(C'p/\alpha^2)$ with $C'$ sufficiently large, we get
	\[
		\|F-\widetilde F\|_{L^p(\gamma_{n+A})}
		\leq
		\frac{\alpha}{2}w(T)\,.
	\]
	The triangle inequality then gives the centered $L^p$ approximation, after decreasing the constant in the shifted error parameter.
	The size bound remains
	\[
		|U|
		\leq
		2A|S|
		\leq
		\exp\pbra{
			O\pbra{\frac{p}{\alpha^2}}
		}\,,
	\]
	completing the proof. 
\end{proof}

%% file: sections/applications.tex
\section{Applications of \Cref{thm:GP-sparsification}}
\label{sec:apps}

We now record two applications of \Cref{thm:GP-sparsification}, both following the arguments of \cite{de2026sparsifying} but with our improved sparsifier size yielding exponential improvements in the quantitative bounds. 

\subsection{A Junta Theorem for Norms}
\label{subsec:junta}

We now prove \Cref{thm:norm-junta}, which exponentially improves on \cite[Theorem~2]{de2026sparsifying}. 

\normjuntathm*

The proof follows the same general strategy as in \cite{de2026sparsifying}: first obtain an additive approximation to the norm by a low-dimensional norm, and then use anti-concentration of Gaussian suprema to convert this into a multiplicative approximation.  
A direct implementation of the argument from \cite{de2026sparsifying}, however, leads to a dependence of $\eps^{-5}$ in the exponent.  
The improvement here comes from using the $L^p$-error sparsifier of \Cref{cor:lp} for an appropriate choice of $p$. 

\begin{lemma}[Lemma~26 of~\cite{de2026sparsifying}] 
\label{lem:DNOS-anti}
	Suppose $T \sse \R^n$ is symmetric (that is, $t \in T$ if and only if $-t \in T$) and suppose $w(T) > 0$.  
	Then, for every $\eta>0$,
	\[
		\Prx_{\bg\sim N(0,I_n)}
		\sbra{
			\sup_{t\in T}\bg\cdot t \leq \eta\cdot  w(T)
		}
		\leq
		10\eta \,.
	\]
\end{lemma}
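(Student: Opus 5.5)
The plan is to use the Latała--Oleszkiewicz \emph{S-inequality}, applied twice. Write $\psi(g):=\sup_{t\in T} g\cdot t$. Since $T$ is symmetric, $\psi$ is a seminorm: it is convex, even, non-negative and positively homogeneous. So every sublevel set $K_s:=\{g:\psi(g)\leq s\}$ is a closed, symmetric, convex set, and $\{\psi\leq \lambda s\}=\lambda K_s$ for every $\lambda>0$. We may assume $w(T)<\infty$, since otherwise there is nothing to prove. The S-inequality states the following. Let $K$ be symmetric convex and let $P=\{|x_1|\leq a\}$ be a symmetric strip with $\gamma_n(K)=\gamma_n(P)$. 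Then $\gamma_n(\lambda K)\geq \gamma_n(\lambda P)$ for every $\lambda\geq 1$, and equivalently $\gamma_n(\lambda K)\leq\gamma_n(\lambda P)$ for every $0\leq\lambda\leq 1$.

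\textbf{Step 1: choosing the level.} Let $s$ be a median of $\psi(\bg)$, chosen so that $\gamma_n(K_s)=1/2$. The law of $\psi(\bg)$ may have atoms, for example at $0$ when $\psi$ has a nontrivial kernel. In that case I would instead take $s$ with $\gamma_n(K_s)\geq 1/2$, and compare with the strip of the same measure, whose half-width $a$ then only increases. Below I write the argument as if $\gamma_n(K_s)=1/2$ exactly; the general case only changes constants, and these are far from tight. The matching strip then has half-width $a=\Phi^{-1}(3/4)\approx 0.674$. Note that $s>0$: if $s=0$, then $\gamma_n(\ker\psi)\geq1/2$, and since $\ker\psi$ is a subspace this forces $\psi\equiv0$ a.e., contradicting $w(T)>0$.

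\textbf{Step 2: the median is comparable to $w(T)$.} Applying the S-inequality with $\lambda\geq1$ gives the tail bound
\[
\Pr[\psi(\bg)>\lambda s]\leq \Pr[|\bg_1|>\lambda a]\qquad(\lambda\geq1).
\]
For $\lambda<1$ we use the trivial bound $\Pr[\psi(\bg)>\lambda s]\leq 1$, together with $\Pr[|\bg_1|>\lambda a]\geq 1/2$ for $\lambda\leq 1$. Integrating the tail then gives
\[
w(T)=s\int_0^\infty\Pr[\psi>\lambda s]\,d\lambda\leq s\Bigl(1+\tfrac{1}{a}\,\E|\bg_1|\Bigr)\leq 3s.
\]
Hence $s\geq w(T)/3$.

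\textbf{Step 3: the small-ball bound.} If $\eta w(T)\geq s$, then $\eta\geq 1/3$ and the claim is trivial because $10\eta>1$. Otherwise set $\lambda=\eta w(T)/s\leq 3\eta<1$ and apply the reverse direction of the S-inequality:
\[
\Pr[\psi(\bg)\leq\eta w(T)]=\gamma_n(\lambda K_s)\leq \Pr[|\bg_1|\leq \lambda a]\leq \frac{2\lambda a}{\sqrt{2\pi}}\leq \frac{6a}{\sqrt{2\pi}}\,\eta\leq 10\eta.
\]

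The main obstacle is Step 2, namely lower-bounding the median by a constant multiple of the mean. Naive Gaussian concentration around the median loses a constant that is too large to close the argument. The S-inequality gives strip-type tails, which suffice. A secondary technical point is handling atoms of the law of $\psi$ and the case of an infinite $T$. For the latter I would either argue directly with the closed convex sublevel sets, or approximate $T$ by finite symmetric subsets as in the proof of \Cref{thm:GP-sparsification}.
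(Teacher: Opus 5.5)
The paper gives no proof of this lemma. It is quoted verbatim as Lemma~26 of \cite{de2026sparsifying} and used as a black box in the proof of \Cref{thm:norm-junta}, so there is no in-paper argument to compare with.

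Your argument is a correct, self-contained derivation from the Lata{\l}a--Oleszkiewicz S-inequality. At the median level $s$ with $\gamma_n(K_s)=1/2$, the upward direction of the S-inequality gives strip-type upper tails, $\Pr[\psi(\bg)>\lambda s]\le \Pr[|\bg_1|>\lambda a]$ for $\lambda\ge 1$. Hence $w(T)\le (1+\sqrt{2/\pi}/a)\,s\approx 2.18\,s$. The downward direction gives $\gamma_n(\lambda K_s)\le \Pr[|\bg_1|\le \lambda a]\le 2\lambda a/\sqrt{2\pi}$. Together these yield $\Pr[\psi(\bg)\le \eta\, w(T)]\le C\eta$ with $C\approx 1.2$ (about $1.6$ with your rounding $w(T)\le 3s$), well below the stated $10$. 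The price is relying on a deep, sharp comparison theorem. It is, however, a standard citable one, and only the linear dependence on $\eta$ matters for the application in \Cref{thm:norm-junta}.

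A few side remarks should be corrected, though none affects validity.

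First, your worry about atoms is unfounded. For bounded $T$ with $w(T)>0$, $\ker\psi$ is a proper subspace and hence $\gamma_n$-null. For $s>0$, the level set $\{\psi=s\}$ is $\ker\psi$ plus the relative boundary of a convex body in $(\ker\psi)^\perp$, which is also null. So $s\mapsto\gamma_n(K_s)$ is continuous, and an exact median with $\gamma_n(K_s)=1/2$ exists. This matters, because your fallback of taking $\gamma_n(K_s)\ge 1/2$ would not just harmlessly change constants. Your final constant is $2(a+\sqrt{2/\pi})/\sqrt{2\pi}$, which is unbounded as $\gamma_n(K_s)\to 1$.

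Second, the case $w(T)=\infty$ is not vacuous. For unbounded symmetric $T$ one has $\psi(\bg)=\infty$ almost surely, so the probability equals $1$ and the inequality fails for $\eta<1/10$. The lemma must be read with $T$ bounded, as in its use with $T=B^\circ$.

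Finally, you need no finite approximation of $T$. The S-inequality applies directly to the closed symmetric convex sets $K_s$, which may be unbounded, and $\psi$ is finite and Lipschitz when $T$ is bounded.
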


We can now prove \Cref{thm:norm-junta}.

\begin{proof}[Proof of \Cref{thm:norm-junta}]
	Let $B^\circ$ be the dual unit ball of $\psi$, that is 
	\[
		B^\circ
		:=
		\cbra{
			t\in\R^n:
			x\cdot t\leq \psi(x)\text{ for every }x\in\R^n
		}\,.
	\]
	Set $T:=B^\circ$, and so $\psi(x)=\sup_{t\in T} x\cdot t$. 
	Note that $T$ is bounded and symmetric, and that $w(T) = \E_{\bg\sim N(0,I_n)}[\psi(\bg)]$. 
	If $w(T)=0$, then the result is trivial.  
	Otherwise, since the conclusion is invariant under rescaling $\psi$ and $\phi$ by the same positive constant, we may rescale and assume throughout the proof that $w(T)=1$. 

	Set
	\[
		p:=A\log\pbra{\frac{1}{\eps}}
		\qquad\text{and}\qquad
		\alpha:=a\eps^2\,,
	\]
	where $A>0$ is a sufficiently large constant and $a>0$ is a
	sufficiently small constant.  
	Let
	\[
		K_\eps
		:=
		\exp\pbra{
			C_0\frac{\log(1/\eps)}{\eps^4}
		}
	\]
	for a sufficiently large universal constant $C_0$.
	If $n\leq K_\eps$, then the theorem is trivial by taking $\phi=\psi$.
	We therefore assume $n>K_\eps$.
	
	Apply the centered part of \Cref{cor:lp} with error parameter $\alpha$.
	Since $T$ is symmetric, the resulting centered index set may be taken symmetric: in the shifted step, one first replaces the shifted sparsifier by its symmetrization, using the same shift for $s$ and $-s$, and then the auxiliary-coordinate construction preserves symmetry.
	Moreover, the auxiliary directions used in that construction span only
	$\exp(O(p/\alpha^2))$ dimensions, so after increasing $C_0$ if necessary the assumption $n>K_\eps$ lets us realize those directions inside the orthogonal complement of the shifted sparsifier in the original ambient space $\R^n$.
	Thus there is a symmetric set $S\sse \R^n$ such that
	\begin{equation} \label{eq:nuonuo}
		\left\|
			\sup_{t\in T} \bg\cdot t
			-
			\sup_{s\in S} \bg\cdot s
		\right\|_{L^p(\gamma_n)}
		\leq
		a\eps^2
	\end{equation}
	Moreover,
	\[
		|S|
		\leq
		\exp\pbra{
			O\pbra{
				\frac{\log(1/\eps)}{\eps^4}
			}
		}\,.
	\]
	Let $E':=\operatorname{span}(S)$.
	Then $\dim(E')\leq |S|\leq K_\eps$, after increasing $C_0$ if necessary.
	Define $\phi(x):=\sup_{s\in S}x\cdot s$.
	Since $S$ is symmetric, $\phi$ is a $\dim(E')$-junta norm.

	We now convert the additive $L^p$-guarantee from \Cref{eq:nuonuo} into a multiplicative one. 
	Let $b>0$ and $C>0$ be constants to be fixed later.
	Note that 
	\begin{align*}
		\Prx_{\bg\sim N(0,I_n)}
		\sbra{
			\abs{
				\sup_{t\in T}\bg\cdot t
				-
				\sup_{s\in S}\bg\cdot s
			}
			>
			b\eps^2
		} 
		&=
		\Prx_{\bg\sim N(0,I_n)}
		\sbra{
			\abs{
				\sup_{t\in T}\bg\cdot t
				-
				\sup_{s\in S}\bg\cdot s
			}^p
			>
			(b\eps^2)^p
		} \\
		&\leq \pbra{\frac{a}{b}}^p \tag{Markov's inequality and \Cref{eq:nuonuo}}  \\
		&\leq \frac{\eps}{2} 
	\end{align*}
	by first fixing $C\leq 1/20$ and $b\leq C/2$, and then taking $a>0$ sufficiently small and $A$ sufficiently large.
	On the other hand, by \Cref{lem:DNOS-anti}, 
	\[
		\Prx_{\bg\sim N(0,I_n)}
		\sbra{
			\sup_{t\in T}\bg\cdot t
			\leq
			C\eps
		}
		\leq
		10C\eps\,.
	\]
	With the above choice $C\leq 1/20$, this probability is at most $\eps/2$.  
	Thus the two exceptional events have total probability at most $\eps$.  
	Outside them, we have
	\[
		\sup_{t\in T}g\cdot t
		\geq
		C\eps 
		\qquad\text{and}\qquad 
		\abs{
			\sup_{t\in T}g\cdot t
			-
			\sup_{s\in S}g\cdot s
		}
		\leq
		b\eps^2\,.
	\]
	It follows that 
	\[
		\abs{
			\sup_{t\in T}g\cdot t
			-
			\sup_{s\in S}g\cdot s
		}
		\leq
		\frac{\eps}{2}\cdot 
		\sup_{t\in T}g\cdot t\,.
	\]
	Since $\sup_{t\in T}g\cdot t=\psi(g)$ and $\sup_{s\in S}g\cdot s=\phi(g)$, we obtain
	\[
		\pbra{1-\frac{\eps}{2}}\,\psi(g)
		\leq
		\phi(g)
		\leq
		\pbra{1+\frac{\eps}{2}}\,\psi(g)\,.
	\]	
	Since $0<\eps<1/2$ and both quantities are positive outside the exceptional events, this implies 
	\[
		\pbra{1 + \frac{\eps}{2}}^{-1}
		\leq
		\frac{\psi(g)}{\phi(g)}
		\leq
		\pbra{1 - \frac{\eps}{2}}^{-1}\,.
	\]
	The stated bound now follows from the easy estimates $(1+\eps/2)^{-1}\geq1-\eps$ and $(1-\eps/2)^{-1}\leq1+\eps$. 
\end{proof} 

\subsection{Learning, Testing, and Approximating Convex Sets} 
\label{subsec:apps-polytope}

We next use \Cref{thm:GP-sparsification} to sparsify intersections of narrow halfspaces.  
As in \cite{de2026sparsifying}, the resulting geometric approximation theorem yields learning and testing guarantees when combined with known algorithms for intersections of halfspaces. 

Recall that for measurable sets $K,L\sse\R^n$, we write
\[
	\dG(K,L)
	:=
	\Prx_{\bg\sim N(0,I_n)}
	\sbra{\bg\in K\,\triangle\,L}
\]
for their Gaussian distance.
Additionally, recall that a convex set $K\sse\R^n$ has geometric width at most $r$ if it can be written as
\[
	K
	=
	\bigcap_{t\in T}
	\cbra{x\in\R^n:t\cdot x\leq r_t}\,,
	\qquad
	T\sse\S^{n-1}
	\qquad\text{and}\qquad
	\abs{r_t}\leq r
	\quad\text{for all }t\in T\,.
\]

\subsubsection{Approximating Intersections of Narrow Halfspaces}

We first prove the approximation theorem in the exact-width case, where every defining halfspace has the same distance from the origin; this is the setting of \cite[Lemma~28]{de2026sparsifying}. 
The proof follows the same threshold-conversion argument as in \cite{de2026sparsifying}, but uses the $L^p$ sparsifier from \Cref{cor:lp}. 
Using \Cref{thm:GP-sparsification} would give an extra factor of $\eps^{-1}$ in the exponent; taking $p\asymp\log(1/\eps)$ removes this loss. 

\begin{lemma}[Exact-width halfspace sparsification]
	\label{lem:exact-width-polytope}
	Let $r\geq1$ and $0<\eps<1/2$.
	Suppose
	\[
		T\sse \frac1r \S^{n-1}
		\qquad\text{and}\qquad
		K=
		\bigcap_{t\in T}
		\cbra{x\in\R^n:t\cdot x\leq 1}\,.
	\]
	Then there is a set $L\sse\R^n$, which is an intersection of at most
	\[
		\exp\pbra{
			O\pbra{
				\frac{\log(1/\eps)(r+\sqrt{\log(1/\eps)})^4}{\eps^2}
			}
		}
	\]
	halfspaces, such that $\dG(K,L)\leq\eps$.
\end{lemma}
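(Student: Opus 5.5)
The plan is to follow the threshold-conversion argument of \cite[Lemma~28]{de2026sparsifying}, feeding in the shifted $L^p$ sparsifier of \Cref{cor:lp} instead of the $L^1$ sparsifier. Write $M(g):=\sup_{t\in T} g\cdot t$, so that $K=\{x: M(x)\leq 1\}$. Every $t\in T$ has $\|t\|_2=1/r$, so $M$ is $(1/r)$-Lipschitz. By \Cref{prop:lipschitz-concentration}, $M$ then lies within $O(\sqrt{\log(1/\eps)}/r)$ of its mean $w(T)$, except on an event of probability $\eps$.

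\textbf{Step 1: reduce to $w(T)$ of order one.} If $w(T)\geq 1+C\sqrt{\log(1/\eps)}/r$, then $\gamma_n(K)\leq\eps$. In that case we take $L$ to be the empty set, which is the intersection of two opposite halfspaces, and we are done. Otherwise
\[
	w(T)\lesssim 1+\frac{\sqrt{\log(1/\eps)}}{r}\lesssim \frac{r+\sqrt{\log(1/\eps)}}{r},
\]
and we assume this from now on.

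\textbf{Step 2: sparsify and threshold.} Apply the shifted part of \Cref{cor:lp} with $p:=A\log(1/\eps)$ and an error parameter $\alpha$ to be chosen. This gives $S\sse T$ and shifts $c_s\geq0$ with $\|M-F\|_{L^p}\leq \alpha\, w(T)$, where $F(g):=\max_{s\in S}\{g\cdot s+c_s\}$. Set
\[
	L:=\{x: F(x)\leq1\}=\bigcap_{s\in S}\{x: s\cdot x\leq 1-c_s\},
\]
which is an intersection of $|S|$ halfspaces. For any $\delta>0$,
\[
	K\triangle L\sse \{|M-F|>\delta\}\cup\{|M-1|\leq\delta\}.
\]
Markov's inequality in $L^p$ bounds the first event by $(\alpha w(T)/\delta)^p$. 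If we take $\alpha w(T)\leq \delta/e$ and $A$ large, this is at most $\eps/2$.

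\textbf{Step 3: anticoncentration at level $1$.} This is the main obstacle. We need
\[
	\Pr\big[|M(\bg)-1|\leq\delta\big]\lesssim \delta\, r\,\big(r+\sqrt{\log(1/\eps)}\big).
\]
Rescaling $T$ to unit vectors, this is the statement that the supremum $rM$ of a unit-variance Gaussian process, whose mean is $r\,w(T)\lesssim r+\sqrt{\log(1/\eps)}$, has density $O(r+\sqrt{\log(1/\eps)})$ near the level $r$. Here the factor $r$ comes from the rescaling and the factor $r+\sqrt{\log(1/\eps)}$ from the size of that mean.

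I would try to obtain this first by differentiating $s\mapsto\gamma_n(sK)$. On $\partial(sK)$ the outer normal $\nu$ is $t/\|t\|_2$ for an active $t$, so $x\cdot\nu=sr$ there. This gives $\frac{d}{ds}\gamma_n(sK)=r\cdot\mathrm{GSA}(sK)$, where $\mathrm{GSA}$ denotes Gaussian surface area. The task then becomes a dimension-free surface-area bound for these narrow-width bodies. If this route fails, I would instead reuse the anticoncentration estimate for Gaussian suprema invoked in the proof of \cite[Lemma~28]{de2026sparsifying}: a density bound of order $\sigma^{-1}(1+\mathbb{E}\sup/\sigma)$ for suprema with pointwise standard deviation $\sigma=1/r$.

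\textbf{Step 4: choose parameters.} Take $\delta\asymp \eps/(r(r+\sqrt{\log(1/\eps)}))$, so the anticoncentration event has probability at most $\eps/2$. Then take
\[
	\alpha\asymp \frac{\delta}{w(T)}\asymp\frac{\eps}{(r+\sqrt{\log(1/\eps)})^2},
\]
using the bound on $w(T)$ from Step~1. \Cref{cor:lp} then gives
\[
	|S|\leq\exp\big(O(p/\alpha^2)\big)=\exp\big(O\big(\log(1/\eps)(r+\sqrt{\log(1/\eps)})^4/\eps^2\big)\big),
\]
which is the claimed bound.
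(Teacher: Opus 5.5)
Your proposal is correct and is essentially the paper's proof. The shared steps are the Lipschitz-concentration reduction to $w(T)\le 1+O(\sqrt{\log(1/\eps)}/r)$, and the shifted $L^p$ sparsifier of \Cref{cor:lp} with $p\asymp\log(1/\eps)$ plus Markov's inequality. For Step~3 the paper uses exactly your fallback, the Chernozhukov--Chetverikov--Kato bound $\Pr[|\sup_{u\in U}\bg\cdot u-\theta|\le\delta]\le 4\delta(1+w(U))$ for $U\sse\S^{n-1}$, applied to $U=rT$, and then the same parameter count; your surface-area route is only a reformulation of this density bound and is not needed.

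One detail to tidy: set $\alpha:=c\,\eps/(r+\sqrt{\log(1/\eps)})^2$ outright rather than $\alpha\asymp\delta/w(T)$. The latter can leave the range $0<\alpha<1$ required by \Cref{cor:lp} when $w(T)$ is small, which the paper handles with a case split, whereas the former still gives $\alpha\,w(T)\le\delta/e$ by Step~1.
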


\begin{proof}
	We may assume that $\Prx_{\bg\sim N(0,I_n)}[\bg\in K]\in(\eps,1-\eps)$.  
	Indeed, if this measure is at most $\eps$, the empty set is an $\eps$-approximator; if it is at least $1-\eps$, then all of $\R^n$ is an $\eps$-approximator. 
	Since
	$K=
		\cbra{
			x\in\R^n:
			\sup_{t\in T}x\cdot t\leq 1
		},
	$ 
	it follows that 
	\[
		\Prx_{\bg\sim N(0,I_n)}
		\sbra{
			\sup_{t\in T}\bg\cdot t\leq 1
		}
		\geq
		\eps\,.
	\]
	The function $x\mapsto \sup_{t\in T}x\cdot t$ is $(1/r)$-Lipschitz. 
	Thus, if $w(T)\geq1$, concentration of Lipschitz functions of Gaussians (\Cref{prop:lipschitz-concentration}) gives 
	\[
		\eps
		\leq
		\Prx_{\bg\sim N(0,I_n)}
		\sbra{
			\sup_{t\in T}\bg\cdot t\leq 1
		}
		\leq
		2\exp\pbra{
			\frac{-r^2(w(T)-1)^2}{2}
		}\,.
	\]
	Consequently, we have the following bound for all $T$ as in the statement of \Cref{lem:exact-width-polytope}: 
	\begin{equation}
		\label{eq:polytope-width-bound}
		w(T)
		\leq
		1+\frac{1}{r}\sqrt{2\log\pbra{\frac{2}{\eps}}}\,.
	\end{equation}

	We next use an anti-concentration inequality due to Chernozhukov, Chetverikov, and Kato~\cite{CCK-2} which says that if $U\sse\S^{n-1}$, then for all $\theta\in\R$ and $\delta>0$ we have 
	\[
		\Prx_{\bg\sim N(0,I_n)}
		\sbra{
			\abs{
				\sup_{u\in U}\bg\cdot u-\theta
			}
			\leq
			\delta
		}
		\leq
		4\delta\pbra{1+w(U)}\,.
	\]
	Applying this to $rT\sse\S^{n-1}$ gives, for every $\tau>0$,
	\begin{align}
		\Prx_{\bg\sim N(0,I_n)}
		\sbra{
			\abs{
				\sup_{t\in T}\bg\cdot t-1
			}
			\leq
			\tau
		}
		&=
		\Prx_{\bg\sim N(0,I_n)}
		\sbra{
			\abs{
				\sup_{u\in rT}\bg\cdot u-r
			}
			\leq
			r\tau
		} \nonumber \\
		&\leq
		4r\tau\pbra{1+w(rT)} \nonumber \\
		&\leq
		4r\tau\pbra{2r+\sqrt{2\log\pbra{\frac{2}{\eps}}}}\,,
		\label{eq:polytope-anti-conc}
	\end{align}
	where the last step uses \Cref{eq:polytope-width-bound} and $r\geq1$.

	Put
	\[
		A:=4r\pbra{2r+\sqrt{2\log\pbra{\frac{2}{\eps}}}}
		\qquad\text{and}\qquad
		\tau:=\frac{\eps}{2A}\,.
	\]
	Let $p := C\log(1/\eps)$ for a sufficiently large universal constant $C > 0$, and let $\eta := c\tau$ for a sufficiently small universal constant $c > 0$. 
	We now apply \Cref{cor:lp} with absolute error $\eta$. 
	More precisely, if $\eta < w(T)$, we apply it with accuracy parameter $\eta/w(T)$; otherwise we apply it with any fixed constant accuracy parameter. 
	In either case, we obtain $S\sse T$ and non-negative shifts $\{c_s\}_{s\in S}$ such that
	\[
		|S|
		\leq
		\exp\pbra{
			O\pbra{
				p+\frac{p\cdot w(T)^2}{\eta^2}
			}
		}
	\]
	and
	\[
		\left\|
			\sup_{t\in T}\bg\cdot t
			-
			\sup_{s\in S}\{\bg\cdot s+c_s\}
		\right\|_{L^p(\gamma_n)}
		\leq
		\eta\,.
	\]
	Define the polytope
	\[
		L
		:=
		\cbra{x\in\R^n: \sup_{s\in S} \cbra{x\cdot s + c_s} \leq 1}
		=
		\bigcap_{s\in S}
		\cbra{x\in\R^n:s\cdot x\leq1-c_s}\,.
	\]
	By Markov's inequality,
	\[
		\Prx_{\bg\sim N(0,I_n)}
		\sbra{
			\abs{
				\sup_{t\in T}\bg\cdot t
				-
				\sup_{s\in S} \cbra{\bg\cdot s + c_s}
			}
			>
			\tau
		}
		\leq
		\pbra{\frac{\eta}{\tau}}^p
		\leq
		\frac{\eps}{2}\,,
	\]
	thanks to appropriate choices of $c$ and $C$. 
	On the other hand, \Cref{eq:polytope-anti-conc} and the choice of $\tau$ give
	\[
		\Prx_{\bg\sim N(0,I_n)}
		\sbra{
			\abs{
				\sup_{t\in T}\bg\cdot t-1
			}
			\leq
			\tau
		}
		\leq
		\frac{\eps}{2}\,.
	\]
	Outside these two exceptional events, which happen with probability $\eps$, the two thresholded decisions agree:
	\[
		\sup_{t\in T}g\cdot t\leq 1
		\qquad\text{if and only if}\qquad
		\sup_{s\in S} \cbra{g\cdot s + c_s} \leq 1\,.
	\]
	It follows that $\dG(K,L)\leq\eps$.

	It remains to bound the number of halfspaces. 
	For notational convenience we set $a := \sqrt{2\log(2/\eps)}$. 
	By \Cref{eq:polytope-width-bound}, we have 
	\[
		w(T)
		\leq
		1+\frac{a}{r}
		=
		\frac{r+a}{r}\,.
	\]
	Also,
	\[
		\frac1{\tau^2}
		=
		O\pbra{
			\frac{r^2(2r+a)^2}{\eps^2}
		}\,.
	\]
	Since $\eta=c\tau$, we get
	\[
		\frac{p\cdot w(T)^2}{\eta^2}
		=
		O\pbra{
			\frac{\log(1/\eps)(r+a)^2(2r+a)^2}{\eps^2}
		}
		\leq
		O\pbra{
			\frac{\log(1/\eps)(r+a)^4}{\eps^2}
		}\,.
	\]
	The additional $O(p)$ term is absorbed into this bound for $r\geq1$ and $0<\eps<1/2$.
	Recalling that $a=\sqrt{2\log(2/\eps)}$ gives
	\[
		|S|
		\leq
		\exp\pbra{
			O\pbra{
				\frac{\log(1/\eps)(r+\sqrt{\log(1/\eps)})^4}{\eps^2}
			}
		}
	\]
	as desired. 
\end{proof}

The following is implicit in the proof of Theorem~3 of \cite{de2026sparsifying}: If every exact-width body of width $R$ admits $\delta$-approximation by $M(R,\delta)$ halfspaces, then every convex set of geometric width at most $r$ admits $O(\delta)$-approximation by $M(\Theta(r),\Theta(\delta))$ halfspaces. 
Applying this reduction to \Cref{lem:exact-width-polytope} yields \Cref{thm:polytope-approximation}. 

\polytopeapproxthm*


\subsubsection{Algorithmic Applications}

We now record algorithmic consequences of \Cref{thm:polytope-approximation}, following
Section~5 of~\cite{de2026sparsifying} with the improved value of $\newalpha$. 
For $r\geq 1$, let $\Conv(r)$ denote the class of indicators of convex sets in $\R^n$ of geometric width at most $r$.
For Boolean functions $f,h:\R^n\to\zo$, write
\[
	\dG(f,h)
	:=
	\Prx_{\bg\sim N(0,I_n)}
	\sbra{
		f(\bg)\neq h(\bg)
	}
	\qquad\text{and}\qquad
	\dG(f,\calC)
	:=
	\inf_{h\in\calC}\dG(f,h)\,.
\]
Let $\calH_k$ denote the class of intersections of at most $k$ halfspaces in $\R^n$. 
Thus, by \Cref{thm:polytope-approximation}, every function in $\Conv(r)$ is $\eps$-close in Gaussian distance to a function in $\calH_{\newalpha}$.
Equivalently, writing 
\[
	\OPT_r(f)
	:=
	\dG(f,\Conv(r))
	\qquad\text{and}\qquad 
	\OPT'_k(f) 
	:= 
	\dG(f, \calH_k)\,,
\]
we have, for every Boolean function $f:\R^n\to\zo$, 
\begin{equation} \label{eq:stitch}
	\OPT'_{\newalpha}(f)
	\leq
	\OPT_r(f)+\eps
\end{equation}
thanks to \Cref{thm:polytope-approximation}. 

\paragraph{Agnostic Learning.}

Klivans, O'Donnell, and Servedio~\cite{KOS:08} gave an algorithm running in time $n^{O(\log(k)/\eps^4)}$ that agnostically learns intersections of $k$ halfspaces over Gaussian space from independent labeled examples drawn from $N(0,I_n)$.  
More precisely, given examples $\bx\sim N(0,I_n)$ labeled by an arbitrary target function $f:\R^n\to\zo$, their algorithm outputs, with high probability, a hypothesis $h:\R^n\to\zo$ satisfying
\[
    \dG(f,h)
    \leq
    \OPT'_k(f)+\eps\,.
\] 
Combining this with \Cref{eq:stitch} immediately gives the following. 

\begin{corollary}[Agnostic learning from examples]
	\label{cor:polytope-learning-examples}
	Let $r\geq1$, let $0<\eps<1/2$, and let $f:\R^n\to\zo$ be an unknown Boolean function. 
	There is an algorithm which, given access to independent labeled examples
	$(\bx,f(\bx))$ with $\bx\sim N(0,I_n)$, runs in time
	\[
		n^{
			O\pbra{
				\log(\alpha(r,\eps/2))/\eps^4
			}
		}
		=
		n^{\wt{O}(r^4/\eps^6)}
	\]
	and, with probability $9/10$, outputs a hypothesis $h:\R^n\to\zo$ satisfying 
	\[
		\dG(f,h)
		\leq
		\OPT_r(f)+\eps\,.
	\]
\end{corollary}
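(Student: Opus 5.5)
The plan is to run the KOS agnostic learner for intersections of $k$ halfspaces with $k:=\alpha(r,\eps/2)$ and accuracy parameter $\eps/2$, and then transfer its guarantee to $\Conv(r)$ through \Cref{eq:stitch}. The KOS algorithm, given labeled examples $(\bx,f(\bx))$ with $\bx\sim N(0,I_n)$, runs in time $n^{O(\log(k)/(\eps/2)^4)}=n^{O(\log(\alpha(r,\eps/2))/\eps^4)}$. With probability at least $9/10$ (after the usual confidence amplification by repetition and hypothesis selection on a fresh sample, if the stated success probability differs), it outputs $h$ satisfying $\dG(f,h)\leq \OPT'_{k}(f)+\eps/2$.

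The transfer step applies \Cref{eq:stitch} with $\eps/2$ in place of $\eps$, which gives $\OPT'_{\alpha(r,\eps/2)}(f)\leq \OPT_r(f)+\eps/2$. This instantiation is legitimate because \Cref{thm:polytope-approximation} holds for every $0<\eps<1/2$. Combining the two bounds yields $\dG(f,h)\leq\OPT_r(f)+\eps$.

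It remains to simplify the running time. By \Cref{thm:polytope-approximation},
\[
\log\alpha(r,\eps/2)=O\pbra{\frac{\log(1/\eps)(r+\sqrt{\log(1/\eps)})^4}{\eps^2}}.
\]
Using $(r+\sqrt{\log(1/\eps)})^4\leq 8(r^4+\log^2(1/\eps))=r^4\cdot\mathrm{polylog}(1/\eps)$ for $r\geq1$, this is $\wt O(r^4/\eps^2)$. Dividing by $\eps^4$ then gives the exponent $\wt O(r^4/\eps^6)$.

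There is no real obstacle here. The only points needing care are bookkeeping: the halving of $\eps$, and confirming that the polylogarithmic factors in $1/\eps$ are legitimately absorbed into the $\wt O$ notation.
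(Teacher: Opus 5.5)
Your proposal is correct and matches the paper's argument: run the Klivans--O'Donnell--Servedio learner for intersections of $k=\alpha(r,\eps/2)$ halfspaces at accuracy $\eps/2$, then transfer its guarantee to $\Conv(r)$ via \Cref{eq:stitch} instantiated at $\eps/2$. Your simplification of the exponent to $\wt{O}(r^4/\eps^6)$, using $r\geq 1$ to absorb the $\log(1/\eps)$ factors, is also right.
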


Sometimes, one also has the ability to query the unknown target function $f$ on inputs of their choice, rather than merely receiving passively labeled examples. 
Diakonikolas, Kane, Kontonis, Tzamos, and Zarifis~\cite{DKKTZ23} showed that this additional power can be exploited to give a substantially more efficient agnostic learning algorithm for intersections of $k$ halfspaces: their algorithm runs in time
\[
	\poly(n)\cdot 2^{\poly(\log(k)/\eps)}\,,
\]
assuming black-box query access to the target function. Combining their result with \Cref{eq:stitch} gives:

\begin{corollary}[Agnostic learning with queries]
	\label{cor:polytope-learning-queries}
	Let $r\geq1$, let $0<\eps<1/2$, and let $f:\R^n\to\zo$ be an unknown
	Boolean function. 
	There is an algorithm which, given black-box query access to $f$, runs in
	time
	\[
		\poly(n)\cdot
		2^{
			\poly\pbra{
				\log(\alpha(r,\eps/2))/\eps
			}
		}
		= \poly(n)\cdot 2^{\poly(r, 1/\eps)}
	\]
	and, with probability $9/10$, outputs a hypothesis $h:\R^n\to\zo$ satisfying
	\[
		\dG(f,h)
		\leq
		\OPT_r(f)+\eps\,.
	\]
\end{corollary}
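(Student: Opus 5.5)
This follows the same recipe as \Cref{cor:polytope-learning-examples}: run the query-based algorithm of \cite{DKKTZ23} for the class $\calH_k$ with $k:=\alpha(r,\eps/2)$ and accuracy $\eps/2$, and bound its error using \Cref{eq:stitch}. Concretely, given query access to $f$, invoke \cite{DKKTZ23} on $\calH_k$ with error $\eps/2$ and success probability $9/10$. It returns $h$ with
\[
	\dG(f,h)\leq \OPT'_k(f)+\frac{\eps}{2}\,.
\]
Applying \Cref{eq:stitch} with $\eps/2$ in place of $\eps$ (that is, \Cref{thm:polytope-approximation} at accuracy $\eps/2$) gives $\OPT'_{\alpha(r,\eps/2)}(f)\leq \OPT_r(f)+\eps/2$. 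Combining the two displays yields $\dG(f,h)\leq \OPT_r(f)+\eps$.

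The running time is $\poly(n)\cdot 2^{\poly(\log(k)/(\eps/2))}$, which is the first claimed form after absorbing constants into $\poly$. For the simplified form, note that
\[
	\log \alpha(r,\eps/2)=O\pbra{\frac{\log(2/\eps)\,(r+\sqrt{\log(2/\eps)})^4}{\eps^2}}\,.
\]
Since $r\geq1$ and $\log(1/\eps)\leq 1/\eps$, this quantity is at most $\poly(r,1/\eps)$. Dividing by $\eps$ and applying $\poly$ keeps it polynomial in $r$ and $1/\eps$, so the running time is $\poly(n)\cdot 2^{\poly(r,1/\eps)}$.

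The argument is a direct combination of known results, so there is no substantive obstacle. The only point to verify is that the \cite{DKKTZ23} guarantee is agnostic relative to $\OPT'_k(f)$ for arbitrary Boolean $f$ under $N(0,I_n)$, using membership queries to $f$. That is exactly the model assumed here, so the chain of inequalities above goes through.
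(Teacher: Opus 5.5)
Your proposal is correct and matches the paper's argument, which simply combines the query-based agnostic learner of \cite{DKKTZ23} for $\calH_k$ (run with $k=\alpha(r,\eps/2)$ at accuracy $\eps/2$) with \Cref{eq:stitch} applied at accuracy $\eps/2$. Your runtime simplification, using $\log\alpha(r,\eps/2)=\poly(r,1/\eps)$, is also the intended one.
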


\paragraph{Property Testing.}

We also obtain a tolerant testing algorithm (equivalently, a distance-estimation algorithm) for $\Conv(r)$ with dimension-free query complexity.  
In particular, we will prove the following: 

\begin{corollary}[Tolerant testing]
	\label{cor:polytope-tolerant-testing}
	Let $r\geq1$ and let $0\leq\eps_1<\eps_2<1$.
	Put
	\[
		\Delta := \eps_2-\eps_1
		\qquad\text{and}\qquad
		k := \alpha(r,\Delta/4)\,.
	\]
	There is a black-box query algorithm which makes $k^{\poly(\log k,1/\Delta)} = 2^{\poly(r,1/\Delta)}$ queries to an unknown target function $f:\R^n\to\zo$, and with probability at least $9/10$ has the following guarantee:
	\begin{itemize}
		\item If $\OPT_r(f)\leq \eps_1$, then it outputs ``accept.''
		\item If $\OPT_r(f)\geq \eps_2$, then it outputs ``reject.''
	\end{itemize}
\end{corollary}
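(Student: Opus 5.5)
The plan is to reduce tolerant testing of $\Conv(r)$ to tolerant testing (distance estimation) for $\calH_k$, the intersections of at most $k$ halfspaces, via the sandwich in \Cref{eq:stitch}. Since $\calH_k$ is not contained in $\Conv(r)$, we cannot simply estimate $\OPT_r(f)$ directly. Instead we estimate $\OPT'_k(f)$ with $k=\alpha(r,\Delta/4)$ and additive accuracy $\Delta/4$.

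We then threshold this estimate. The two facts we need are the following.
\begin{itemize}
\item \Cref{eq:stitch} with error parameter $\Delta/4$ gives $\OPT'_k(f)\le \OPT_r(f)+\Delta/4$.
\item Conversely, we want $\OPT_r(f)\le \OPT'_k(f)+(\text{something small})$.
\end{itemize}
The second direction is the main obstacle, since an intersection of $k$ arbitrary halfspaces need not have geometric width $r$.

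I would handle this as follows, using $\OPT_r(f)=\inf_{K\in\Conv(r)}\dG(f,K)$. If $\OPT_r(f)\le\eps_1$, then $\OPT'_k(f)\le \eps_1+\Delta/4$. In that case the algorithm should accept whenever its estimate is at most $\eps_1+\Delta/2$.

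For the reject case we need the converse. Following Section~5 of \cite{de2026sparsifying}, I would instead estimate the distance to the subclass $\calH_k(r)$ of intersections of at most $k$ halfspaces each at distance at most $r$ from the origin. This class is contained in $\Conv(r)$, so $\OPT_r\le\dG(f,\calH_k(r))$. Moreover, the approximator $L$ produced in \Cref{lem:exact-width-polytope} has thresholds $1-c_s$ with $0\le c_s\le 3w(T)=O(1)$ after scaling, so it has geometric width $O(r)$. Hence \Cref{eq:stitch} continues to hold with $\calH_k(O(r))$ in place of $\calH_k$. Replacing $r$ by $\Theta(r)$ in the definition of $k$, which is absorbed into the $\alpha$ notation up to constants, gives
\[
\OPT_r(f)\le \dG(f,\calH_k(O(r)))\le \OPT_{O(r)}(f)+\Delta/4 .
\]
This mismatch between $r$ and $O(r)$ is the delicate point. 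If it cannot be closed, I would state the tester as distinguishing $\OPT_r\le\eps_1$ from $\OPT_{O(r)}\ge\eps_2$, as in \cite{de2026sparsifying}.

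The final step is the query complexity. Here I would invoke the known dimension-free distance estimator for intersections of $k$ halfspaces (the tolerant tester of \cite{de2026sparsifying}, built from the query learner of \cite{DKKTZ23} together with junta/dimension reduction). It uses $k^{\poly(\log k,1/\Delta)}$ queries and estimates the distance to $\calH_k$ within $\Delta/4$ with probability $9/10$. Substituting $\log k=\wt O(r^4/\Delta^2)$ from \Cref{thm:polytope-approximation} yields $2^{\poly(r,1/\Delta)}$.
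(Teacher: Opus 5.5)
There is a genuine gap, which you partly acknowledge. Your argument yields at best a tester that accepts when $\OPT_r(f)\le\eps_1$ and rejects when $\OPT_{Cr}(f)\ge\eps_2$ for some constant $C>1$. Since $\OPT_{Cr}\le\OPT_r$, this is strictly weaker than the corollary.

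Your displayed chain also runs the wrong way. Because $\calH_k(Cr)\sse\Conv(Cr)$ but not $\Conv(r)$, what you can prove is $\OPT_{Cr}(f)\le\dG(f,\calH_k(Cr))\le\OPT_r(f)+\Delta/4$. The inequality $\OPT_r(f)\le\dG(f,\calH_k(Cr))$ fails for any $f\in\calH_k(Cr)$ with $\OPT_r(f)>0$.

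The width claim is also too optimistic. From $c_s\le 3w(T)$ and \Cref{eq:polytope-width-bound}, the approximator of \Cref{lem:exact-width-polytope} has geometric width only $O(r+\sqrt{\log(1/\Delta)})$, not $O(r)$. You would also need the reduction from general to exact width to preserve such a bound.

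Finally, the query-complexity step does not apply as stated. A distance estimator for the full class $\calH_k$ does not estimate the distance to a proper subclass such as $\calH_k(Cr)$.

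The missing idea is to restrict $\calH_k$ semantically rather than syntactically. Set $\eta:=\Delta/4$ and $k=\alpha(r,\eta)$. The paper tests the class $\calQ_{r,\eta,k}:=\{h\in\calH_k:\dG(h,\Conv(r))\le\eta\}$ with thresholds $\rho_1=\eps_1+\eta<\rho_2=\eps_2-\eta$.

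\textbf{Completeness.} If $g\in\Conv(r)$ is $\eps_1$-close to $f$, then \Cref{thm:polytope-approximation} gives $h\in\calH_k$ with $\dG(g,h)\le\eta$. Such an $h$ lies in $\calQ_{r,\eta,k}$ by definition, so no control of the approximator's width is needed.

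\textbf{Soundness.} Every $h\in\calQ_{r,\eta,k}$ satisfies $\dG(f,h)\ge\dG(f,\Conv(r))-\dG(h,\Conv(r))\ge\eps_2-\eta$ by the triangle inequality.

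This removes the $r$ versus $Cr$ mismatch entirely. The price is that $\calQ_{r,\eta,k}$ has no usable structure, so you need a tolerant tester valid for arbitrary classes of linear $k$-juntas of bounded Gaussian surface area. That is exactly \cite[Theorem~1.2]{DMN21}. Combined with Nazarov's $O(\sqrt{\log k})$ bound on the surface area of intersections of $k$ halfspaces \cite[Theorem~20]{KOS:08}, it gives the $k^{\poly(\log k,1/\Delta)}$ query bound.
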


Before giving the proof, we recall a tolerant testing theorem of De, Mossel, and Neeman~\cite{DMN21} for \emph{linear $k$-juntas}. 
Unlike in the agnostic learning corollaries, it is not enough to combine \Cref{eq:stitch} with a tolerant tester for the full class $\calH_k$ of intersections of $k$ halfspaces.
Indeed, a function may be close to an intersection of few halfspaces while still being far from $\Conv(r)$.
The key point is that the tolerant tester of~\cite{DMN21} applies not only to the full class of intersections of $k$ halfspaces, but to arbitrary subclasses of \emph{linear $k$-juntas} with bounded Gaussian surface area. 

Recall that a Boolean function $h:\R^n\to\zo$ is a \emph{linear $k$-junta} if there are vectors $u_1,\ldots,u_k\in\R^n$ and a Boolean function $g:\R^k\to\zo$ such that
\[
	h(x)=g(u_1\cdot x,\ldots,u_k\cdot x)\,.
\]
In particular, every intersection of at most $k$ halfspaces is a linear $k$-junta.
We use the following result of De, Mossel, and Neeman~\cite[Theorem~1.2]{DMN21}: if $\calC$ is any class of linear $k$-juntas, each of Gaussian surface area at most $s$, then for every $0\leq\rho_1<\rho_2<1$, there is a black-box query algorithm distinguishing $\dG(f,\calC)\leq \rho_1$ from $\dG(f,\calC)\geq \rho_2$ with query complexity $k^{\poly(s,1/(\rho_2-\rho_1))}$. 
We will apply this theorem to the subclass of $\calH_k$ consisting of those intersections of $k$ halfspaces that are themselves close to $\Conv(r)$.

\begin{proof}[Proof of~\Cref{cor:polytope-tolerant-testing}]
	Set parameters 
	\[
		\eta := \Delta/4
		\qquad\text{and}\qquad
		k := \alpha(r,\eta)\,.
	\]
	Let $\calH_k$ denote the class of intersections of at most $k$ halfspaces, and define the subclass
	\[
		\calQ_{r,\eta,k}
		:=
		\cbra{
			h\in\calH_k:
			\dG(h,\Conv(r))\leq \eta
		}\,.
	\]
	Note that this is a subclass of the class of linear $k$-juntas.
	Moreover, every $h\in\calQ_{r,\eta,k}$ is an intersection of at most $k$ halfspaces, and hence has Gaussian surface area at most $O(\sqrt{\log k})$ thanks to Nazarov's bound on the Gaussian surface area of intersections of $k$ halfspaces~\cite[Theorem~20]{KOS:08}. 
	Therefore we can apply \cite[Theorem~1.2]{DMN21} to $\calQ_{r,\eta,k}$ with surface-area parameter $s=O(\sqrt{\log k})$. 
	In particular, since $O(\sqrt{\log k})$ may be absorbed into $\poly(\log k)$, it gives a tolerant tester for $\calQ_{r,\eta,k}$ with query complexity
	\[
		k^{\poly(\log k,1/\Delta)}\,.
	\]
	We apply this tester with thresholds $\rho_1 := \eps_1+\eta$ and $\rho_2 := \eps_2-\eta$. 
	These thresholds are separated, since
	\[
		\rho_1
		=
		\eps_1+\frac{\Delta}{4}
		<
		\eps_2-\frac{\Delta}{4}
		=
		\rho_2\,.
	\]
	
	We now verify that this tester has the desired guarantee for $\Conv(r)$.
	First suppose that $\OPT_r(f)=\dG(f,\Conv(r))\leq \eps_1$. 
	Then there is some $g\in\Conv(r)$ such that $\dG(f,g)\leq \eps_1$.
	By \Cref{thm:polytope-approximation}, there is some $h\in\calH_k$ such that $\dG(g,h)\leq \eta$. 
	In particular, $h\in\calQ_{r,\eta,k}$.
	Therefore
	\[
		\dG(f,\calQ_{r,\eta,k})
		\leq
		\dG(f,h)
		\leq
		\dG(f,g)+\dG(g,h)
		\leq
		\eps_1+\eta
		=
		\rho_1\,.
	\]
	Thus the tester accepts in the completeness case. 
	Conversely, suppose $\OPT_r(f)=\dG(f,\Conv(r))\geq \eps_2$. 
	Let $h\in\calQ_{r,\eta,k}$.
	By definition of $\calQ_{r,\eta,k}$, we have $\dG(h,\Conv(r))\leq \eta$. 
	Hence, by the triangle inequality,
	\[
		\dG(f,h)
		\geq
		\dG(f,\Conv(r))-\dG(h,\Conv(r))
		\geq
		\eps_2-\eta
		=
		\rho_2\,.
	\]
	Taking the infimum over $h\in\calQ_{r,\eta,k}$, we get $\dG(f,\calQ_{r,\eta,k})
		\geq
		\rho_2$.
	Thus the tester rejects in the soundness case.
\end{proof}

%% file: sections/lower-bounds.tex
\section{Lower Bounds}
\label{sec:lbs}

In this section, we establish lower bounds complementing
\Cref{thm:GP-sparsification,thm:polytope-approximation}. 

\subsection{Gaussian Process Sparsification}
\label{subsec:lb-gp}

We first prove a lower bound for Gaussian process sparsification, showing that the sparsifier size in \Cref{thm:GP-sparsification} is optimal up to constant factors in the exponent. 
In fact, this lower bound holds even for sparsifiers with arbitrary real shifts, rather than only non-negative shifts. 

\widthonegplbthm*

The key step towards \Cref{thm:width-one-gp-lb} is the following lemma, which itself is a consequence of standard bounds on the measure of spherical caps. 

\begin{lemma}
\label{lem:continuous-sphere-lb}
	There are universal constants $c_0,\kappa>0$ such that the following holds
	for all sufficiently large $n$.  
	Let $u_1,\ldots,u_m\in \S^{n-1}$ and $a_1,\ldots,a_m\in\R$, and define
	\[
		h(g)
		:=
		\max_{1\leq j\leq m}
		\cbra{a_j+\frac{1}{\sqrt n} g\cdot u_j}\,.
	\]
	If
	\[
		\left\|
			\frac{\|\bg\|_2}{\sqrt n}-h(\bg)
		\right\|_{L^2}
		\leq 
		\frac{\kappa}{\sqrt n}\,,
	\]
	then $m\geq \exp(c_0 n)$.
\end{lemma}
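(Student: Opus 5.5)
The plan is to use the polar decomposition $\bg=\bR\,\btheta$ from \Cref{subsec:prelims-gaussian-rvs}, with $\bR\sim\chi(n)$ independent of $\btheta\sim\S^{n-1}$. The idea is that a sparse family of directions $u_j$ cannot ``see'' the radial coordinate at full strength for most $\btheta$. The target $\|\bg\|_2/\sqrt n=\bR/\sqrt n$ has slope exactly $1/\sqrt n$ in $\bR$ along every ray. Along a ray in a typical direction, however, $h$ can only grow in $\bR$ with slope at most $(1-\delta)/\sqrt n$. Since $\bR$ has order-one fluctuations, this slope deficit forces an $L^2$ error of order $\delta/\sqrt n$.

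\emph{Step 1 (good directions).} Fix $\delta=1/2$ and call $\theta\in\S^{n-1}$ good if $\theta\cdot u_j\leq 1/2$ for all $j$. By the standard spherical cap estimate $\Pr_{\btheta}[\btheta\cdot u\geq 1/2]\leq \exp(-n/8)$ and a union bound, if $m<\tfrac12\exp(n/8)$ then the good set has probability at least $1/2$. Taking $c_0$ smaller than $1/8$ and $n$ large, it therefore suffices to derive a contradiction under this assumption.

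\emph{Step 2 (one ray).} Fix a good $\theta$ and set $\varphi_\theta(R):=h(R\theta)=\max_j\{a_j+R\,\theta\cdot u_j/\sqrt n\}$. This is a maximum of affine functions of $R$ whose slopes are at most $1/(2\sqrt n)$. Hence $D_\theta(R):=R/\sqrt n-\varphi_\theta(R)$ satisfies $D_\theta(R_2)-D_\theta(R_1)\geq (R_2-R_1)/(2\sqrt n)$ for $R_1<R_2$. This holds because $\varphi_\theta$ is Lipschitz with constant $1/(2\sqrt n)$ on $[0,\infty)$, so no convexity subtlety arises.

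Now take the intervals $I_-:=[\sqrt n-2,\sqrt n-1]$ and $I_+:=[\sqrt n+1,\sqrt n+2]$. For $R_1\in I_-$ and $R_2\in I_+$ we have $D_\theta(R_2)-D_\theta(R_1)\geq 1/\sqrt n$. Consequently either $|D_\theta|\geq 1/(2\sqrt n)$ on all of $I_-$, or $|D_\theta|\geq 1/(2\sqrt n)$ on all of $I_+$. Indeed, if some point of $I_-$ has $|D_\theta|<1/(2\sqrt n)$, then every point of $I_+$ has $D_\theta>1/(2\sqrt n)$.

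\emph{Step 3 (radial anticoncentration and conclusion).} The step I expect to need the most care is the uniform lower bound $\Pr[\bR\in I_\pm]\geq c_1>0$ for all sufficiently large $n$. I would obtain it from the fact that $\bR-\sqrt n\to N(0,1/2)$ in distribution; this follows from the CLT for $\bR^2=\sum\bg_i^2$ together with the delta method. The two interval probabilities therefore converge to positive constants. Granting this, independence of $\bR$ and $\btheta$ gives
\[
\Ex\sbra{\pbra{\tfrac{\bR}{\sqrt n}-h(\bg)}^2}\geq \Prx[\btheta\text{ good}]\cdot c_1\cdot\frac{1}{4n}\geq \frac{c_1}{8n}.
\]
Choosing $\kappa^2<c_1/8$ contradicts the hypothesis, so $m\geq \tfrac12\exp(n/8)\geq\exp(c_0 n)$ for large $n$.
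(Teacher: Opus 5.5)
Your proof is correct and follows essentially the paper's route: polar decomposition, a slope deficit for $h$ along rays in directions where $\max_j \theta\cdot u_j$ is bounded away from $1$, and a spherical-cap union bound. The only real difference is the radial step, where the paper bounds $\Ex\sbra{e_\theta(\br)^2}\geq \Var\sbra{e_\theta(\br)}\geq \frac{1}{100}\Var\sbra{\bR/\sqrt n}$ using two independent copies and $\Var\sbra{\bR}\to 1/2$; your two-interval anticoncentration (from the CLT for $\bR$) instead lower-bounds the probability of an error of size $1/(2\sqrt n)$, so it would also cover the $L^1$ case of \Cref{rem:l1-sphere-lb} without the median argument.

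One small correction: $\varphi_\theta$ need not be $1/(2\sqrt n)$-Lipschitz, since slopes can be as low as $-1/\sqrt n$. You only use the one-sided bound $\varphi_\theta(R_2)-\varphi_\theta(R_1)\leq (R_2-R_1)/(2\sqrt n)$ for $R_1<R_2$, and that bound does hold for a maximum of affine functions with slopes at most $1/(2\sqrt n)$.
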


\begin{proof}
	Fix $\theta\in \S^{n-1}$ and restrict $h$ to the rescaled ray
	$\{\sqrt n r\theta:r\geq 0\}$.  Thus
	\[
		h_\theta(r)
		:=
		h(\sqrt n r\theta)
		=
		\max_{1\leq j\leq m}\cbra{a_j+r\,\theta\cdot u_j}\,.
	\]
	Let
	\[
		\rho(\theta):=\max_{1\leq j\leq m}\theta\cdot u_j
		\qquad\text{and}\qquad
		A:=\cbra{\theta\in \S^{n-1}:\rho(\theta)\leq \frac{9}{10}}\,.
	\]
	Note that for every $\theta\in A$, every affine function appearing in $h_\theta$ has slope at most $9/10$. 
	Hence, for all $r_1<r_2$,
	\[
		h_\theta(r_2)-h_\theta(r_1)
		\leq
		\frac{9}{10}(r_2-r_1)\,.
	\]
	Equivalently, if $e_\theta(r):=h_\theta(r)-r$, then
	\begin{equation}
	\label{eq:abs-e-theta-lb}
		e_\theta(r_2)-e_\theta(r_1)
		\leq
		-\frac{1}{10}(r_2-r_1)\,,
		\qquad\text{and therefore}\qquad 
		\abs{e_\theta(r_2)-e_\theta(r_1)}
		\geq
		\frac{1}{10}\abs{r_2-r_1}\,.
	\end{equation}

	Write $\bg\sim N(0, I_n)$ as $\bg=\bR\,\btheta$, where $\bR \sim \chi(n)$ and $\btheta\sim\S^{n-1}$ according to the Haar measure.  
	Let $\br:=\bR/\sqrt n$.  For each $\theta\in A$, using two
	independent copies $\br_1,\br_2$ of $\br$, we get
	\begin{align*}
		\Ex_{\br}\sbra{e_\theta(\br)^2}
		&\geq
		\Varx_{\br}\sbra{e_\theta(\br)} \\
		&=
		\frac{1}{2}
		\Ex_{\br_1,\br_2}
		\sbra{
			\abs{e_\theta(\br_1)-e_\theta(\br_2)}^2
		} \\
		&\geq
		\frac{1}{100}
		\Var\sbra{\frac{\bR}{\sqrt n}}\,,
	\end{align*}
	where the last step uses \Cref{eq:abs-e-theta-lb}.  
	Since $\bR\sim \chi(n)$, we have $\Var[\bR]\to 1/2$ as $n\to\infty$.
	Thus, for all sufficiently large $n$,
	\[
		\Var\sbra{\frac{\bR}{\sqrt n}}
		\geq
		\frac{1}{4n}\,,
		\qquad\text{and so for every}~\theta\in A,\qquad 
		\Ex_{\br}\sbra{e_\theta(\br)^2}
		\geq
		\frac{1}{400n}\,.
	\]
	
	Integrating over $\theta$ gives
	\begin{align*}
		\left\|
			\frac{\|\bg\|_2}{\sqrt n}-h(\bg)
		\right\|_{L^2}^2
		&=
		\Ex_{\btheta\sim\S^{n-1}}
		\sbra{
			\Ex_{\br}
			\sbra{
				\pbra{\br-h_{\btheta}(\br)}^2
			}
		} \\
		&=
		\Ex_{\btheta\sim\S^{n-1}}
		\sbra{
			\Ex_{\br}
			\sbra{e_{\btheta}(\br)^2}
		} \\
		&\geq
		\Prx_{\btheta\sim\S^{n-1}}\sbra{\btheta\in A}\cdot \frac{1}{400n}\,.
	\end{align*}
	If the $L^2$ error is at most $\kappa/\sqrt n$, then $\Pr_{\btheta}\sbra{\btheta\in A} \leq 400\kappa^2$. 
	Choosing $\kappa>0$ sufficiently small, we may assume that this probability is at most $1/2$. Hence
	\[
		\Prx_{\btheta\sim\S^{n-1}}
		\sbra{
			\rho(\btheta)>\frac{9}{10}
		}
		\geq
		\frac{1}{2}\,.
	\]

	On the other hand,
	\[
		\cbra{\theta:\rho(\theta)>\frac{9}{10}}
		\subseteq
		\bigcup_{j=1}^m
		\cbra{\theta:\theta\cdot u_j>\frac{9}{10}}\,.
	\]
	Each set on the right is a spherical cap cut off by the hyperplane $\theta\cdot u_j=9/10$. 
	By a standard spherical cap estimate \cite[Lemma~2.2]{ball1997elementary}, for every fixed $u\in\S^{n-1}$ and every fixed $a\in(0,1)$, the Haar measure of $\{\theta:\theta\cdot u>a\}$ is at most $\exp(-c(a)n)$; in particular, its Haar measure is at most $\exp(-c'n)$ for a universal constant $c'>0$.
	Therefore,
	\[
		\frac{1}{2}
		\leq
		m\exp(-c'n),
	\]
	which implies $m\geq \exp(c_0 n)$ for a universal constant $c_0>0$. 
\end{proof}

We now return to the proof of the main lower bound.

\begin{proof}[Proof of \Cref{thm:width-one-gp-lb}]
	The preceding lemma gives a lower bound in dimension $n$ at error scale
	$n^{-1/2}$.  
	We take $n \asymp \eps^{-2}$ and then rescale the corresponding sphere process to have Gaussian width exactly one. 

	Fix $0<\eps<\eps_0$ and set
	\[
		n=\floor{\pbra{\frac{\kappa}{\eps}}^2}\,,
	\]
	where $\eps_0>0$ is small enough that $n$ is above the threshold in \Cref{lem:continuous-sphere-lb}.  
	In particular, we have $n \geq \kappa^2/(2\eps^2)$. 
	Define 
	\[
		T_n:=\frac{1}{\sqrt n}\S^{n-1}\,,
		\qquad\text{and}\qquad 
		F_n(g):=\sup_{t\in T_n}g\cdot t
		=
		\frac{\|g\|_2}{\sqrt n}\,.
	\]
	Let $w_n := w(T_n)$, and it is readily checked (using Jensen's inequality) that $w_n \leq 1$. 
	Finally, define 
	\[
		T_\eps := \frac{1}{w_n} T_n\,,
		\qquad\text{and note that}\qquad 
		w(T_\eps) = 1\,,
		\quad 
		\sup_{t\in T_\eps} g\cdot t = \frac{1}{w_n} F_n(g)\,.
	\]

	Suppose $S \sse T_\eps$ and $h(g) := \max_{s\in S} {g\cdot s + b_s}$ is an $\eps$-approximator in $L^2(\gamma_n)$ to $\sup_{t\in T_\eps} g\cdot t$. 
	Multiplying by $w_n$ gives $h'(g) := w_n\,h(g)$. 
	Thus $h'$ has $|S|$ affine pieces with directions in $w_nS\subseteq T_n$ and additionally $\|h' - F_n\|_{L^2} \leq w_n\eps \leq \eps$. 
	Since $\eps \leq \kappa/\sqrt{n}$, it follows from \Cref{lem:continuous-sphere-lb} that 
	\[
		|S| \geq \exp(c_0n)\,.
	\]
Since $n \geq \kappa^2/(2\eps^2)$, this implies that $|S| \geq \exp\pbra{\Omega(1/\eps^2)}$ as desired. 
\end{proof}

\begin{remark}
\label{rem:l1-sphere-lb}
The preceding lower bound also holds for $L^1$-error approximators, after replacing the one-dimensional radial estimate in the proof of \Cref{lem:continuous-sphere-lb}.  
Indeed, fix $\theta\in A$ and write $\br=\bR/\sqrt n$.  
By \Cref{eq:abs-e-theta-lb}, the function $e_\theta(r)=h_\theta(r)-r$ is decreasing at rate at least $1/10$: for $r_1<r_2$,
\[
    e_\theta(r_1)-e_\theta(r_2)
    \geq
    \frac{1}{10}(r_2-r_1).
\]
Let $m_{\br}$ be a median of $\br$. 
Since $e_\theta$ is decreasing, $e_\theta(m_{\br})$ is a median of $e_\theta(\br)$. 
Using the fact that medians minimize $L^1$ distance, for every constant $q$ we have 
\[
    \Ex\sbra{\abs{e_\theta(\br)-q}}
    \geq
    \Ex\sbra{\abs{e_\theta(\br)-e_\theta(m_{\br})}}
    \geq
    \frac{1}{10}\Ex\sbra{\abs{\br-m_{\br}}}
    \gtrsim 
    \frac{1}{\sqrt n}\,,
\]
where the final inequality uses standard asymptotics for the $\chi(n)$ distribution.  
In particular, taking $q=0$ yields, for every $\theta\in A$,
\[
    \Ex_{\br}\sbra{\abs{\br-h_\theta(\br)}}
    =
    \Ex_{\br}\sbra{\abs{e_\theta(\br)}}
    \gtrsim
    \frac{1}{\sqrt n}.
\]
Integrating over $\theta$ gives
\[
    \left\|
        \frac{\|\bg\|_2}{\sqrt n}-h(\bg)
    \right\|_{L^1}
    \gtrsim
    \Prx_{\btheta\sim\S^{n-1}}[\btheta\in A]\cdot \frac{1}{\sqrt n}\,.
\]
Thus an $L^1$ error bound of order $\kappa/\sqrt n$, with $\kappa>0$ sufficiently small, forces $\Pr[\btheta\in A]\leq 1/2$.
Equivalently, $\Pr[\rho(\btheta) > 9/10]  \geq 1/2$. 
The same spherical-cap covering argument used above then gives $m\geq \exp(c_0 n)$. 
After the same rescaling with $n\asymp \eps^{-2}$, every $L^1$-approximator of the width-one sphere process with error at most $\eps$ requires $\exp(\Omega(1/\eps^2))$ pieces, even with arbitrary real shifts. 
Combined with the $L^2$ upper bound of \Cref{thm:GP-sparsification}, which is stronger than an $L^1$ bound, this gives the optimal $L^1$ sparsifier size $\exp(\Theta(1/\eps^2))$.
\end{remark}

\subsection{Polyhedral Approximation}
\label{subsec:polytope-lb}

We next show that the $n$-dimensional cube of Gaussian measure $1/2$ cannot be approximated, in Gaussian distance, by an intersection of $o(n)$ halfspaces. 

\cubepolytopelb*

As mentioned in \Cref{subsec:intro-apps-polytope}, this implies that the dependence on $r$ in \Cref{thm:polytope-approximation} must be at least $\exp(\Omega(r^2))$.  
Indeed, $C_n$ has geometric width $r_n$. Moreover, since $\gamma_n(C_n)=1/2$, we have $\gamma_1([-r_n,r_n])=2^{-1/n},$
and hence $Pr[|\bg_1|>r_n]=1-2^{-1/n}=\Theta(1/n)$.
By a standard univariate Gaussian tail bound~\cite[Proposition~2.1.2]{vershynin2018high}, this implies $r_n^2=\Theta(\log n)$.  
Thus \Cref{thm:gaussian-cube-polytope-lb} shows that even constant-error approximation requires $\Omega(n)=\exp(\Omega(r_n^2))$ halfspaces.

\subsubsection{Information-Theoretic Preliminaries}
\label{subsubsec:cube-lb-info-theory}

Our argument is information-theoretic, and we first briefly recall the standard notions we will use. 
If $\by$ is a random vector with density $f$, its differential entropy is
\[
	h(\by):=-\int f(y)\ln f(y)\,dy\,,
\]
whenever this quantity is well-defined.  
If $\nu\ll\mu$, the relative entropy (or Kullback--Leibler (KL) divergence) of $\nu$ with respect to $\mu$ is 
\[
	\dkl(\nu\|\mu):=\int \ln\pbra{\frac{d\nu}{d\mu}}\,d\nu\,.
\]
We will also use Pinsker's inequality in the form
\[
	\dtv(\nu,\mu)\leq \sqrt{\frac{1}{2} \dkl(\nu\|\mu)}
\]
where $\dtv$ is the statistical (or total-variation) distance between $\nu$ and $\mu$. 
Finally, we will require the following immediate consequence of the generalized entropy-power inequality of Zamir and Feder~\cite{ZamirFeder93}:

\begin{lemma}[Zamir--Feder entropy inequality]
\label{lem:zamir-feder-projection-entropy}
	Let $\bx=(\bx_1,\ldots,\bx_n)$ have i.i.d.~coordinates with finite
	differential entropy.  
	Let $M:\R^n\to\R^d$ have orthonormal rows, so
	$MM^\top =I_d$.  
	Then
	\[
		h(M\bx)\geq d\,h(\bx_1)\,.
	\]
\end{lemma}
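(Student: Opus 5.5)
The plan is to reduce to the Gaussian case via the Zamir--Feder generalized entropy-power inequality~\cite{ZamirFeder93}. We use it in the following form. Let $\bx=(\bx_1,\ldots,\bx_n)$ have independent coordinates with finite differential entropies. Let $\bx^*=(\bx^*_1,\ldots,\bx^*_n)$ have independent Gaussian coordinates matched in entropy, $h(\bx^*_i)=h(\bx_i)$. Then for every matrix $M$ with full row rank,
\[
	h(M\bx)\geq h(M\bx^*)\,.
\]
Equivalently, the entropy of a linear image of an independent vector is minimized, at fixed marginal entropies, by the matched Gaussian vector. The lemma is then a computation on the Gaussian side.

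First, choose the comparison Gaussian. Since the coordinates are i.i.d.\ with $h(\bx_1)$ finite, the equation $\tfrac12\ln(2\pi e\sigma^2)=h(\bx_1)$ has a unique solution $\sigma^2\in(0,\infty)$. Set $\bx^*\sim N(0,\sigma^2 I_n)$, so that $h(\bx^*_i)=h(\bx_1)$ for every $i$.

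Next, check that $M$ is admissible. The condition $MM^\top=I_d$ forces $M$ to have full row rank $d\leq n$. Applying the inequality above gives $h(M\bx)\geq h(M\bx^*)$.

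Finally, compute the Gaussian side. The vector $M\bx^*$ is Gaussian with covariance $\sigma^2 MM^\top=\sigma^2 I_d$. Hence
\[
	h(M\bx^*)=\frac d2\ln(2\pi e\sigma^2)=d\,h(\bx_1)\,,
\]
which is exactly the claimed bound.

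The only real obstacle is invoking the Zamir--Feder theorem in the correct form and under the right hypotheses. Their theorem is stated via entropy powers $N(\cdot)=\frac{1}{2\pi e}e^{2h(\cdot)/\dim}$, and the finite-entropy assumption is what makes $\sigma^2$ well defined. If one prefers to avoid the full matrix version, there is a fallback: complete $M$ to an orthogonal matrix $Q$, and use that rotations preserve differential entropy together with subadditivity of entropy. This alone, however, does not give a lower bound on a marginal. So citing Zamir--Feder directly, as above, is the intended route.
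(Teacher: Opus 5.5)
Your proposal is correct and follows the paper's proof essentially verbatim: you compare against the entropy-matched Gaussian $N(0,\sigma^2 I_n)$ via Zamir--Feder's Theorem~1, then use $MM^\top=I_d$ to get $M\bx^*\sim N(0,\sigma^2 I_d)$ and hence $h(M\bx^*)=d\,h(\bx_1)$. The closing remark about a rotation-plus-subadditivity fallback is unnecessary, and you rightly note it does not give the bound, so you can drop it.
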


\begin{proof}
	Let $\bg=(\bg_1,\ldots,\bg_n)$ be a Gaussian vector with independent
	coordinates chosen so that $h(\bg_i)=h(\bx_i)$ for every $i$. 
	By \cite[Theorem~1]{ZamirFeder93} we have 
	\[
		h(M\bx)\geq h(M\bg)\,.
	\]
	Since the coordinates of $\bx$ are i.i.d., the coordinates of $\bg$ have a
	common variance, say $\sigma^2$.  Therefore, using $MM^\top =I_d$, $M\bg\sim N(0,\sigma^2 I_d)$. 
	Hence $h(M\bg)=d\,h(\bg_1)=d\,h(\bx_1)$, which proves the claim.
\end{proof}

\subsubsection{Proof of~\Cref{thm:gaussian-cube-polytope-lb}}

The proof starts with the observation that an intersection of $m$ halfspaces depends only on the projection onto the span of its defining normals. 
Thus, if such a polytope approximates the cube, this low-dimensional projection must distinguish the ambient Gaussian from the Gaussian conditioned on the cube. 
However, for the parameters in \Cref{thm:gaussian-cube-polytope-lb}, the entropy inequality above shows that no such projection can distinguish the two distributions well.

\begin{proof}[Proof of~\Cref{thm:gaussian-cube-polytope-lb}]
	Write
	\[
		P=\bigcap_{j=1}^m\cbra{x\in\R^n:a_j\cdot x\leq b_j}\,.
	\]
	Let $V:=\mathrm{span}(a_1,\ldots,a_m)$ and note that $d:=\dim (V)\leq m$. 
	We will write $\pi_V: \R^n \to V$ for the orthogonal projection onto $V$. 
	Since every defining inequality of $P$ depends only on $\pi_V x$, there is a set $A\sse V$ such that $P=\pi_V^{-1}(A)$. 

	Suppose $\bg\sim N(0,I_n)$. 
	Let $\gamma_V$ be the law of the random variable $\pi_V\bg$, and let $\nu_V$ be the law of $\pi_V \bg$ conditioned on the event $\bg \in C_n$. 
	We first show that approximating $C_n$ by $P$ would distinguish $\nu_V$ from $\gamma_V$. 
	Note that $\gamma_n(P) = \gamma_V(A)$ and that 
	\[
		\gamma_n(P\cap C_n) = \Prx_{\bg\sim N(0,I_n)}\sbra{\pi_V\bg\in A, \bg\in C_n} = \frac{1}{2}\nu_V(A)
	\]
	where we used the fact that $\gamma_n(C_n) = \frac{1}{2}$. 
	Therefore
	\begin{align}
		\dG(P, C_n) 
		= \gamma_n(P\triangle C_n)
		&=
		\gamma_n(P)+\gamma_n(C_n)-2\gamma_n(P\cap C_n) \nonumber \\
		&=
		\gamma_V(A)+\frac{1}{2}-\nu_V(A) \nonumber \\
		&=
		\frac{1}{2}-\pbra{\nu_V(A)-\gamma_V(A)} \nonumber \\
		&\geq
		\frac{1}{2}-\dtv(\nu_V,\gamma_V)\,. \label{eq:potato}
	\end{align}
	The remainder of the argument will upper bound $\dtv(\nu_V,\gamma_V)$. 

	Set $q:=\gamma_1([-r_n,r_n])$. 
	The normalization $\gamma_n(C_n)=1/2$ gives
	\[
		q^n=\frac{1}{2}\,,
		\qquad\text{so}\qquad
		\ln\pbra{\frac{1}{q}} = \frac{\ln 2}{n}\,.
	\]
	Let $\mu$ be the one-dimensional Gaussian conditioned on $[-r_n,r_n]$:
	\[
		d\mu(x)=\frac{\Indicator_{[-r_n,r_n]}(x)}{q}\,d\gamma_1(x)\,.
	\]
	Then
	\[
		\dkl(\mu\|\gamma_1)=\ln\pbra{\frac{1}{q}} =\frac{\ln 2}{n}\,.
	\]
	If $\bx\sim\gamma_n(\cdot\mid C_n)$, then $\bx=(\bx_1,\ldots,\bx_n)\sim\mu^{\otimes n}$.
	By symmetry, $\Ex\sbra{\bx_i}=0$.  
	Writing $\tau:=\Ex\sbra{\bx_i^2}$, note that  $\Ex\sbra{\bx\bx^\top}=\tau I_n$. 

	Identify $V$ with $\R^d$, and write $\pi_V\bx=M\bx$, where
	$M:\R^n\to\R^d$ has orthonormal rows.  By
	\Cref{lem:zamir-feder-projection-entropy},
	\begin{equation} \label{eq:zf-app}
		h(M\bx)\geq d\,h(\mu)\,.
	\end{equation}
	We now bound the relative entropy of $\nu_V$ with respect to $\gamma_V$.
	Recalling that $\nu_V$ is the law of $M\bx$ and that $\gamma_V$ is the standard Gaussian $\gamma_d$ on $\R^d$, we have 
	\begin{align*}
		\dkl\pbra{\nu_V\|\gamma_V}
		&=
		\frac{1}{2}\Ex\sbra{\|M\bx\|_2^2}
		+\frac d2\ln(2\pi)
		-h(M\bx) \\
		&\leq
		\frac{d\tau}{2}
		+\frac d2\ln(2\pi)
		-d\,h(\mu) \tag{\Cref{eq:zf-app}} \\
		&=
		d\pbra{
			\frac{\tau}{2}
			+\frac{1}{2}\ln(2\pi)
			-h(\mu)
		} \\
		&=
		d\,\dkl(\mu\|\gamma_1)
		=
		\frac{d\ln 2}{n}
		\leq
		\frac{m\ln 2}{n}\,.
	\end{align*}
	Here we used
	\[
		\Ex\sbra{\|M\bx\|_2^2}
		=
		\operatorname{tr}\pbra{M\Ex\sbra{\bx\bx^\top}M^\top}
		=
		\tau\operatorname{tr}(MM^\top)
		=
		d\tau\,.
	\]
	By Pinsker's inequality, 
	\[
		\dtv(\nu_V,\gamma_V)
		\leq
		\sqrt{\frac{1}{2}\dkl(\nu_V\|\gamma_V)}
		\leq
		\sqrt{\frac{m\ln 2}{2n}}\,.
	\]
	Combining this with \Cref{eq:potato} yields
	\[
		\dG(P, C_n) 
		\geq
		\frac{1}{2}-\sqrt{\frac{m\ln 2}{2n}}\,,
	\]
	which, after rearranging, completes the proof of \Cref{thm:gaussian-cube-polytope-lb}. 
\end{proof}

%% file: sections/technical-lemmas.tex
\section{Technical Lemmas}

We record the elementary analytic facts used above.
Throughout this appendix, absolutely continuous means absolutely continuous as a function of the parameter on $[0,1]$.

\begin{lemma}[Bounded continuous vector fields]
\label{lem:bounded-ode}
	Let $F:[0,1]\times\R^N\to\R^N$ be continuous and bounded, where $\R^N$ is equipped with the Euclidean norm.
	For every initial value $\bar{a}_0\in\R^N$, the initial-value problem
	\[
		\bar{a}'(\theta)
		=
		F(\theta,\bar{a}(\theta))\,,
		\qquad
		\bar{a}(0)=\bar{a}_0
	\]
	has an absolutely continuous solution on $[0,1]$.
	Moreover, if $\|F\|_\infty\leq  M$, then every solution satisfies
	\[
		\|\bar{a}(\theta)-\bar{a}(\theta')\|_2
		\leq 
		M|\theta-\theta'|
	\]
	for all $\theta,\theta'\in[0,1]$, and hence $\|\bar{a}'(\theta)\|_2\leq  M$ for a.e.~$\theta\in[0,1]$.
\end{lemma}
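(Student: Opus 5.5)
We will derive existence from Peano's theorem and the a priori Lipschitz bound from the fundamental theorem of calculus for absolutely continuous functions.

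\textbf{Step 1: local existence.} Since $F$ is continuous on $[0,1]\times\R^N$, it is continuous on every compact cylinder $[0,1]\times \overline{B}(\bar a_0,R)$. There it is bounded by $M$. Peano's existence theorem (as in \cite[Theorem~2.19]{Tes12}) then gives a $C^1$ solution on $[0,\delta]$ with $\delta=\min\{1,R/M\}$.

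\textbf{Step 2: global existence.} Because $\|F\|_\infty\le M$ holds globally, we may take $R=M$, which gives $\delta=1$ directly. Concretely, on the cylinder $[0,1]\times\overline{B}(\bar a_0,M)$ the bound $M$ applies. The Peano existence time $\min\{1,R/M\}$ therefore equals $1$, and the solution exists on all of $[0,1]$ and stays in the ball.

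Alternatively, one can argue by continuation. Take a maximal solution on $[0,\theta^*)$. Any solution satisfies $\|\bar a'\|\le M$, so $\bar a$ is uniformly Lipschitz and extends continuously to $\theta^*$. Peano applied at $(\theta^*,\bar a(\theta^*))$ then extends it further, contradicting maximality unless $\theta^*=1$.

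Either way the solution is $C^1$, hence absolutely continuous. If one wants to avoid quoting Peano, one could instead build Euler polygons $\bar a_k$ on $[0,1]$. These are uniformly $M$-Lipschitz and start at $\bar a_0$, so Arzel\`a--Ascoli gives a uniformly convergent subsequence. Passing to the limit in the integral equation $\bar a(\theta)=\bar a_0+\int_0^\theta F(s,\bar a(s))\,ds$ uses uniform continuity of $F$ on the compact set $[0,1]\times\overline B(\bar a_0,M)$.

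\textbf{Step 3: Lipschitz bound for every solution.} Let $\bar a$ be any absolutely continuous solution, meaning the ODE holds a.e. Then $\bar a(\theta)-\bar a(\theta')=\int_{\theta'}^{\theta}F(s,\bar a(s))\,ds$. Minkowski's integral inequality for the Euclidean norm gives $\|\bar a(\theta)-\bar a(\theta')\|_2\le M|\theta-\theta'|$. Moreover, $\bar a'(\theta)=F(\theta,\bar a(\theta))$ for a.e. $\theta$, so $\|\bar a'(\theta)\|_2\le M$ a.e.

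The only step needing care is Step 2, but with a global bound on $F$ it is immediate: the existence interval from Peano on the cylinder of radius $M$ already covers $[0,1]$.
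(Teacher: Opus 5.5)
Your proof is correct and follows essentially the same route as the paper: Peano's theorem for existence, the global bound $M$ to get a solution on all of $[0,1]$, and integrating the equation via absolute continuity for the Lipschitz estimate. The only difference is cosmetic: your primary Step 2 reads off global existence directly from Peano's quantitative existence time $\min\{1,R/M\}$ on the cylinder of radius $R=M$ (reading $R/M$ as $\infty$ when $M=0$, where $F\equiv 0$ anyway). The paper instead uses the no-blowup continuation argument that you give as your alternative.
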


\begin{proof}
	Peano's existence theorem~\cite[Theorem 2.19]{Tes12} gives a local absolutely continuous solution.
	Since the vector field is bounded by $M$, every local solution is $M$-Lipschitz.
	In particular, a solution cannot run off to infinity in finite time, so the local solution extends to all of $[0,1]$.

	The Lipschitz estimate follows directly by integrating the differential equation.
	Indeed, for any solution and any $\theta,\theta'\in[0,1]$,
	\[
		\|\bar{a}(\theta)-\bar{a}(\theta')\|_2
		\leq 
		\int_{\min\{\theta,\theta'\}}^{\max\{\theta,\theta'\}}
		\|\bar{a}'(s)\|_2\,ds
		\leq 
		M|\theta-\theta'|\,.
	\]
	The a.e.~bound $\|\bar{a}'(\theta)\|_2\leq  M$ follows from $\bar{a}'(\theta)=F(\theta,\bar{a}(\theta))$ for a.e.~$\theta$.
\end{proof}

The next lemma justifies differentiating the softmax interpolation as an $L^2$-valued path in the proof of \Cref{thm:GP-sparsification}. 

\begin{lemma}[Softmax paths as $L^2$-valued absolutely continuous curves]
\label{lem:l2-chain}
	Let $T$ be finite.
	Suppose $v_t:[0,1]\to\R^n$ and $a_t:[0,1]\to\R$ are absolutely continuous and that, for constants $L,M<\infty$,
	\[
		\|v_t'(\theta)\|_2\leq  L\,,
		\qquad
		|a_t'(\theta)|\leq  M
	\]
	for every $t\in T$ and for a.e.~$\theta\in[0,1]$.
	Fix $\beta>0$, and set
	\[
		H_\theta(g)
		=
		\frac{1}{\beta}
		\log\pbra{
		\sum_{t\in T}
		\exp\pbra{\beta\pbra{g\cdot v_t(\theta)+a_t(\theta)}}
		}\,.
	\]
	Then $\theta\mapsto H_\theta$ is absolutely continuous as a map from $[0,1]$ into $L^2$.
	Moreover, for a.e.~$\theta$, the $L^2$-derivative is represented by
	\[
		\frac{d}{d\theta}H_\theta(g)
		=
		\sum_{t\in T}
		\omega_t^\theta(g)
		\pbra{
		g\cdot v_t'(\theta)
		+
		a_t'(\theta)
		}\,,
	\]
	where
	\[
		\omega_t^\theta(g)
		:=
		\frac{
		\exp\left(\beta(g\cdot v_t(\theta)+a_t(\theta))\right)
		}{
		\sum_{t'\in T}
		\exp\left(\beta(g\cdot v_{t'}(\theta)+a_{t'}(\theta))\right)
		}\,.
	\]
\end{lemma}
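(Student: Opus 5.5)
We will prove that $\theta\mapsto H_\theta$ is Lipschitz into $L^2$ and identify its derivative, by differentiating pointwise in $g$ and then using dominated convergence. Fix $g\in\R^n$. The map $\Phi_g(x):=\frac1\beta\log\sum_{t\in T}e^{\beta x_t}$ on $\R^{|T|}$ is smooth, its gradient is the softmax weight vector (nonnegative, summing to one), and it is therefore $1$-Lipschitz with respect to $\|\cdot\|_\infty$. We have $H_\theta(g)=\Phi_g(x(\theta))$ with $x_t(\theta):=g\cdot v_t(\theta)+a_t(\theta)$. Each $x_t$ is absolutely continuous, and $|x_t'(\theta)|\le L\|g\|_2+M$ for a.e.\ $\theta$. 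Composing a $C^1$ function with an absolutely continuous curve gives an absolutely continuous function. So for each $g$ the map $\theta\mapsto H_\theta(g)$ is absolutely continuous, and its derivative is $\sum_t\omega_t^\theta(g)\,x_t'(\theta)$ wherever all the $x_t$ are differentiable. This gives the stated formula pointwise.

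The key estimate is then immediate. For $\theta<\theta'$ we have
\[
|H_{\theta'}(g)-H_\theta(g)|\le \max_t|x_t(\theta')-x_t(\theta)|\le (L\|g\|_2+M)|\theta'-\theta|.
\]
Since $\|g\|_2\in L^2(\gamma_n)$, this yields $\|H_{\theta'}-H_\theta\|_{L^2}\le (L\sqrt n+M)|\theta'-\theta|$. Hence the curve is Lipschitz, and in particular absolutely continuous, as a map into $L^2$. We should also check that $H_\theta\in L^2$: $|H_\theta(g)|\le \max_t|x_t(\theta)|+\log|T|/\beta$, which is at most a linear function of $\|g\|_2$.

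The main obstacle is identifying the $L^2$-derivative with the pointwise derivative for a.e.\ $\theta$. First, a subtlety about the null set: the exceptional set of $\theta$ where some $v_t$ or $a_t$ fails to be differentiable does not depend on $g$. This holds because $x_t'(\theta)=g\cdot v_t'(\theta)+a_t'(\theta)$ exists for all $g$ as soon as $v_t$ and $a_t$ are differentiable at $\theta$, and $T$ is finite. So there is a single full-measure set $\Theta\subseteq[0,1]$ on which the pointwise derivative $D_\theta(g):=\sum_t\omega_t^\theta(g)(g\cdot v_t'(\theta)+a_t'(\theta))$ exists for every $g$.

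Now fix $\theta\in\Theta$. The difference quotients $(H_{\theta+h}(g)-H_\theta(g))/h$ converge pointwise to $D_\theta(g)$ as $h\to0$. By the Lipschitz bound above, these quotients are dominated by $L\|g\|_2+M$, which is square-integrable. Dominated convergence then gives convergence of the quotients to $D_\theta$ in $L^2$, so the $L^2$-derivative exists at every $\theta\in\Theta$ and equals $D_\theta$.

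Finally, because $L^2$ is a Hilbert space and the curve is Lipschitz, the Bochner fundamental theorem of calculus $H_1-H_0=\int_0^1 D_\theta\,d\theta$ holds. This is exactly what is used in \Cref{eq:main-proof-FTC}.
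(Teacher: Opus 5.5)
Your proposal is correct and follows essentially the same route as the paper: the pointwise chain rule for the smooth log-sum-exp, domination of the difference quotients by $L\|g\|_2+M\in L^2(\gamma_n)$, and dominated convergence to identify the $L^2$-derivative and obtain the fundamental theorem of calculus. Your version makes explicit several points the paper leaves implicit: the $\ell_\infty$-Lipschitz bound that yields a Lipschitz curve in $L^2$, the $g$-independence of the exceptional null set, and the Bochner fundamental theorem of calculus for Lipschitz Hilbert-valued curves.
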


\begin{proof}
	For each fixed $g$, the derivative formula follows from the ordinary chain rule for absolutely continuous functions, since log-sum-exp is smooth.
	Moreover, because the $\omega_t^\theta(g)$'s are non-negative and sum to one,
	\[
		\left|
		\frac{d}{d\theta}H_\theta(g)
		\right|
		\leq 
		\sum_{t\in T}
		\omega_t^\theta(g)
		\pbra{|g\cdot v_t'(\theta)|+|a_t'(\theta)|}
		\leq 
		L\|g\|_2+M\,.
	\]
	The bound $L\|g\|_2+M$ is in $L^2(\gamma_n)$ and is independent of $\theta$. 
	Therefore the difference quotients are dominated in $L^2$, the pointwise derivatives are Bochner integrable, and dominated convergence allows us to pass from pointwise absolute continuity to absolute continuity as an $L^2$-valued curve. 
	Thus, for every $0\leq \theta_1\leq \theta_2\leq 1$,
	\[
		H_{\theta_2}-H_{\theta_1}
		=
		\int_{\theta_1}^{\theta_2}
		\frac{d}{d\theta}H_\theta\,d\theta\,.
	\]
	This proves both absolute continuity and the claimed derivative formula. 
\end{proof}